\DeclareSymbolFontAlphabet{\mathbb}{AMSb}
\DeclareSymbolFontAlphabet{\mathbbl}{bbold}
\newenvironment{claimproof}[1][Proof of Claim]{\begin{proof}[#1]}{\end{proof}}
\newtheorem*{lemma*}{Lemma}
\newtheorem*{theorem*}{Theorem}
\newtheorem*{proposition*}{Proposition}
\newtheorem*{fact*}{Fact}
\newtheorem*{notation*}{Notation}
\newtheorem*{conventions*}{Conventions}
\newtheorem*{remark*}{Remark}
\newtheorem*{corollary*}{Corollary}
\newtheorem*{conjecture*}{Conjecture}
\newtheorem*{problem*}{Problem}
\newtheorem*{question*}{Question}
\newtheorem{assumption*}{Assumption}
\theoremstyle{remark}
\newtheorem*{claim*}{Claim}
\numberwithin{equation}{section}
\newcommand{\N}{\mathbb{N}}
\newcommand{\Q}{\mathbb{Q}}
\newcommand{\bs}{\backslash}
\newcommand{\ceil}[1]{\left \lceil #1 \right \rceil }
\newcommand{\floor}[1]{\left \lfloor #1 \right \rfloor}
\newcommand{\meet}{\wedge}
\newcommand{\join}{\vee}
\renewcommand\AA{{\mathcal A}}
\newcommand\KK{{\mathcal K}}
\newcommand\bbzero{\mathbbl 0} 
\newcommand\bbone{\mathbbl 1}  
\newcommand\includedin{\subseteq}
\newcommand\union{\cup}
\newcommand\fraisse{Fra\"\i ss\'e }
\newcommand{\arr}{\overset {E}{\rightarrow}}
\newcommand{\ra}[1]{\overset {#1}{\rightarrow}}
\newcommand{\la}[1]{\overset {#1}{\leftarrow}}
\newcommand{\To}{{\Rightarrow}}
\newcommand{\From}{{\Leftarrow}}
\newcommand{\opp}{\mathrm{opp}}
\newcommand{\red}{\mathrm{red}}
\title{Homogeneous 3-dimensional permutation structures}
\author{Samuel Braunfeld\\
\small Department of Mathematics\\[-0.8ex]
\small Rutgers University\\[-0.8ex] 
\small New Jersey, U.S.A.\\
\small\tt swb52@math.rutgers.edu\\}
\begin{document}

\maketitle


\begin{abstract}
We provide a classification of the homogeneous 3-dimensional permutation structures, i.e. homogeneous structures in a language of 3 linear orders, partially answering a question of Cameron [\textit{Homogeneous permutations}, Electronic Journal of Combinatorics, 2002]. We also arrive at a natural description of all known homogeneous finite-dimensional permutation structures by modifying the language used in the construction from \textit{The Lattice of Definable Equivalence Relation in Homogeneous $n$-Dimensional Permutation Structures} [Samuel Braunfeld, Electronic Journal of Combinatorics, 2016], completing the catalog begun there.
\end{abstract}

\section{Introduction}
   \newtheorem*{lemma:trianglereduce}{\bf{Lemma \ref{lemma:trianglereduce}}}

In \cite{Cameron}, Cameron classified the homogeneous permutations, which he identified with homogeneous structures consisting of two linear orders. He then posed the problem of classifying the homogeneous structures consisting of $n$ linear orders for any $n$ \cite[\S 6]{Cameron}, which we call \textit{$n$-dimensional permutation structures}. The first step toward such a classification is to produce a catalog of examples occurring ``in nature'', undertaken in \cite{Braun}, which introduced a construction for producing many new imprimitive examples. However, the construction did not quite capture all examples that were known at the time (see Examples 1,2 in \S 2 below).

While working in \cite{RamExp} on the structural Ramsey property for the structures from \cite{Braun}, it became apparent that rather than working with linear orders, the proper language decomposes linear orders that are $E$-convex, i.e. such that $E$-classes are convex with respect to the order, for some $\emptyset$-definable equivalence relation $E$ into an order within $E$-classes and an order on the quotient; we call these pieces of orders \textit{subquotient orders}.

\begin{definition}
Let $X$ be a structure, and $E \leq F$ equivalence relations on $X$. A \textit{subquotient-order from $E$ to $F$} is a partial order on $X/E$ in which two $E$-classes are comparable iff they lie in the same $F$-class (note, this pulls back to a partial order on $X$). Thus, this partial order provides a linear order of $C/E$ for each $C \in X/F$. We call $E$ the \textit{bottom relation} and $F$ the \textit{top relation} of the subquotient-order. 
\end{definition}

When the construction from \cite{Braun} is carried out with subquotient orders rather than linear orders, it produces all known examples of homogeneous finite-dimensional permutation structures. The following question asks whether the list produced by this modified construction is complete, using terminology introduced in \cite{RamExp} and reviewed in \S 2.

\begin{question} \label{que:sqoconj}
   Is every homogeneous finite-dimensional permutation structure with lattice of $\emptyset$-definable equivalence relations isomorphic to $\Lambda$ interdefinable with the \fraisse limit of some well-equipped lift of the class of all finite $\Lambda$-ultrametric spaces, for some distributive lattice $\Lambda$?
\end{question}
   
The following classification in the primitive case was conjectured in \cite{Braun}. We show in Proposition \ref{prop:sqoprim} that this would follow from a positive answer to the above question.

\begin{conjecture}[Primitivity Conjecture, \cite{Braun}]
Every primitive homogeneous finite dimensional permutation structure can be constructed by the following procedure.
\begin{enumerate}
\item Identify certain orders, up to reversal.
\item Take the \fraisse limit of the resulting amalgamation class, getting a fully generic structure, possibly in a simpler language.
\end{enumerate}
\end{conjecture}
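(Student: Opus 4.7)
The plan is to prove the Primitivity Conjecture \emph{conditionally}, by deducing it from a positive answer to Question \ref{que:sqoconj}; this is precisely the content that Proposition \ref{prop:sqoprim} will establish.

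First, I would observe that if $M$ is a primitive homogeneous finite-dimensional permutation structure, then its lattice $\Lambda$ of $\emptyset$-definable equivalence relations consists of just equality and the full relation, which is trivially distributive. This places us in the setting of Question \ref{que:sqoconj}, and assuming a positive answer, $M$ is interdefinable with the \fraisse limit of some well-equipped lift of the class of finite $\Lambda$-ultrametric spaces.

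Second, I would unpack this construction under the trivial $\Lambda$. A $\Lambda$-ultrametric space reduces to a pure finite set (with all pairs carrying the unique nontrivial distance), and any subquotient-order for $\Lambda$ must have equality as bottom and the full relation as top, and is therefore simply a linear order on the whole structure. Consequently, a well-equipped lift in this case amounts to enriching finite sets by finitely many linear orders, together with the identifications encoded in the ``well-equipped'' data. By the definitions given in \cite{RamExp}, in the absence of nontrivial equivalence relations the only identifications available equate pairs of orders, either directly or after reversal.

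Third, I would match this to the two-step procedure in the conjecture. Step (1), ``identify certain orders, up to reversal,'' corresponds exactly to the identifications forced by the well-equipped data. Step (2), taking the \fraisse limit of the resulting amalgamation class, matches the \fraisse limit appearing in Question \ref{que:sqoconj} directly, with the ``possibly in a simpler language'' hedge reflecting the interdefinability built into that question.

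The main obstacle is, of course, establishing Question \ref{que:sqoconj} itself, which is an open classification problem well beyond the scope of a conditional reduction. Granting that, the remaining work is essentially bookkeeping, but is delicate at the step of confirming that the ``well-equipped'' data in the trivial-$\Lambda$ case genuinely collapses to identification of orders up to reversal, with no additional constraints on the family of linear orders silently smuggled in by the well-equipped axioms.
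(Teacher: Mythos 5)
Your overall strategy --- deducing the Primitivity Conjecture from a positive answer to Question \ref{que:sqoconj} by noting that a primitive structure has trivial lattice $\Lambda$, so that the well-equipped lift degenerates to finite sets with finitely many generic linear orders --- is exactly the paper's route. However, there is a genuine gap at the final step, and you have misdiagnosed where the delicacy lies. In the trivial-lattice case the well-equipped data contains \emph{no} identifications at all: the language just consists of $m$ subquotient orders from $\bbzero$ to $\bbone$, i.e.\ $m$ independent generic linear orders, and the Fra\"\i ss\'e limit is the fully generic $\Gamma^{(g)}_m$. So the "identification of orders up to reversal" in the conjecture cannot be read off from the well-equipped axioms, as your second and third steps assert. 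It has to come from the interdefinability clause: the given orders $<_1,\ldots,<_n$ of $\Gamma$ are merely quantifier-free \emph{definable} linear orders on $\Gamma^{(g)}_m$, i.e.\ transitive unions of $2$-types, and a priori such a union need not be one of the $m$ generic orders or a reversal (for $m=3$ there are $16$ candidate unions containing one $2$-type from each opposite pair, of which only $6$ are of the form $\pm<_j$; the "majority" order is a typical spurious candidate).

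The missing ingredient is precisely Proposition \ref{prop:sqoprim}: every definable linear order on the generic $m$-dimensional permutation structure equals some $<_j$ or $<_j^{opp}$. Its proof is not bookkeeping --- one assumes a putative new order $<$ disagrees with every $<_i$ on some $2$-type, builds a chain $x_1,\ldots,x_{n+1}$ realizing those $2$-types consecutively, checks that each $<_i$ can be completed so that $x_{n+1} <_i x_1$, and concludes that $<$ fails transitivity on a genuine substructure of $\Gamma^{(g)}_m$. Without this step, interdefinability with $\Gamma^{(g)}_m$ does not yield the conjecture's two-step procedure, so your reduction is incomplete as stated. With it, your argument matches the paper's.
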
 


For example, the primitive permutations in Cameron's classification are obtained by identifying the two orders to get $(\Q, <, <)$, identifying the second order as the reversal of the first to get $(\Q, <, >)$, or making no identifications to get the \fraisse limit of all finite permutations.

We next extract a consequence of the Primitivity Conjecture. Lemma \ref{lemma:trianglereduce} proves this subconjecture for $k=3$, and it seems it should be tractable for several further small values of $k$ via the methods used there.

\begin{conjecture}
Let $\Gamma$ be a homogeneous $k$-dimensional permutation structure realizing all 3-types. Then $\Gamma$ is fully generic.
\end{conjecture}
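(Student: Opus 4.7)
The plan is to proceed by induction on the size $n$ of a finite $k$-dimensional permutation structure $P$, aiming to show every such $P$ embeds into $\Gamma$; equivalently, that the age of $\Gamma$ is the full class of finite $k$-dimensional permutation structures, i.e., $\Gamma$ is fully generic. The hypothesis that all 3-types are realized handles the base cases $n \leq 3$, so fix $n \geq 4$ and assume inductively that every $k$-dimensional permutation structure of size less than $n$ embeds into $\Gamma$. Let $P = \{p_1, \ldots, p_n\}$.

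By induction, embed $P \setminus \{p_n\}$ into $\Gamma$ via some map $\iota$, and let $\tau$ be the 1-type of $p_n$ over $\iota(P \setminus \{p_n\})$; the goal is to find $\bar p_n \in \Gamma$ realizing $\tau$. For each $j < n$, applying the inductive hypothesis to the $(n-1)$-point structure $P \setminus \{p_j\}$ and then using homogeneity of $\Gamma$ to align the resulting embedding with $\iota$ on their common $(n-2)$-point part yields a \emph{partial witness} $q_j \in \Gamma$ realizing the restriction of $\tau$ to $\iota(P \setminus \{p_j, p_n\})$: that is, $q_j$ has the prescribed 2-type with every $\iota(p_l)$ for $l \neq j, n$, though possibly the wrong 2-type with $\iota(p_j)$. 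The induction thus reduces to combining these partial witnesses into a single realization of $\tau$.

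This combination step is the main obstacle and is essentially a strong amalgamation property, asking that the 2-type of the target point with $\iota(p_j)$ be independently controllable over its 2-types with the remaining $\iota(p_l)$. A natural attack is iterative: starting from $q_1$, amalgamate it within the age against $q_2$ over a suitable $(n-1)$-point common substructure, aiming to produce a new partial witness that is correct on strictly more coordinates, and then repeat. The subtle point is that the age is only guaranteed to contain \emph{some} amalgam rather than the one we want, so a single 2-amalgamation may deliver the wrong 2-type on the free pair and fail to improve the witness. For $k=3$, Lemma \ref{lemma:trianglereduce} overcomes this via a finite case analysis exploiting the limited interaction patterns of three linear orders; for general $k$ the number of configurations and of potentially obstructing 3-types grows, and the central difficulty is identifying a uniform principle --- perhaps routing through the structural machinery of Question \ref{que:sqoconj}, using $\emptyset$-definable equivalence relations and the subquotient-order language --- that guarantees the required amalgams lie in the age rather than merely in the ambient class of all finite $k$-dimensional permutation structures.
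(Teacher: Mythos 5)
The statement you are addressing is stated in the paper as a conjecture: the paper itself establishes it only for $k=3$ (Lemma \ref{lemma:trianglereduce}), and your proposal likewise stops short of a proof. The gap is the one you name yourself: the ``combination step'' that merges the partial witnesses $q_1,\dots,q_{n-1}$ into a single realization of $\tau$ is exactly the hard part, and nothing in your outline supplies it. Iterated two-point amalgamation only guarantees that \emph{some} completion of each diagram lies in the age; since the 2-type of the free pair can come out wrong, there is no monovariant forcing your iteration to terminate with a witness correct on all coordinates. As written, the argument reduces the conjecture to an equivalent strong-amalgamation-type statement rather than proving it.

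The idea your outline is missing --- and the one the paper uses where it can --- is to engineer the amalgamation diagrams so that their completion is \emph{unique}, rather than trying to steer a non-unique amalgam. In Lemma \ref{lemma:4gen} one takes a target configuration $A$ on $N$ points, finds a \emph{separated pairing} (a set of $\ell=\lfloor N/2\rfloor$ disjoint pairs such that for each order some pair is non-adjacent in that order), and adds at most $\ell-1$ auxiliary points so that every chosen pair becomes non-adjacent in \emph{every} order. Deleting the two points of such a pair yields two factors over a base that contains, for each $i$, a point $<_i$-between the two deleted points; transitivity then pins down the 2-type of the free pair, so the unique amalgam of the two $(N-1)$-point factors is the extended configuration, which contains $A$. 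A counting argument shows separated pairings exist whenever $\frac{n!}{(n-\ell)!}>2^{\ell}k$, so only boundedly many small configurations need separate treatment; for $k=3$ this leaves only the $4$-point configurations, which Lemma \ref{lemma:trianglereduce} handles by an explicit construction. Even with this device the general-$k$ case remains open, precisely because the residual small configurations are not controlled --- so your closing paragraph correctly identifies where the difficulty lives, but the proposal should not be read as a proof of the conjecture.
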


The main result of the present paper is the following classification of the homogeneous 3-dimensional permutation structures, which gives a positive answer to Question \ref{que:sqoconj} in this case.

\begin{definition}
Given structures $\Gamma_1, \Gamma_2$, the \textit{composition of $\Gamma_1$ with $\Gamma_2$}, denoted  $\Gamma_1[\Gamma_2]$, is the structure obtained by expanding $\Gamma_1$ with an equivalence relation $E$, and replacing the points of $\Gamma_1$ by $E$-classes that are copies of $\Gamma_2$.
\end{definition}

\begin{theorem}[The Catalog] \label{thm:catalog}
Let $(\Gamma, <_1, <_2, <_3)$ be a homogeneous 3-dimensional permutation structure. We use $\Gamma^{(g)}_i$ to denote the generic $i$-dimensional permutation structure; in particular $\Gamma^{(g)}_0$ is a set equipped only with equality. Then $\Gamma$ is quantifier-free interdefinable with one of the following 16 structures.

\begin{enumerate}
\item $\Gamma$ has no non-trivial $\emptyset$-definable congruence
	\begin{enumerate}
	\item $\Gamma$ is primitive: $\Gamma = \Gamma^{(g)}_1, \Gamma^{(g)}_2,$ or $\Gamma^{(g)}_3$.
	\item $\Gamma$ is imprimitive: $\Gamma$ is the expansion  by a generic linear order of $\Gamma^{(g)}_1[\Gamma^{(g)}_j]$, for $j \in \set{0, 1}$.
	\end{enumerate}
\item $\Gamma$ has a non-trivial $\emptyset$-definable congruence
	\begin{enumerate}
	\item $\Gamma$ is a repeated composition of primitive structures: For any multisubset $I \subset \set{1,2}$ such that $|I|>1$ and $\sum_{i \in I} 2^i \leq 8$, $\Gamma$ is the composition in any order of $\Gamma^{(g)}_i$ for $i \in I$.
	\item $\Gamma$ is a composition of primitive and imprimitive structures: Let $\Gamma^*$ denote the structure from $(1b)$ with $j=0$. Then $\Gamma = \Gamma^*[\Gamma^{(g)}_1]$ or $\Gamma^{(g)}_1[\Gamma^*]$.
	\end{enumerate}
\end{enumerate}

\end{theorem}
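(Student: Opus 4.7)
The plan is to stratify $\Gamma$ by the lattice $\Lambda$ of its $\emptyset$-definable equivalence relations and decompose each $<_i$ into subquotient orders relative to $\Lambda$, in the sense of \cite{RamExp}. I would first show that $\Lambda$ is distributive and, using the three-order hypothesis, a chain of length at most $4$: any incomparable pair of congruences would require more independent orders to witness than we have available. Each $<_i$ then splits into a tuple of subquotient orders, one between each consecutive pair of elements of $\Lambda$, so that the whole structure is described by the data of $\Lambda$ together with the subquotient orders assigned to each level.

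I would then case-split on $|\Lambda|$. Each \emph{primitive slice} (successive quotient in the chain) carries a primitive homogeneous permutation structure of dimension at most $3$. By Lemma \ref{lemma:trianglereduce}, the $k=3$ case of the subconjecture, combined with Cameron's classification of primitive $2$-dimensional permutation structures \cite{Cameron}, each such slice is pinned down as one of $\Gamma^{(g)}_1, \Gamma^{(g)}_2, \Gamma^{(g)}_3$, determined by the number of interdefinabilities (up to reversal) among the descended orders and whether all $3$-types are realized on the slice. When $|\Lambda| = 2$ the entire structure is a single such slice, yielding the three entries of (1a); the two entries of (1b) cover the case in which a composition's natural congruence fails to be $\emptyset$-definable after adding an auxiliary cross-cutting generic order, and would be verified by a separate Fra\"{i}ss\'{e} amalgamation check.

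For $|\Lambda| \in \{3,4\}$ the remaining task is to enumerate the admissible distributions of primitive slice types across the chain levels, subject to the constraint that the resulting $\Lambda$-ultrametric space admits a well-equipped lift to three linear orders; this accounts for the arithmetic condition in (2a) and yields the compositions of (2a) and (2b). The main obstacle, I expect, is ruling out \emph{twisted} structures in which subquotient orders at different levels amalgamate non-freely. I would handle this by a level-by-level amalgamation argument, showing that any non-free interaction either violates distributivity of $\Lambda$ or forces a missing $3$-type at some primitive slice, contradicting the slice classification from the previous step. Once twistedness is excluded, free amalgamation at each level produces exactly the Fra\"{i}ss\'{e} limit corresponding to a catalog entry, and distinctness of the sixteen entries follows from invariants such as the shape of $\Lambda$ and the dimensions of the slices.
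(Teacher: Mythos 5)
Your proposal has two genuine gaps, one in each half of the argument. First, in the primitive case you pin down each slice by citing Lemma \ref{lemma:trianglereduce} together with Cameron's classification, but Lemma \ref{lemma:trianglereduce} only applies once you know all $3$-types are realized. The substantive content of the primitive classification is Lemma \ref{lemma:primtarget} --- that in a primitive structure every $3$-type built from realized $2$-types is realized --- which is what reduces the situation to either full genericity or identification of orders up to reversal. That lemma is proved by an extensive case analysis with bespoke amalgamation lemmas (the whole of \S 3.5); your write-up assumes its conclusion rather than proving it.

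Second, and more fundamentally, your decomposition of each $<_i$ into a tuple of subquotient orders along the chain $\Lambda$ presupposes that every order is convex with respect to every $\emptyset$-definable equivalence relation. That is exactly what fails for the catalog entries $(1b)$ and $(2b)$: there the minimal equivalence relation $E$ is $\emptyset$-definable but is \emph{dense}, not convex, with respect to one of the orders, so it is not a congruence and the order does not split along it. (Note the catalog's top-level dichotomy is on congruences, not equivalence relations.) Handling this case is the bulk of \S 4: one must introduce the $<_1$-convex closure $\widetilde E$, prove that $E$-classes are $<_1$-dense in their convex closures --- which requires ruling out a plausible configuration in which one $E$-class defines a non-trivial Dedekind cut in another (Lemma \ref{lemma:dense}) --- and then show $\widetilde E$ is a congruence with $<_2 = <_3^{opp}$ between $E$-classes inside a $\widetilde E$-class. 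Your ``twisted structures'' concern (non-free amalgamation between levels) is not the obstruction that actually arises; the Dedekind-cut configuration is, and nothing in your outline addresses it. Your chain claim for $\Lambda$ is also asserted rather than proved; in the paper linearity is a consequence of the detailed type analysis (Corollary \ref{cor:min}), not an input to it.
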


The classification proceeds in two stages. First, we confirm the Primitivity Conjecture for 3 orders using explicit amalgamation arguments. Then for the imprimitive case, we pick a minimal non-trivial equivalence relation $E$. The Primitivity Conjecture makes it fairly clear what happens on $E$-classes, and some analysis of the type structure between $E$-classes eventually allows us to carry out an inductive classification.

\begin{corollary} \label{cor:confirmsqo}
   Every homogeneous 3-dimensional permutation structure is interdefinable with the \fraisse limit of some well-equipped lift of the class of all finite $\Lambda$-ultrametric spaces, for some distributive lattice $\Lambda$.
\end{corollary}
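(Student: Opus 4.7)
The plan is to derive the corollary directly from the classification of Theorem \ref{thm:catalog} by verifying, for each of the 16 structures in the Catalog, that it arises as the \fraisse limit of a well-equipped lift of finite $\Lambda$-ultrametric spaces for an appropriate finite distributive lattice $\Lambda$. Since the conclusion is invariant under quantifier-free interdefinability, this case analysis suffices.

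For $\Gamma$ in the Catalog, take $\Lambda$ to be the lattice of $\emptyset$-definable equivalence relations on $\Gamma$. In every case $\Lambda$ is a finite chain: the 2-chain in (1a), the 3-chain in (1b), a chain whose length records the nesting depth of the composition in (2a), and a 4-chain in (2b). Each linear order $<_i$ is $E$-convex for some $E \in \Lambda$, and so decomposes canonically into a family of subquotient orders indexed by the covering pairs of $\Lambda$ that it refines, together with at most one ``long'' subquotient order whose top relation sits strictly above its bottom relation in $\Lambda$ by more than one cover. The original presentation with the three linear orders and the new presentation with the equivalences in $\Lambda$ together with these subquotient orders are quantifier-free interdefinable, since each $<_i$ is reconstructed by reading its constituent subquotient orders lexicographically along $\Lambda$.

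It then remains to verify, for each of the 16 cases, that the age of $\Gamma$ in the subquotient-order language is a well-equipped lift of finite $\Lambda$-ultrametric spaces in the sense of \cite{RamExp}. This is essentially a matter of matching against a list: the modified construction described in the introduction (subquotient orders in place of linear orders in the construction of \cite{Braun}) is designed to produce precisely such lifts, and each structure in the Catalog is of this form, assembling generic $k$-dimensional permutation structures on the covering pairs of $\Lambda$ together with any additional generic long subquotient orders appearing in (1b) and (2b). Independence of the generic data on each level yields the well-equipped property, and uniqueness of \fraisse limits identifies the constructed structure with $\Gamma$.

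The main obstacle is the bookkeeping in the case analysis, and particularly in cases (1b) and (2b), where a linear order whose decomposition involves a long subquotient order must be separated cleanly from the subquotient orders sitting on single covering pairs of $\Lambda$ before one can match the structure against the modified construction. In each such case the separation is forced by the convexity properties of the orders with respect to the equivalence relations in $\Lambda$, so the verification is routine once the correct choice of $\Lambda$ and of bottom/top relations for each subquotient order is made.
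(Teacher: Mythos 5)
Your proposal matches the paper's approach: the paper proves the corollary exactly by observing that each structure in the Catalog can be presented in a language of equivalence relations and generic subquotient orders (as done explicitly in the imprimitive catalog of \S 4.3), with $\Lambda$ the (always linear) lattice of $\emptyset$-definable equivalence relations, and checking that these presentations are well-equipped lifts. Your case analysis and decomposition of the linear orders into subquotient orders is the same verification the paper performs, so the proposal is correct and not a genuinely different route.
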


Despite the fact that assuming a positive answer to Question \ref{que:sqoconj} gives a simple description of \textit{all} finite-dimensional permutation structures, it is difficult to determine the corresponding catalog for a \textit{fixed number} of linear orders. This is because it is not known what lattices of $\emptyset$-definable equivalence relations can be realized with a given number of orders (this problem is discussed, and an upper bound provided, in \cite[\S 3.4]{Braun}), nor is it true that one needs at most $n$ orders to represent a structure with at most $2^n$ 2-types.

\begin{question} \label{qu:represent}
Given a lattice $\Lambda$, what is the minimal $n$ such that $\Lambda$ is isomorphic to the lattice of $\emptyset$-definable equivalence relations of some homogeneous $n$-dimensional permutation structure?

Given a homogeneous finite-dimensional permutation structure $\Gamma$ presented in a language of equivalence relations and subquotient orders, what is the minimal $n$ such that $\Gamma$ is quantifier-free interdefinable with an $n$-dimensional permutation structure? 
\end{question}

Thus, Corollary \ref{cor:confirmsqo} is not proven by first producing a conjectural classification and then confirming it. Rather, it is proven by observing that all the structures appearing in the classification may be presented appropriately.

Finally, although we have a positive answer to Question \ref{que:sqoconj} in the case of 3 orders, a plausible exceptional imprimitive structure arises in the analysis (see Lemma \ref{lemma:dense}) that is ultimately shown not to exist. However, the proof of non-existence makes use of the limited type structure with 3 orders, and it seems possible similar structures will appear in the richer languages afforded by more orders. 

\section{$\Lambda$-Ultrametric Spaces and Subquotient Orders}
This section is not strictly needed for the classification of the 3-dimensional case, but does provide context by giving the necessary background for Question \ref{que:sqoconj}. Because we hewed to the language of linear orders, we were unable to provide a satisfactory catalog in \cite{Braun} of homogeneous finite-dimensional permutation structures, since some known examples were not produced by the construction. After modifying the construction to work with subquotient orders, we show such examples, for which we can now give a straightforward description. The notion of a \textit{well-equipped lift}, which ensures that we may translate from the language of subquotient orders to linear orders, is then introduced, thus defining all the terms in Question \ref{que:sqoconj}, which in turn provides a conclusion to our catalog. Finally, we show that Question \ref{que:sqoconj} subsumes the Primitivity Conjecture from \cite{Braun}. 

\begin{definition} Let $\Lambda$ be a complete lattice. A \textit{$\Lambda$-ultrametric space} is a metric space where the metric takes values in $\Lambda$ and the triangle inequality involves join rather than addition, i.e. $d(x, z) \leq d(x, y) \join d(y, z)$.
\end{definition}

The following theorem shows that $\Lambda$-ultrametric spaces are equivalent to structures equipped with a lattice of equivalence relations isomorphic to $\Lambda$, or to substructures of such structures. While the lattice of equivalence relations may collapse when passing to a substructure, such as a single point, $\Lambda$-ultrametric spaces have the benefit of keeping $\Lambda$ fixed under substructures.

\begin{theorem}[\cite{Braun}]\label{theorem:isomorphism}
For a given finite lattice $\Lambda$, there is an isomorphism between the category of $\Lambda$-ultrametric spaces and the category of structures consisting of a set equipped with a family of equivalence relations, closed under taking intersections in the lattice of all equivalence relations on the set, and labeled by the elements of $\Lambda$ in such a way that the map from $\Lambda$ to the lattice of equivalence relations is meet-preserving. Furthermore, the functors of this isomorphism preserve homogeneity.
\end{theorem}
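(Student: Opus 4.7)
The plan is to exhibit explicit functors in both directions and check that they are mutually inverse, with the triangle inequality being the only substantive verification.

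First I would define the functor $F$ from $\Lambda$-ultrametric spaces to labeled equivalence structures as follows: send $(X,d)$ to $X$ equipped with $E_\lambda := \{(x,y) \in X^2 : d(x,y) \leq \lambda\}$ for each $\lambda \in \Lambda$. Reflexivity and symmetry of $E_\lambda$ are immediate from the corresponding properties of $d$, while transitivity is exactly the ultrametric triangle inequality: if $d(x,y) \leq \lambda$ and $d(y,z) \leq \lambda$, then $d(x,z) \leq d(x,y) \join d(y,z) \leq \lambda$. Meet-preservation, $E_{\lambda \meet \mu} = E_\lambda \intersect E_\mu$, is equally immediate from $d(x,y) \leq \lambda \meet \mu \iff d(x,y) \leq \lambda \text{ and } d(x,y) \leq \mu$. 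On morphisms, $F$ sends a non-expansive map to the corresponding structure-preserving map; the two notions of morphism coincide because $d_Y(f(x),f(x')) \leq d_X(x,x')$ is equivalent to $f$ preserving all $E_\lambda$.

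In the reverse direction, given a structure $(X, \{E_\lambda\}_{\lambda \in \Lambda})$ with the stated meet-preserving labeling, I would define $d(x,y) := \bigwedge \{\lambda \in \Lambda : x E_\lambda y\}$, noting that $\Lambda$ is finite so this is truly a finite meet, and meet-preservation guarantees $x E_{d(x,y)} y$. I would then observe that meet-preservation forces monotonicity of the labeling: for $\alpha \leq \beta$ we have $E_\alpha = E_{\alpha \meet \beta} = E_\alpha \intersect E_\beta \subseteq E_\beta$. The triangle inequality now follows cleanly: given $\alpha = d(x,y)$ and $\beta = d(y,z)$, monotonicity gives $x E_{\alpha \join \beta} y$ and $y E_{\alpha \join \beta} z$, so transitivity of $E_{\alpha \join \beta}$ yields $x E_{\alpha \join \beta} z$, whence $d(x,z) \leq \alpha \join \beta$. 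Symmetry and the condition $d(x,x) = \bot$ follow from properties of the $E_\lambda$ (the bottom element of $\Lambda$ labels equality, since $E_\bot = \bigcap_\lambda E_\lambda$ by meet-preservation, applied to an enumeration of $\Lambda$).

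The two functors are visibly mutually inverse on objects: starting from $d$ one recovers $d$ via $d(x,y) = \bigwedge \{\lambda : d(x,y) \leq \lambda\}$, and starting from $\{E_\lambda\}$ one recovers each $E_\lambda$ since $d(x,y) \leq \lambda \iff x E_\lambda y$ by monotonicity and meet-preservation. Functoriality on morphisms is routine from the observation above.

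Finally, for preservation of homogeneity, I would note that automorphisms and, more generally, partial isomorphisms of $(X,d)$ are exactly the $d$-preserving partial bijections, which coincide with partial bijections preserving every $E_\lambda$. Thus the automorphism groups and the posets of finite partial isomorphisms match under $F$, and homogeneity --- the extension property for finite partial isomorphisms --- transfers. I expect no real obstacle; the only place one must be careful is ensuring that $d$ is well-defined as a lattice element (using finiteness of $\Lambda$ and meet-preservation so that the infimum is achieved), and the triangle inequality, which as indicated is a one-line consequence of monotonicity and transitivity.
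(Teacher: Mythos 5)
Your functors are exactly the ones the paper records (the paper itself defers the full verification to the cited source, only writing down $d(x,y)=\bigwedge\set{\lambda \in \Lambda \mid x E_\lambda y}$ and $E_\lambda=\set{(x,y) \mid d(x,y)\le\lambda}$), and your verifications---transitivity of each $E_\lambda$ from the join-form triangle inequality, the triangle inequality from monotonicity plus transitivity, mutual inverseness, and the transfer of homogeneity via matching partial isomorphisms---are correct. This is essentially the same approach, with the only caveat being the purely definitional point that the equivalence-relation category must be understood to send $\bbzero$ to equality and $\bbone$ to the universal relation for the two constructions to be strictly inverse.
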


Although we do not prove this theorem here, we will define the functors giving this isomorphism. Given a system of equivalence relations as specified above, we obtain the corresponding $\Lambda$-ultrametric space by taking the same universe and defining $d(x, y) = \bigwedge \set{\lambda \in \Lambda | x E_\lambda y}$. In the reverse direction, given a $\Lambda$-ultrametric space, we obtain the corresponding structure of equivalence relations by taking the same universe and defining $E_\lambda = \set{(x,y) | d(x,y) \leq \lambda}$.

Since the lattices we are considering will always be finite, they will have a top and bottom element, denoted $\bbone$ and $\bbzero$, respectively. Thus, $d(x, y) = \bbzero$ iff $x = y$.

For every finite \textit{distributive} lattice $\Lambda$, a construction was given in \cite{Braun} producing a countable homogeneous finite-dimensional permutation structure $\Gamma$, such that the lattice of $\emptyset$-definable equivalence relations in $\Gamma$ is isomorphic to $\Lambda$. The structure $\Gamma$ is naturally presented as a $\Lambda$-ultrametric space, equipped with multiple orders. When $\Lambda$ is distributive, the class of all finite $\Lambda$-ultrametric spaces is an amalgamation class. The structure $\Gamma$ is constructed by taking the generic $\Lambda$-ultrametric space, and adding linear orders that are generic, except that they are required to be convex with respect to a prescribed set of equivalence relations corresponding to a chain of meet-irreducibles in $\Lambda$; enough such linear orders have to be added so that every meet-irreducible is convex with respect to at least one order, and there are further complications if $\bbzero$ (equality) is meet-reducible.

Working at the level of subquotient orders requires a straightforward revision of the proof of amalgamation in \cite[Lemma 3.7]{Braun}. The proof is actually simplified by the language change, eliminating a special case the construction required when $\bbzero$ is meet-reducible. This yields the following theorem.

\begin{theorem} \label{theorem:amalg}
Let $\Lambda$ be a finite distributive lattice, and $\Gamma$ the generic $\Lambda$-ultrametric space. For each meet-irreducible $E \in \Lambda$, fix a function $f_E: \set{F \in \Lambda| E<F} \to \N$. Then there is a homogeneous expansion of $\Gamma$, which is generic in a natural sense, adding, for each meet-irreducible $E \in \Lambda$ and $F>E$, $f_E(F)$ subquotient orders from $E$ to $F$.

To be more precise, the following holds. Let $\AA^*$ be the class of finite structures $(A,d,\set{<_{E_i, j}}_{j=1}^{n_i})$ satisfying the following conditions.
\begin{itemize}
\item $(A,d)$ is  a $\Lambda$-ultrametric space.
\item $<_{E_i, j}$ is a subquotient order with bottom relation $E_i$, for some meet-irreducible $E_i \in \Lambda$, and top relation $F_{i, j} \in \Lambda$.
\end{itemize}
Then $\AA^*$ is an amalgamation class, and its \fraisse limit is the desired expansion if $\set{E_i}$ and $\set{F_{i,j}}$ are chosen to match $f_E$ from above.
\end{theorem}
\begin{proof}
See Appendix A.
\end{proof}

We now define two useful constructions with subquotient orders, and then give two examples of homogeneous finite-dimensional permutation structures not produced by the construction of \cite{Braun}, but which can be produced once linear orders are replaced by subquotient orders.

\begin{definition}
If $x$ is an $E$-class, and $F$ an equivalence relation above $E$, then $x/F$ will represent the $F$-class containing $x$.
\end{definition}

\begin{definition}
Let $<_{E, F}$ be a subquotient order with bottom relation $E$ and top relation $F$, and let  $<_{F, G}$ be a subquotient order with bottom relation $F$ and top relation $G$. Then the \textit{composition} of $<_{F, G}$ with $<_{E, F}$, denoted $<_{F, G}[<_{E, F}]$, is the subquotient order with bottom relation $E$ and top relation $G$ given by $x \mathrel{<_{F, G}[<_{E, F}]} y$ iff either of the following holds.
\begin{enumerate}
\item $x$ and $y$ are in the same $F$-class, and $x <_{E, F} y$
\item $x$ and $y$ are in distinct $F$-classes, and $x/F <_{F, G} y/F$.
\end{enumerate}
\end{definition}

\begin{definition}
Let $<_{E, F}$ be a subquotient order with bottom relation $E$ and top relation $F$, and let $G$ be an equivalence relation lying between $E$ and $F$. Then the \textit{restriction of $<_{E, F}$ to $G$}, denoted $<_{E, F} \upharpoonright_G$, is the subquotient order with bottom relation $E$ and top relation $G$ given by $x <_{E, F} \upharpoonright_G y$ iff $x$ and $y$ are in the same $G$-class and $x <_{E, F} y$.
\end{definition}

\begin{example} \label{ex:complex3}
Let $\AA$ be the amalgamation class consisting of all finite structures in the language $\set{E, <_1, <_2}$, where $E$ is an equivalence relation, $<_1$ is a linear order, and $<_2$ is an $E$-convex linear order that agrees with $<_1$ on $E$-classes. 

 Let $\AA'$ be the class of all finite structures in the language $\set{E', <'_1, <'_2}$, where $E'$ is an equivalence relation, $<'_1$ is a subquotient order from $=$ to $\bbone$, and $<'_2$ a subquotient order from $E'$ to $\bbone$. This is also an amalgamation class, and its Fra\"\i ss\'e limit $\Gamma'$ is interdefinable with the Fra\"\i ss\'e limit $\Gamma$ of $\AA$. 

To define $\Gamma$ from $\Gamma'$, let ${<_1} = {<'_1}$, and let ${<_2} = {<'_2[<'_1 \upharpoonright_E]}$. To define $\Gamma'$ from $\Gamma$, let ${<'_1} = {<_1}$, and let $x <'_2 y$ iff $\neg xEy$ and $x <_2 y$.

We note that in Theorem \ref{thm:catalog}, this is the structure in $(1b)$ with $j=0$.
\end{example}

\begin{example} \label{ex:fullQ2}
For a more complex example of the use of subquotient orders, consider the full product $\Q^2$. This is a homogeneous structure with universe $\Q^2$ in the language $\set{E_1, E_2, <_1, <_2}$, where $E_1$ and $E_2$ are the relations defined by agreement in the first and second coordinates, respectively, $<_1$ is a generic subquotient order from $E_1$ to $\bbone$, and $<_2$ is a generic subquotient order from $E_2$ to $\bbone$.

Since $E_1 \meet E_2 = \bbzero$, we see that $<_1$ defines a linear ordering on each $E_2$-class, and $<_2$ defines a linear ordering on each $E_1$-class. Thus, the composition (abusing notation slightly) $<_1[<_2]$ defines an $E_1$-convex linear order, and $<_2[<_1]$ defines an $E_2$-convex linear order. 

As this structure requires four linear orders, it does not appear in our catalog.
\end{example}

It is not clear how either of these examples can be produced by a generic construction using linear orders. Neither can be obtained by the construction from \cite{Braun}. There the only constraints we put on the linear orders were convexity conditions, which involves forbidding substructures of order 3. However, in Example 1, we must forbid a substructure of order 2 to force $<_1$ and $<_2$ to agree between $E$-related points. In Example 2, we must forbid the following substructure of order 4 (as well as another symmetric substructure):
\begin{enumerate}
\item $x_1 E_1 x_2$, $y_1 E_1 y_2$, $\neg x_1 E_1 y_1$
\item $x_1 E_2 y_1$, $x_2 E_2 y_2$, $\neg x_1 E_2 x_2$ 
\item $x_1 <_1 x_2$, $y_2 <_1 y_1$
\end{enumerate}

\begin{definition}
Let $\Lambda$ be a finite distributive lattice, and let $L$ be a language consisting of relations for the distances in $\Lambda$ and finitely many subquotient orders, labeled with their top and bottom relations. We say that the language $L$ is \textit{$\Lambda$-well-equipped} if $E \in \Lambda$ appears as the bottom relation of some subquotient order in $L$ with distinct bottom and top relations iff $E$ is meet-irreducible, for every $E \in \Lambda$. 

If $\AA_\Lambda$ is the class of all finite $\Lambda$-ultrametric spaces, and $L$ a $\Lambda$-well-equipped language, we will call $\vec \AA_\Lambda$ a \textit{well-equipped lift} of $\AA_\Lambda$ if it consists of all finite $\Lambda$-ultrametric spaces equipped with subquotient orders from $L$.
\end{definition}

\begin{proposition}[\cite{RamExp}, Proposition 3.12]
Let $\Lambda$ be a finite distributive lattice, $\AA_\Lambda$ be the class of all finite $\Lambda$-ultrametric spaces, and $\vec \AA_\Lambda$ a well-equipped lift of $\AA_\Lambda$, with Fra\"\i ss\'e limit $\vec \Gamma$. Then the relations of $\vec \Gamma$ are interdefinable with a set of linear orders.
\end{proposition}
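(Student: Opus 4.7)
The plan is to construct, from the relations of $\vec \Gamma$, an explicit set of linear orders that is quantifier-free interdefinable with the language of $\vec \Gamma$. The construction extends each subquotient order in $L$ to a full linear order on the universe via compositions, and then recovers the equivalence relations of $\vec \Gamma$ from the convexity structure of these linear orders.

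First, I would construct the linear orders. Fix a subquotient order $<_{E, F} \in L$; by well-equippedness $E$ is meet-irreducible. The relation $<_{E, F}$ only linearly orders $E$-classes within each $F$-class, so I would extend it to a linear order on $X/E$ as follows. Using the finite distributive structure of $\Lambda$, any coarser equivalence relation $G \in [F, \bbone]$ can be written as a meet of meet-irreducibles above it; so, by well-equippedness, $L$ contains enough subquotient orders that, via the composition operation from \S 2 (plus a suitable lexicographic combination at each meet-reducible intermediate relation), one may extend $<_{E, F}$ to a total order on $X/E$. A symmetric construction, using chains below $E$, produces a linear order inside each $E$-class (trivially when $E = \bbzero$). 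The resulting linear order $<^\star_{E, F}$ is $E$-convex and reduces to $<_{E, F}$ modulo $E$ within each $F$-class.

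Second, I would verify interdefinability in both directions. One direction is immediate: each $<^\star_{E, F}$ is quantifier-free definable from the language of $\vec \Gamma$ by construction. For the reverse direction, each original subquotient order is recovered as
\[
  x <_{E, F} y \iff x F y \AND \neg(x E y) \AND x <^\star_{E, F} y.
\]
It remains to recover the equivalence relations themselves from the linear orders. By Theorem \ref{theorem:isomorphism}, it suffices to recover the meet-irreducible ones; for each such $E$, I would identify $E$ as the maximal equivalence relation with respect to which $<^\star_{E, F}$ is convex. Fra\"\i ss\'e-genericity of $\vec \Gamma$ should ensure that no strictly larger equivalence relation has this property, since the next composition step used to extend past $F$ was built from a subquotient order whose generic behavior interleaves $F$-classes inside the next relation in the chain.

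The main technical obstacle lies in this last recovery of equivalence relations. Rigorously justifying it requires showing, from the amalgamation properties of the well-equipped lift $\vec \AA_\Lambda$, that for each meet-irreducible $E$ and every $F' \in \Lambda$ strictly above $E$, the Fra\"\i ss\'e limit realizes a configuration witnessing the non-$F'$-convexity of $<^\star_{E, F}$. This is in the spirit of the explicit translations in Examples \ref{ex:complex3} and \ref{ex:fullQ2}, but must be carried out uniformly across all meet-irreducibles and all compositions of subquotient orders used in the construction.
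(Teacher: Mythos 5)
First, note that the paper itself gives no proof of this proposition; it is imported from \cite{RamExp}*{Prop 3.12}, so I can only assess your plan on its own merits. On those merits there is a genuine gap, in two places. The first is quantitative: producing \emph{one} linear order per subquotient order of $L$ is not enough. Take $\Lambda$ the three-element chain $\bbzero < E < \bbone$ with $L = \{<_{\bbzero,\bbone},\, <_{E,\bbone}\}$. Your recipe yields $<_1 = {<_{\bbzero,\bbone}}$ and a single extension $<_2^\star = {<_{E,\bbone}}[{<_{\bbzero,\bbone}}\upharpoonright_E]$; but then the $2$-type ``$x<_1 y \wedge x<_2^\star y$'' is realized both inside and across $E$-classes, so $E$ is not quantifier-free definable from $\{<_1, <_2^\star\}$. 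This is exactly why the structure of Example \ref{ex:complex3}, which has only two subquotient orders, appears in the catalog as a \emph{three}-dimensional permutation structure: one needs several linear orders through the same meet-irreducible $E$, arranged to agree on one side of $E$ and disagree on the other, so that $E$ becomes a union of $2$-types. The paper warns about precisely this mismatch between the number of subquotient orders and the number of linear orders required (see the discussion around Question \ref{qu:represent}).

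The second gap is in your recovery step. Identifying $E$ as ``the maximal equivalence relation with respect to which $<^\star_{E,F}$ is convex'' is a semantic characterization, not a formula, and any attempt to turn it into a definition via non-convexity witnesses uses quantifiers. Interdefinability here must be quantifier-free: otherwise the reduct to the linear orders need not be homogeneous, and the conclusion that $\vec\Gamma$ ``is'' a finite-dimensional permutation structure fails. The correct mechanism is the one your first gap obstructs: choose the family of composed linear orders so that each relation $d(x,y)\leq\lambda$ is a Boolean combination of the atomic order formulas (a union of $2$-types), as in the explicit presentations in \S 4.3. Relatedly, your treatment of meet-reducible intermediate relations (in particular meet-reducible $\bbzero$) as ``a suitable lexicographic combination'' elides the key point: the composition operation of \S 2 requires the top relation of the inner order to equal the bottom relation of the outer one, and when $\bbzero = E_1 \meet E_2$ is meet-reducible one must instead exploit the meet condition as in Example \ref{ex:fullQ2}, which is a different (and the genuinely delicate) construction. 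So the architecture of your plan is reasonable, but the two steps you defer are exactly where the content of the proposition lies, and as stated the first step produces too few orders for the second to succeed.
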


We close this section by showing the Primitivity Conjecture follows from a positive answer to Question \ref{que:sqoconj}.
\begin{proposition} \label{prop:sqoprim}
Let $\Gamma$ be the generic $n$-dimensional permutation structure, in the language $\set{<_1, ..., <_n}$, and let $<$ be a definable linear order on $\Gamma$. Then there is an $i \in [n]$ such that ${<} = {<_i}$ or ${<} = {<_i^{\opp}}$.  
\end{proposition}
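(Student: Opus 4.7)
The plan is to exploit homogeneity to identify the $\emptyset$-definable linear order $<$ with a subset $S$ of the $2^n$ complete quantifier-free $2$-types of pairs of distinct elements of $\Gamma$, and then to show that antisymmetry, totality, and transitivity of $<$ force $S$ to have the form $\{p : p_i = \epsilon\}$ for some coordinate $i$ and sign $\epsilon$, corresponding to $<_i$ or $<_i^{opp}$.

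By homogeneity, every $\emptyset$-definable binary relation on $\Gamma$ is a union of complete quantifier-free $2$-types on distinct pairs. I parametrize these $2$-types by $\{+,-\}^n$ via $p(x,y)_k = +$ iff $x <_k y$, so that swapping arguments becomes coordinate-wise negation. Writing $S \subseteq \{+,-\}^n$ for the set of types making up $<$, antisymmetry and totality say precisely that $S$ contains exactly one of each antipodal pair $\{p,-p\}$, so $|S| = 2^{n-1}$. For transitivity, a coordinate-by-coordinate check shows that if $(x,y)$ realizes $p$ and $(y,z)$ realizes $q$, then the $2$-type $r$ of $(x,z)$ can be any element of $\{+,-\}^n$ with $r_k = p_k$ whenever $p_k = q_k$ and $r_k$ arbitrary on coordinates where $p_k \ne q_k$; by genericity every such $r$ is in fact realized by some triple in $\Gamma$. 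Hence transitivity of $<$ is equivalent to the sub-cube condition: for all $p,q \in S$,
\[
C(p,q) := \{r \in \{+,-\}^n : r_k = p_k \text{ whenever } p_k = q_k\} \subseteq S.
\]

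After replacing some $<_k$ by $<_k^{opp}$ I may assume $(+,\dots,+) \in S$. Writing $N(p) = \{k : p_k = -\}$ and $\mathcal{S} = \{N(p) : p \in S\} \subseteq 2^{[n]}$, the sub-cube condition translates to: for all $N,M \in \mathcal{S}$, every $N'$ with $N \cap M \subseteq N' \subseteq N \cup M$ lies in $\mathcal{S}$. Applied with $M = \emptyset \in \mathcal{S}$, this says $\mathcal{S}$ is downward closed; applied to arbitrary $N,M \in \mathcal{S}$ with $N' = N \cup M$, it says $\mathcal{S}$ is closed under pairwise unions. So $\mathcal{S}$ is a principal order ideal $2^L$, where $L = \bigcup_{T \in \mathcal{S}} T$, and $|\mathcal{S}| = 2^{|L|} = 2^{n-1}$ forces $L = [n] \setminus \{i\}$ for some $i$. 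Unwinding, $S = \{p : p_i = +\}$, so $<$ agrees with $<_i$ (or with $<_i^{opp}$ once the initial sign flips are undone).

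The main obstacle is the genericity step: one must verify that given any two $2$-types $p,q$ and any consistent candidate $r$, there really is a triple in $\Gamma$ realizing them together, which is where the absence of constraints in the generic $n$-dimensional permutation structure gets used. The remaining combinatorial step---recognizing that a down-closed, union-closed subset of $2^{[n]}$ of size $2^{n-1}$ is a coordinate hyperplane---is then immediate.
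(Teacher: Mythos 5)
Your proof is correct, but it takes a genuinely different route from the paper's. Both arguments open identically: by homogeneity $<$ is a union $S$ of complete $2$-types, antisymmetry and totality force $S$ to contain exactly one type from each antipodal pair, and one normalizes so that the all-positive type lies in $S$. From there the paper argues by contradiction: if $<$ is not some $<_i$ up to reversal, then for each $i$ there is a type $p_i \in S$ reversing $<_i$; chaining these along points $x_1, \dots, x_{n+1}$ and completing each coordinate's partial order appropriately yields a substructure of $\Gamma$ in which $x_1 < \dots < x_{n+1}$ while $tp(x_1, x_{n+1})$ is the all-negative type, violating transitivity. You instead convert transitivity into the subcube condition $C(p,q) \subseteq S$ (using that all consistent $3$-point configurations are realized in the generic structure), and then show the associated set system $\mathcal{S} \subseteq 2^{[n]}$ is downward closed and union-closed, hence equal to $2^{[n]\setminus\{i\}}$ by the cardinality count $|\mathcal{S}| = 2^{n-1}$. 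Your route needs only $3$-point genericity and produces the form of $S$ constructively rather than by contradiction, at the cost of a longer combinatorial step; the paper's route is shorter but must verify realizability of an $(n+1)$-point partial configuration, via the small argument about completing directed acyclic graphs to linear orders. Both are sound, and both correctly isolate the realizability of the relevant configurations as the place where genericity is used.
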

\begin{proof}
Note that $<$ must be a union of 2-types $\cup q_i$, and for each 2-type, exactly one of it and its opposite must be appear as some $q_i$. We may assume $q_0 = \set{x <_1 y, ..., x <_n y}$. If the conclusion is false, then for each $i \in [n]$, there must be a type $p_i = q_j$ for some $j$, such that $p_i \vdash y <_i x$. 

Now consider the partial structure on $\set{x_1, ..., x_{n+1}}$ given by setting $p_i(x_i, x_{i+1})$ for each $i \in [n]$. For each $i \in [n]$, looking at $<_i$ gives a directed acyclic graph, whose transitive closure is a partial order in which $x_1$ and $x_n$ are $<_i$-incomparable. This can then be completed to a linear order in which $x_n <_i x_1$.
 
 Each $<_i$ is a linear order in the resulting structure, which is thus a substructure of $\Gamma$. However, we have $x_1 < ... < x_n$ but $x_n < x_1$. Thus $<$ is not transitive on this structure, and so does not define a linear order on $\Gamma$.
\end{proof}

\section{The Primitive Case}
In this section, we classify the primitive homogeneous 3-dimensional permutation structures, obtaining the following.

\begin{theorem}\label{theorem:primclass}
The primitive homogeneous 3-dimensional permutation structures are as predicted by the Primitivity Conjecture.
\end{theorem}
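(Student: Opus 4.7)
The plan is to classify $\Gamma$ by the set $T$ of realized $2$-types. Each $2$-type on a pair $(x,y)$ with $x \neq y$ is encoded by a sign pattern $(\epsilon_1, \epsilon_2, \epsilon_3) \in \set{+,-}^3$ with $\epsilon_i = +$ iff $x <_i y$. Interchanging $x$ and $y$ negates every sign, so $T$ is closed under $(\epsilon_1, \epsilon_2, \epsilon_3) \mapsto (-\epsilon_1, -\epsilon_2, -\epsilon_3)$ and decomposes as a union of opposite pairs chosen from $P_0 = \set{(+,+,+),(-,-,-)}$, $P_1, P_2, P_3$, where $P_i$ (for $i \geq 1$) is obtained from $P_0$ by reversing the $i$th sign. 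Hence $\abs{T} \in \set{2,4,6,8}$.

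If $\abs{T} = 2$ then $T = P_i$ for some $i$; after reversing suitable $<_j$, the three orders coincide, so $\Gamma$ is interdefinable with a single dense linear order, giving $\Gamma = \Gamma^{(g)}_1$. If $\abs{T} = 4$ then $T = P_i \cup P_j$, and a direct inspection of the six choices shows that each forces two of the orders to coincide up to reversal. Eliminating the redundant order presents $\Gamma$ as a primitive homogeneous $2$-dimensional permutation structure, so Cameron's classification gives $\Gamma \in \set{\Gamma^{(g)}_1, \Gamma^{(g)}_2}$.

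If $\abs{T} = 8$, I would show $\Gamma = \Gamma^{(g)}_3$ via Fra\"iss\'e theory, verifying that the age of $\Gamma$ is the class of all finite $3$-dimensional permutation structures. By homogeneity this reduces to a one-point amalgamation statement: given a finite substructure $B \subseteq \Gamma$ and a consistent assignment of $2$-types between a new point $c$ and each $b \in B$, the extension is realized in $\Gamma$. The realization of all $2$-types handles $\abs{B} = 1$; the main content is to amalgamate triangles, i.e., to show that every triple of $2$-types on $\set{a,b,c}$ consistent with transitivity of each $<_i$ is realized, which is handled by constructing explicit amalgamation diagrams using the preamble's Lemma $1$--Lemma $5$ templates.

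The main obstacle is the case $\abs{T} = 6$, where exactly one pair $P_i$ is excluded. By the symmetry acting on $\set{P_0, P_1, P_2, P_3}$ generated by relabeling and reversing the $<_j$, we may assume without loss of generality that $P_0$ is forbidden. I would then show that forbidding a single pair propagates via triangle completion to force a second pair to be forbidden, reducing to the already-treated case $\abs{T} \leq 4$ and contradicting $\abs{T} = 6$. Concretely, for chosen $p_1, p_2 \in T$ one computes the set of $2$-types that can legally appear as $\mathrm{tp}(a,c)$ in a triangle with $\mathrm{tp}(a,b) = p_1$ and $\mathrm{tp}(b,c) = p_2$ subject to transitivity of each $<_i$, and uses amalgamation to conclude that each legal type must actually be realized; any failure forces an additional $P_j$ to be excluded. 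The amalgamation lemma templates laid out in the preamble are designed precisely for this propagation. The key difficulty is the combinatorial bookkeeping of which types are realized, forbidden, or forced to arise; once the appropriate chains of implications are identified, the argument closes within the limited type space afforded by three orders.
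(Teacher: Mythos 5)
Your overall strategy---organizing the argument by the set $T$ of realized $2$-types, disposing of $\abs{T}\leq 4$ via Cameron's classification, and using amalgamation over a base edge to force $3$-types to be realized---is essentially the paper's strategy (the paper packages the bookkeeping as Lemma \ref{lemma:primtarget} together with Proposition \ref{prop:2prim}). However, there is one genuine gap, in the $\abs{T}=8$ case. You correctly reduce genericity to a one-point extension property over an arbitrary finite $B$, but then assert that ``the main content is to amalgamate triangles,'' which only addresses $\abs{B}=2$. Realizing all $3$-point configurations does not in general imply realizing all finite configurations in a homogeneous structure: the $K_4$-free Henson graph is homogeneous, realizes every $3$-vertex graph, and still omits $K_4$. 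For permutation structures the implication is true but is a theorem, namely Lemma \ref{lemma:trianglereduce}; its proof via Lemma \ref{lemma:4gen} is a separate counting argument (every configuration on $N\geq 5$ points admits a ``separated pairing'' and is therefore the unique amalgam of two $(N-1)$-point configurations), and the counting bound fails exactly at $N=4$, which is why the paper adds an auxiliary fifth point to split a $4$-point configuration into amalgamable $3$-point pieces. Without some version of this step your $\abs{T}=8$ case does not close.

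A secondary remark: both your $\abs{T}=8$ and $\abs{T}=6$ cases rest on the claim that every $3$-type all of whose constituent $2$-types are realized must itself be realized (for $\abs{T}=6$ you need this to build the majority-type diagram whose unique solution realizes the missing pair of $2$-types, and whose factors are themselves $4$-point configurations that must first be produced). This is Lemma \ref{lemma:primtarget}, and it is where the bulk of the paper's work lies: the ``chains of implications'' you defer are an eight-subcase analysis, each subcase a table of roughly a dozen applications of the amalgamation lemmas, with primitivity invoked to rule out equivalence relations generated by unions of $2$-types. Deferring this is reasonable in a proposal, but together with the triangles-to-genericity reduction above it constitutes essentially the entire proof, so as written the proposal identifies the right skeleton while leaving both load-bearing steps unproved.
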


The main lemmas needed for the proof are below.

\begin{lemma}\label{lemma:trianglereduce}
Suppose $(\Gamma; <_1, <_2, <_3)$ is homogeneous and contains all 3-types. Then $\Gamma$ is generic.
\end{lemma}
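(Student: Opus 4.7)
The plan is to show that every finite 3-dimensional permutation structure $F$ embeds in $\Gamma$, which combined with homogeneity and \fraisse's theorem identifies $\Gamma$ with the generic $\Gamma^{(g)}_3$. I would argue by induction on $|F|$, with the base cases $|F|\leq 3$ given by the hypothesis. For the inductive step with $|F|=n+1\geq 4$, I fix a realization of $F \setminus \{y\}$ in $\Gamma$ by the outer inductive hypothesis, and reduce to showing that the 1-type $p$ of $y$ over $F \setminus \{y\}$ prescribed by $F$ is realized in $\Gamma$.

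To realize $p$, I would use a nested induction on $|B|$ for $B \subseteq \Gamma$: every 1-type $p(y)$ over $B$ compatible with the axioms of 3-dimensional permutation structures is realized in $\Gamma$. The base $|B|\leq 2$ reduces to a 3-type, given. For the inner step I fix $b\in B$, use the inner hypothesis to realize $p|_{B \setminus \{b\}}$ by some $y_0\in\Gamma$, and then adjust $y_0$ (or select a different realization of $p|_{B \setminus \{b\}}$) to ensure that $t(y_0,b)$ matches the 2-type prescribed by $p$. My plan for this adjustment is to amalgamate inside $\Gamma$'s age: for each $c\in B\setminus\{b\}$, the triangle on $\{y, b, c\}$ with 2-types prescribed by $p$ lies in the age by hypothesis, and amalgamating it with the substructure of $\Gamma$ realizing $p|_{B \setminus \{b\}}$ yields candidate extensions in the age. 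Combining these amalgamations across all $c\in B\setminus\{b\}$ should pin down the prescribed 1-type.

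The main obstacle is precisely this combination of amalgamations: abstract \fraisse amalgamation of the age only delivers \emph{some} amalgam, not the specific one matching $p$, and the various amalgamations for different $c$ must cohere to pin down $t(y,b)$ together with the remaining 2-types $t(y,c)$. My strategy for overcoming it is to exploit the richness of 3-types available, combined with the auxiliary observation that each pair of the three linear orders on $\Gamma$ forms a homogeneous 2-dimensional permutation structure realizing all 3-types (inherited from the hypothesis on $\Gamma$), hence by Cameron's classification is the generic $\Gamma^{(g)}_2$. The generic 2-permutation reducts provide extra flexibility that, together with iterated triangle amalgamations in the age, should suffice to realize any prescribed 1-type. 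The detailed combinatorial verification at this step is the heart of the proof, and is where the argument specific to 3 orders (as opposed to general $k$) enters.
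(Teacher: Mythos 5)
Your high-level reduction (show every finite configuration lies in the age of $\Gamma$, inducting on size and realizing one-point extensions) is the right frame, but the proposal stops exactly where the proof has to begin. You yourself flag the obstacle: Fra\"\i ss\'e amalgamation of the age only guarantees \emph{some} amalgam, not the one with the prescribed 2-type $t(y,b)$. The two tools you offer to close this gap do not work as stated. First, the reducts $(\Gamma, <_i, <_j)$ need not be homogeneous --- homogeneity is not preserved under reducts --- so you cannot invoke Cameron's classification to conclude they are generic; and even if they were, it is not clear how genericity of the two-order reducts would pin down the three-order type of $y$ over $b$. Second, ``iterated triangle amalgamations ... should suffice'' is precisely the assertion to be proved, and no mechanism is given.

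The missing idea is to arrange the amalgamation so that the desired configuration is the \emph{unique} amalgam, which the paper does via separated pairings. Given a target configuration $A$, one seeks a set of disjoint pairs of its points such that for every order $<_i$ at least one pair is $<_i$-non-adjacent; after adjoining a few auxiliary points one may assume every pair is non-adjacent in every order. Then, deleting one point of a pair from each factor, the 2-type between the deleted pair is forced by transitivity through the in-between witnesses in each order, so $A$ is the unique amalgam of two strictly smaller configurations over their common part and hence lies in the age. A counting argument (Lemma \ref{lemma:4gen}) shows separated pairings always exist once $|A|\geq 5$ for three orders; the four-point case is the delicate one, since all three pairings of a 4-point set can fail to be separated, and the paper handles it by adjoining a fifth point $e$ placed between specific pairs in each order and amalgamating over a 3-point base, then checking the resulting factors do admit separated pairings and so reduce to 3-types. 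None of this machinery appears in your proposal, and it is the substantive content of the lemma.
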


\begin{lemma} \label{lemma:primtarget}
Let $\Gamma$ be a primitive homogeneous 3-dimensional permutation structure. Then all 3-types involving realized 2-types are realized.
\end{lemma}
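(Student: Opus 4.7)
The plan is to prove the contrapositive: if some 3-type whose pairwise restrictions are all realized is \emph{not} realized in $\Gamma$, then $\Gamma$ admits a nontrivial $\emptyset$-definable equivalence relation. So I would suppose that such a 3-type $p$ on $\set{x_1,x_2,x_3}$ is forbidden, and aim to extract an equivalence relation from this forbidden pattern.

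First, I would fix the combinatorial framework. Because each $<_i$ is a linear order, a 2-type on an ordered pair corresponds to an element of $\set{+,-}^3$, and a 3-type on an ordered triple is determined by its three pairwise 2-types, subject to each $<_i$ being acyclic on the triple. Thus the statement ``$p$ is forbidden'' amounts to a concrete combinatorial condition on the age of $\Gamma$, and the hypothesis guarantees that each pairwise 2-type appearing in $p$ occurs in $\Gamma$.

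The next step is to apply the propagation Lemmas 1--5 and their reverse variants from the beginning of the section. These lemmas are designed precisely to extract further information from a forbidden 3-type: Lemmas 1--3 convert a single forbidden 3-type into additional forbidden 3-types of related shape, while Lemmas 4--5 convert accumulated forbidden 3-types into forced 2-types (either forbidden or realized). I would iterate these lemmas starting from the forbidden $p$, tracking the evolving sets of forbidden 3-types and of realized 2-types. Either the iteration at some point forbids a 2-type that was a pairwise restriction of $p$, contradicting the hypothesis directly; or the iteration stabilizes into a proper symmetric family $S$ of realized 2-types closed under the composition rules left by the forbidden 3-types. In the latter case, the relation $xEy$ defined by ``$x=y$ or $\mathrm{tp}(x,y)\in S$'' is a nontrivial $\emptyset$-definable equivalence relation on $\Gamma$, contradicting primitivity.

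The main obstacle is the combinatorial case analysis. Modulo the natural $S_3$-action on the triple and the coordinate-wise reversal action on the three orders, there is a manageable but nontrivial list of symmetry classes of forbidden 3-types, and for each one the corresponding chain of propagation has to be executed and shown to terminate in one of the two modes described. The primitivity hypothesis is used exactly once, at the final step, to rule out the stabilized alternative. I expect the explicit tabulation in Lemmas 1--5 to essentially dictate the propagation, so the case analysis, while tedious, should be mechanical.
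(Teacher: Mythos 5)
Your plan is essentially the paper's own proof: the paper likewise assumes a forbidden 3-type whose 2-types are realized, normalizes it up to symmetry and reversal to the shape $0 \To 1$, and then iterates the amalgamation lemmas through an explicit case division, terminating each branch either in a realized/forbidden contradiction or in a set of 2-types generating a $\emptyset$-definable equivalence relation that violates primitivity. The only caveats are that the terminal contradictions in the paper's tables typically occur at the level of 3-types (a configuration shown realized via Lemma~\ref{lemma:5} and forbidden via Lemma~\ref{lemma:1}) rather than at the level of the pairwise restrictions of $p$, and that the ``mechanical'' tabulation you defer is in fact the bulk of the argument, occupying seven sub-cases.
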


\begin{proposition} [\cite{Braun}, {Proposition 5.1}] \label{prop:2prim}
Let $\KK$ be an amalgamation class of $n$-dimensional permutation structures. If all 3-types involving realized 2-types are realized, then the forbidden 2-types specify that certain orders agree up to reversal.
\end{proposition}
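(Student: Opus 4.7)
The plan is to characterize the set $\RR \subseteq \Ff_2^n$ of realized $2$-types (identifying $\{<,>\}$ with $\Ff_2$) and show it is precisely the solution set of a system of pairwise linear equations $p(i) + p(j) = c_{ij}$, which encodes ``orders $<_i$ and $<_j$ agree up to reversal''. Since $\RR$ is automatically closed under the negation $p \mapsto p + \bbone$, any defining constraints must be invariant under this map, and pairwise constraints are automatically so.

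First I would define an equivalence relation $\approx$ on the index set $[n]$ by declaring $i \approx j$ whenever the projection $\pi_{ij}\colon \RR \to \Ff_2^{\{i,j\}}$ has image of size $2$ rather than $4$; by negation-closure, the size-$2$ image is either $\{(0,0),(1,1)\}$ (agree) or $\{(0,1),(1,0)\}$ (reverse), so the pairwise relation is determined. Transitivity is routine by chaining sign relations. Let $\RR^*$ denote the set of all $p \in \Ff_2^n$ that respect the resulting system of pairwise equations. The inclusion $\RR \subseteq \RR^*$ is immediate from the definition of $\approx$, so it remains to show $\RR^* \subseteq \RR$.

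For the reverse inclusion, I would fix any reference $p_0 \in \RR$ and show that every $p^* \in \RR^*$ can be reached from $p_0$ by a sequence of ``single-class flips''. It suffices to prove a flip lemma: for any $p \in \RR$ and any $\approx$-class $C \subseteq [n]$, the type $p^C$ obtained from $p$ by negating its values on the coordinates of $C$ (with the appropriate intra-class sign pattern) also lies in $\RR$. The key input is pairwise surjectivity: for $i \in C$ and $j$ in any other class, the projection $\pi_{ij}$ is onto $\Ff_2^{\{i,j\}}$, producing a witness $q \in \RR$ that differs from $p$ exactly on the $C$-coordinates. The $3$-type hypothesis can then be invoked on a carefully constructed triangle $(p_{xy},p_{xz},p_{yz})$ of $\RR$-types whose third side realizes $p^C$; realization of this triangle forces $p^C$ to be a realized $2$-type.

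The main obstacle is constructing this realizing triangle explicitly: one must assign per-coordinate values to the three $2$-types so that each coordinate projects to a consistent linear order on three points, all three edges lie in $\RR$, and one edge equals the target $p^C$. I expect that the coordinates falling in classes other than $C$ can be chosen freely using pairwise surjectivity, while coordinates within $C$ are forced by the class's internal sign pattern, and that the resulting triangle is coordinate-wise consistent by construction. Once realization is obtained from the hypothesis, iterating the flip lemma across all classes in the symmetric difference between $p_0$ and $p^*$ produces $p^* \in \RR$, completing the argument.
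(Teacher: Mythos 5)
Your reduction to a ``flip lemma'' is structurally sound: closure of $\RR$ under reversal does force each pairwise projection to have size $2$ or $4$, the relation $\approx$ is transitive, and $\RR^* \subseteq \RR$ would indeed follow by flipping one class at a time. But the mechanism you propose for proving the flip lemma cannot work, and that is where all the content of the proposition lives. The hypothesis ``all 3-types involving realized 2-types are realized'' applies only to a triangle all three of whose edges are \emph{already known} to be realized; a triangle ``whose third side realizes $p^C$'' is not covered by it unless $p^C \in \RR$ already, which is circular. Nor can the third side be forced from the other two: given $tp(x,y)$ and $tp(x,z)$, coordinate $i$ of $tp(y,z)$ is determined by transitivity only when $x$ is $<_i$-between $y$ and $z$, so the third edge is fully forced only when $tp(x,y)^{opp} = tp(x,z)$, in which case $tp(y,z)=tp(x,z)$ is an old type. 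No $3$-point configuration can manufacture a new realized 2-type. Relatedly, your argument never uses the hypothesis that $\KK$ is an amalgamation class, and that hypothesis is essential: for $n=3$, the reversal-closed set of 2-types corresponding to $\set{000,111,011,100,101,010} \subset \Ff_2^3$ has all three pairwise projections surjective (so your $\RR^*$ is all of $\Ff_2^3$) yet omits two types, and the hereditary class of all finite structures omitting those two 2-types realizes every 3-type involving realized 2-types; it simply fails amalgamation.

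The missing engine is the $(p,q,r)$-majority diagram (Figure 3.1, from Proposition 5.1 of \cite{Braun}): a two-point amalgamation problem over a three-point base whose unique solution assigns $a_1 <_i a_2$ iff $<_i$ holds in the majority of $p,q,r$. The 3-type hypothesis guarantees that the factors of that diagram are realized, and the amalgamation property then forces the coordinatewise majority of $p$, $q$, $r$ to be a realized 2-type. Once $\RR$ is closed under coordinatewise majority and under reversal, it is determined by its pairwise projections (median-closed subsets of the cube are exactly the solution sets of systems of 2-clauses, and reversal-closure reduces these to equality/disequality constraints), which is exactly your $\RR^*\subseteq\RR$. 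If you wish to keep the flip-lemma formulation, derive the flip from majority closure rather than from realized triangles.
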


\begin{proof}[Proof of Theorem \ref{theorem:primclass}] 
By Lemma \ref{lemma:primtarget} all 3-types involving realized 2-types are realized. Thus, if no 2-types are forbidden, all 3-types are realized, and so by Lemma \ref{lemma:trianglereduce}, the resulting structure is generic. If some 2-types are forbidden, then by Proposition \ref{prop:2prim}, some orders agree up to reversal. Thus the resulting structure is quantifier-free interdefinable with a primitive homogeneous 2-dimensional permutation structure. By the classification in \cite{Cameron}, these satisfy the Primitivity Conjecture.
\end{proof}

The amalgamation diagram appearing in the following definition is the key to the proof of Proposition \ref{prop:2prim}, and will be used elsewhere in this section.

\begin{definition}
Give 2-types $p,q,r$, the \textit{$(p,q,r)$-majority diagram} is the following amalgamation diagram, where $x_1 \ra{q} x_3$ holds (and follows from $x_1 \ra{q} x_2 \ra{q} x_3$), but is not drawn.

\begin{figure}[h]
\begin{diagram}
& & \overset{x_1}{\bullet} & & \\
& \ruTo^{p} & \dTo_q  & \rdTo^{q} & \\
a_1 \odot &\rTo^p & \bullet_{x_2}  & \rTo^r & \odot a_2 \\
& \rdTo_{q} & \dTo_q & \ruTo_{r} & \\
& & \underset{x_3}{\bullet} & & \\
\end{diagram}
\caption{}
\end{figure}
\end{definition}

\begin{remark}
We offer some guidance on interpreting the above amalgamation diagram. Solid points lie in the base, while circled points lie outside the base; those on the left are in the first factor, while those on the right are in the second. Labeled arrows indicate 2-types. The diagram is completed by determining $tp(a_1, a_2)$.  
\end{remark}

The following was proven in \cite[Proposition 5.1]{Braun}.

\begin{lemma}
There is a unique solution to the $(p,q,r)$-majority diagram, given by $a_1 <_i a_2$ iff $<_i$ is true in the majority of $p,q,$ and $r$.
\end{lemma}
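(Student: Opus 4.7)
The plan is to argue one coordinate at a time. Fix $i \in \set{1,2,3}$ and, for each 2-type $s \in \set{p,q,r}$, set $\epsilon_s = +$ if $s$ asserts $x <_i y$ and $\epsilon_s = -$ otherwise. Since the diagram already specifies the 2-type of every pair among the five points except $(a_1,a_2)$, uniqueness reduces to showing that in each coordinate $<_i$ the ordering of $a_1$ and $a_2$ is determined by transitivity, and coincides with the majority of $\epsilon_p,\epsilon_q,\epsilon_r$.

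The key combinatorial feature of the diagram is that the three ``transitivity paths'' from $a_1$ to $a_2$ use three different pairs of labels: through $x_1$ the path reads $a_1 \ra{p} x_1 \ra{q} a_2$, through $x_2$ it reads $a_1 \ra{p} x_2 \ra{r} a_2$, and through $x_3$ it reads $a_1 \ra{q} x_3 \ra{r} a_2$. Thus each of the three label-pairs $\set{p,q}$, $\set{p,r}$, $\set{q,r}$ has a unique witnessing vertex among $x_1, x_2, x_3$. By pigeonhole, two of $\epsilon_p,\epsilon_q,\epsilon_r$ agree, and the two-step chain along the path through the witnessing $x_j$ for that pair forces $tp(a_1,a_2)$ in the $<_i$ coordinate to carry the common sign -- which, by definition, is the majority sign.

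For existence, I would observe that if in each $<_i$ we simply declare $a_1, a_2$ ordered in the majority direction, then the resulting relation on the five points is a linear order: every new comparison introduced beyond those already in the diagram is precisely a consequence of the chains above, so no transitivity conflict can occur; conversely the opposite choice in any coordinate would immediately contradict the corresponding witnessing chain. Hence the majority completion realizes the diagram, and by the preceding paragraph is the only completion that does so. I do not anticipate a substantive obstacle; once one reads the three transitivity paths off the diagram, the pigeonhole step closes the argument.
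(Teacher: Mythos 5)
Your proof is correct. Note that the paper does not prove this lemma itself but imports it from \cite{Braun}*{Prop 5.1}; your argument --- observing that the three two-step paths $a_1 \ra{p} x_1 \ra{q} a_2$, $a_1 \ra{p} x_2 \ra{r} a_2$, $a_1 \ra{q} x_3 \ra{r} a_2$ realize all three pairs of labels, so that in each coordinate pigeonhole produces a chain forcing the majority sign --- is precisely the reason the diagram is constructed this way, and I would expect it to match the cited proof. The one place worth tightening is the existence half: rather than asserting that ``no transitivity conflict can occur,'' observe that the only triples not contained in a single factor are $(a_1, x_j, a_2)$ for $j = 1,2,3$, and in a given order $<_i$ such a triple fails to be linearly ordered only when the two edges through $x_j$ chain while the direct edge opposes them; but a chaining pair of labels already carries the majority sign, so the majority completion never does this. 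With that remark the argument is complete.
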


\subsection{Reduction to 3-types} 
The following lemma strengthens an argument appearing in the proof of \cite[Theorem 1]{Cameron}.
\begin{lemma}\label{lemma:4gen}
Let $\Gamma$ be a homogeneous $k$-dimensional permutation structure that contains all configurations on $n-1$ points, where $n$ satisfies
$$\frac{n!}{(n-\ell)!} > 2^\ell k \text{ for } \ell = \floor{n/2}$$
Then $\Gamma$ is generic.

More precisely, any configuration on $N \geq n$ points is contained in the unique amalgam of $(N-1)$-point configurations.
\end{lemma}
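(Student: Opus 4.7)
The plan is to prove the ``more precisely'' clause by induction on $N \geq n$, which immediately implies that $\Gamma$ is generic. The base case $N = n$ and the hypothesis on $(n-1)$-configurations are combined, and for the inductive step I assume every configuration on fewer than $N$ points lies in the age of $\Gamma$, then fix an $N$-point configuration $C$ with $(N-1)$-point restrictions $D_1,\ldots,D_N$, each in the age by induction.

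The uniqueness part of the claim is routine: since $N \geq n \geq 3$ (as one can check from the inequality for $\ell = \lfloor n/2\rfloor$), for any pair $\{x_i,x_j\}$ the 2-type $\operatorname{tp}^C(x_i,x_j)$ is recorded inside every $D_k$ with $k \notin \{i,j\}$. Hence all 2-types of $C$ are determined by the $(N-1)$-restrictions, and $C$ is the unique $N$-point configuration extending the amalgamation diagram of the $D_i$.

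For existence, I realize $D_N$ in $\Gamma$ as $\{y_1,\ldots,y_{N-1}\}$ and argue that there is $y_N \in \Gamma$ realizing the 2-types $\operatorname{tp}^C(x_i,x_N)$ against every $y_i$. Working toward a contradiction, suppose no such $y_N$ exists. For each $\ell$-subset $I \subseteq \{1,\ldots,N-1\}$ with $\ell = \lfloor n/2\rfloor$, the induction hypothesis applied to the $(\ell+1)$-point configuration $\{x_i : i \in I\} \cup \{x_N\}$ (which has $\leq n-1$ points) produces a witness $z_I \in \Gamma$ realizing the partial 1-type over $\{y_i : i \in I\}$. By our assumption, each $z_I$ must fail on some coordinate outside $I$, yielding a ``failure pattern'' consisting of a sign choice on each of the $\ell$ tested coordinates together with a choice of which of the $k$ orders causes the discrepancy, for a total of at most $2^\ell k$ possible patterns.

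The counting is then resolved by the hypothesis: there are $\frac{n!}{(n-\ell)!}$ ordered $\ell$-subsets, strictly exceeding the $2^\ell k$ failure patterns, so some $z_I$ must carry no failure and thus realize the full type, producing the required $y_N$ and contradicting the supposition. The main obstacle is precisely this pigeonhole step: one must carefully identify the set of obstructions and verify that their number is indeed bounded by $2^\ell k$, which is where the specific choice $\ell = \lfloor n/2\rfloor$ is essential (it balances the factorial count of ordered subsets against the exponential count of sign-plus-order patterns). The remainder of the argument — uniqueness of the amalgam, induction setup, extraction of witnesses via the inductive hypothesis — is routine Fra\"\i ss\'e-theoretic bookkeeping.
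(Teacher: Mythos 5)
Your overall strategy diverges from the paper's, and the divergence introduces a genuine gap at exactly the point you flag as the ``main obstacle.'' The pigeonhole step does not work as stated: knowing that the number of $\ell$-subsets exceeds the number of ``failure patterns'' only shows (by pigeonhole) that two subsets share a pattern, not that some subset carries no pattern. For the latter you would need the map from subsets to failure patterns to be injective, which you neither define nor justify, and which has no evident reason to hold. (The count is also off: the relevant subsets live in $\{1,\dots,N-1\}$, giving $\frac{(N-1)!}{(N-1-\ell)!}$ ordered $\ell$-subsets, not $\frac{n!}{(n-\ell)!}$.) More fundamentally, even if some $z_I$ realized the type of $x_N$ over $\{y_i : i\in I\}$ exactly, that is a witness over only $\ell$ of the $N-1$ parameters; you supply no mechanism for combining partial witnesses over $\ell$-subsets into a single point realizing the type over all of $y_1,\dots,y_{N-1}$. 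The only tool available before genericity is established is that the age of $\Gamma$ is an amalgamation class, and that tool only forces a \emph{particular} configuration to appear when the amalgam in question is \emph{unique} --- which your decomposition (an $(N-1)$-point configuration plus one new point) does not arrange. Your ``uniqueness'' paragraph addresses uniqueness of the abstract configuration given its restrictions, which is not the same thing and does not help realize it.

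The paper's proof uses the inequality for a different combinatorial object: \emph{pairings}, i.e.\ sets of $\ell$ disjoint unordered pairs of points of $A$. The number of pairings is $\frac{n!}{2^\ell \ell! (n-2\ell)!}$ and the number of ``unseparated'' ones (those in which, for some order $<_i$, every pair is $<_i$-adjacent) is at most $k\binom{n-\ell}{\ell}$; the hypothesis is exactly what makes the first count exceed the second, so a separated pairing $P$ exists. After adjoining at most $\ell-1$ auxiliary points to make every pair of $P$ non-adjacent in every order (forming $A^*$), each pair $(a,a')$ has, for every order, a base point strictly between $a$ and $a'$; hence $A^*$ is the \emph{unique} amalgam of $A^*\setminus\{a\}$ and $A^*\setminus\{a'\}$ over $A^*\setminus\{a,a'\}$, and recursing through all $\ell$ pairs expresses $A^*$ as an iterated unique amalgam of $(N-1)$-point structures, which are realized by hypothesis or induction. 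This betweenness-forces-uniqueness step is the essential idea your proposal is missing; without it, realizing the factors tells you only that \emph{some} amalgam is realized, not the intended one.
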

\begin{proof}
Let $A$ be a structure on $N$ points. Let a \textit{pairing} be an $\ell$-set of unordered pairs of points from $A$, with each point appearing in at most one pair. A pairing is \textit{separated} if, for every $i \leq k$, there is a pair $\set{a_i, a'_i}$ such that $a_i$ and $a'_i$ are not $<_i$-adjacent; otherwise the pairing is unseparated.

\begin{claim*}
 There is at least one separated pairing on $A$.
\end{claim*} 

\begin{claimproof}The number of pairings is given by $\binom{n}{2\ell} \binom{2\ell}{2_1, 2_2, ..., 2_\ell}/\ell! = \frac{n!}{2^\ell \ell! (n-2\ell)!}$. We will now show the number of unseparated pairings is at most $k \binom{n-\ell}{\ell}$. First suppose $k=1$. If $N$ is even, there is only 1 unseparated pairing. If $N$ is odd, the pairing is determined after choosing any one of the odd-indexed points to not appear, so there are $\ceil{n/2}$. In both cases, there are $\binom{n-\ell}{\ell}$. For larger $k$, note that an unseparated pairing must be unseparated with respect to at least one order, so there are at most $k\binom{n-\ell}{\ell}$. By inequality in the hypothesis, we are done.
\end{claimproof}

Let $P$ be a separated pairing. By extending $A$ by a single point, we may, in every order, make one pair non-adjacent. Thus, after extending $A$ by at most $\ell-1$ points, an extension we will denote by $A^*$, we may assume that every pair in $P$ is non-adjacent in every order.

Let $(a_1, a'_1)$ be a pair from $P$, and let $F_1 = A^* \bs{\set{a_1}}, F'_1 = A^* \bs{\set{a'_1}}$, and $B_1 = A^* \bs{\set{a_1, a'_1}}$. By assumption, for every $i \leq k$, there is a point $b_i \in B_1$ that is $<_i$-between $a_1$ and $a'_1$. Thus $A^*$ is the unique amalgam of $F_1$ and $F'_1$ over $B$.

We may recursively continue this process on each factor until we have gone through all the pairs in $P$. At the end, each factor will look like a copy of $A^*$ with $\ell$ points removed, and so have size $N-1$.

\end{proof}

\begin{lemma:trianglereduce}
Suppose $(\Gamma; <_1, <_2, <_3)$ is homogeneous and contains all 3-types. Then $\Gamma$ is generic.
\end{lemma:trianglereduce}
\begin{proof}
By Lemma \ref{lemma:4gen}, if $\Gamma$ contains all 4-point configurations, it is generic.

Let $A = (\set{a,b,c,d}; <_1, <_2, <_3)$ be a substructure of $\Gamma$. There are three possible pairings: $P_1 = \set{\set{a,b}, \set{c,d}}, P_2 = \set{\set{a,c}, \set{b,d}}, P_3 = \set{\set{a,d}, \set{b,c}}$. Each order can be unseparated in at most one pairing, so if all the pairings are unseparated, each must be so with respect to a different order. By possibly relabeling the points, we may assume that $a <_1 b <_1 c <_1 d$, and by relabeling orders we may assume that $P_i$ is unseparated with respect to $<_i$.

Thus, we have that $a,c$ and $b,d$ must be $<_2$-adjacent, and $a,d$ and $b,c$ must be $<_3$-adjacent.

We may extend $A$ by a single point, $e$, that lies between $a$ and $b$ with respect to $<_1$, lies between $a$ and $c$ with respect to $<_2$, and lies between $b$ and $c$ with respect to $<_3$, and label the resulting structure $A^*$. Then, viewing $B = \set{e, c, d}$ as the base of an amalgamation digram with $F = B \cup \set{a}$ the first factor and $F' = B \cup \set{b}$ the second, we have that $A^*$ is the unique amalgam.

We now show that $F$ and $F'$ have separable pairings, and so are contained in the unique amalgam of certain 3-types. For $F$, $P = \set{\set{a,c}, \set{e,d}}$ is separated, since $e$ and $d$ are never $<_2$-adjacent and only $<_3$-adjacent if $b$ and $d$ were $<_3$-adjacent, in which case $a$ and $c$ not $<_3$-adjacent. For $F'$, $P' = \set{\set{b,c}, \set{e,d}}$ is separated, since $e$ and $d$ are never $<_3$-adjacent and only $<_2$-adjacent if $a$ and $d$ were $<_2$-adjacent, in which case $b$ and $c$ are not $<_2$-adjacent.
\end{proof}

\subsection{Notation}
There are 8 2-types, which we may associate with the vertices of the unit cube $\set{\pm 1}^3$ based on whether $<_i$ holds in the 2-type. The unit cube is bipartite, with one part consisting of the following four types at Hamming distance 2, while the other part consists of their opposites.

$$0: \la{123} \hspace{1 cm} 1: \ra{23} \hspace{1 cm} 2: \ra{13} \hspace{1 cm} 3: \ra{12}$$

We now introduce notation for 3 families of 3-types that will recur in our analysis. From left to right, the 3-types below will be denoted $p \To q$, $p \From q$, and $C_3(p,q,r)$.

\begin{figure}[h]
\begin{diagram}
 \bullet & & \rTo^{q} & & \bullet  &  \bullet & & \rTo^{q} & & \bullet & \bullet & & \rTo{q} & & \bullet  \\
& \luTo_{p} &  & \ruTo_{p}  & & &  \rdTo_p &  & \ldTo_p & & &  \luTo_p &  & \ldTo_r & \\
& & \underset{}{\bullet} & & & & &  \underset{}{\bullet} & & & & & \underset{}{\bullet} & & \\
\end{diagram}
\caption{}
\end{figure}

\subsection{Amalgamation Lemmas}

In the following lemmas, $(p,q,r,s)$ is taken to be some permutation of the types $(0,1,2,3)$.

\begin{lemma} \label{lemma:1}
Suppose $p \To q$ is forbidden. Then one of each of the following combinations of types is forbidden.
\begin{enumerate}[(A)]
\item $(p \To r$ \text{and} $C_3(p,r,s))$ or $(r \From q$ \text{and} $C_3(p,q,r))$
\item $p \From q$ or $q \From p$
\end{enumerate}
\end{lemma}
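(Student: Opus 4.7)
The plan is to prove both clauses of Lemma \ref{lemma:1} by contraposition: we assume the conclusion fails and exhibit an amalgamation diagram whose unique amalgam realizes $p \To q$, contradicting the hypothesis. The workhorse is the $(p,q,r)$-majority diagram (and its variants), whose unique amalgam has $a_1 \to a_2$ of majority type, forced by transitivity of each $<_i$ on the three points already present.

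For part (B), the natural attempt to glue a $p \From q$ triangle and a $q \From p$ triangle along a common 2-point base fails, since the ``top'' edge of $p \From q$ has type $q$ while that of $q \From p$ has type $p$. I therefore expect the argument to use either a 1-point base with a 5-point amalgam, or to pass through an intermediate configuration; the uniqueness (via coordinatewise linear-order transitivity) pins down enough of the 2-types between the two new vertices to produce a $p \To q$ sub-triangle on the new vertices together with one shared point.

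For part (A), I split into four subcases according to which of $\set{p \To r, C_3(p,r,s)}$ and which of $\set{r \From q, C_3(p,q,r)}$ is realized. The archetypal case is that both $p \To r$ and $r \From q$ are realized; here I would modify the $(p,q,r)$-majority diagram by replacing the left factor's forbidden triangle $p \To q$ on $\{a_1,x_1,x_2\}$ with the realized $p \To r$, suitably adjusting the incident edges to $x_3$, while keeping the right factor's $r \From q$ triangle on $\{a_2,x_2,x_3\}$ intact. A majority-style computation then fixes the 2-type of $(a_1,a_2)$, and inspection of the triangle $\{a_1,a_2,x_i\}$ for an appropriate $i$ yields $p \To q$. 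The $C_3$-substitutions amount to re-routing a 3-cycle configuration in place of the $\To$ or $\From$ triangle; the role of the fourth type $s$ in $C_3(p,r,s)$ is to pin down the remaining coordinate via majority.

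The main obstacle is the detailed case analysis and bookkeeping. For each of the four subcases I must check: (i) each factor lies in the amalgamation class, in particular contains no $p \To q$ sub-triangle of its own; (ii) the new edges of the amalgam each have their 2-types uniquely forced by the three existing edges (the majority rule); and (iii) one of the resulting sub-triangles is exactly $p \To q$. The combinatorics is delicate because the majority of three even types from $\set{0,1,2,3}$ is always an \emph{odd} type (the opposite of the fourth even type), so tracking the even/odd parity of each forced edge under reversal is essential to ensuring that the triangle one wants to extract has all three edges of the correct even types.
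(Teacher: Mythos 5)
Your high-level framing---argue by contraposition and exhibit an amalgamation diagram whose unique completion contains $p \To q$---is the right one, and your four-way case split for (A) matches the logical structure of the claim. But in both parts the actual content of the proof is the specific diagram together with the verification that its completion is unique, and neither is supplied. For (A), the paper does not modify the five-point majority diagram: it amalgamates the two assumed-realized \emph{triangles} directly over their common edge of type $r$ (a two-point base), e.g.\ the factor $p \To r$ with apex $x$ over an edge $b \ra{r} t$ against the factor $r \From q$ containing $y$ over the same edge. What makes a two-point base suffice---and what your sketch omits---is the Hamming-distance computation: the path through $t$ forces $tp(x,y)$ to agree with $p$ on the two coordinates where $p$ agrees with $r^{opp}$, the path through $b$ forces it on the two coordinates where $p$ agrees with $q^{opp}$, and since $q \neq r$ these two pairs cover all three orders, so $tp(x,y)=p$ and $\set{x,y,b}$ is a copy of $p\To q$. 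Your proposed modification of the majority diagram is not clearly coherent as stated: the triangle on $\set{a_1,x_1,x_2}$ you wish to replace contains the base edge $x_1 \ra{q} x_2$, which is shared with the other factor, and you never verify that the new edge is uniquely forced or exhibit the resulting $p \To q$ subtriangle.

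For (B) you correctly observe that the naive two-point-base gluing of $p \From q$ and $q \From p$ fails, but what you offer in its place is a conjecture, not an argument; note that a one-point base cannot work either, since a single path through the base point forces at most the coordinates on which the two incident types agree, so the completion is far from unique. The paper's argument is the $(p^{opp},q,p)$-majority diagram: its unique solution has $a_1 \ra{q} a_2$ (the coordinatewise majority of $p^{opp},q,p$ is $q$), which together with $x_2 \ra{p} a_1$ and $x_2 \ra{p} a_2$ produces $p \To q$ with apex $x_2$. The triangles occurring in the two factors are the transitive $q$-chain, $p \From q$, $q \From p$, and $q \To p$; the first three are realized under the contradiction hypothesis and the last is realized by Lemma \ref{lemma:5}, which is why that lemma is invoked. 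Finding this particular diagram is the essential idea of part (B), and it is absent from your proposal.
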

\begin{proof}
For $(A)$, we amalgamate one 3-type from each pair over an edge of type $r$. In the below diagrams, we assume $p \To r$ is realized; the arguments assuming $C_3(p,r,s)$ is realized are similar.

\begin{figure}[h]
\begin{diagram}
& & {\bullet} & & & & & & {\bullet} & & \\
& \ruTo^{p} &  & \luTo^{r} & & & & \ruTo^{p} &  & \rdTo^{p} & \\
x\odot & & \uTo{r} & & \odot y  & & x\odot & & \uTo{r} & & \odot y \\
& \rdTo_{p} &  & \ldTo_{q} & & & & \rdTo_{p} &  & \ldTo_{q} & \\
& & {\bullet} & & & & & & {\bullet} & &\\
\end{diagram}
\caption{}
\label{fig:11}
\end{figure}

We wish to argue that the only way to complete both diagrams is to take $tp(x, y) = p$. This is clear for the right diagram, by transitivity. For the left diagram, note that since $p$ and $r$ are at Hamming distance 2, $p$ and $r^{\opp}$ agree on exactly 2 orders, as do $p$ and $q^{\opp}$. Thus, by transitivity, $tp(x, y)$ must agree with $p$ in all 3 orders.

For $(B)$, we use the $(p^{\opp}, q, p)$-majority diagram (see Figure 3.1), and then take Lemma \ref{lemma:5} into account.
\end{proof}

\begin{lemma} \label{lemma:4}
Suppose $p \To q$, $C_3(p,q,r)$, and $C_3(p,q,s)$ are forbidden. If $p$ and $q$ are realized, then $q \From p$ is realized.
\end{lemma}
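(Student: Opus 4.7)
The plan is an amalgamation argument over a single-vertex base. I take the base $\set{x_1}$, the $2$-point factor $\set{x_1, a}$ with $tp(a, x_1) = p$, and the $2$-point factor $\set{x_1, x_2}$ with $tp(x_1, x_2) = q$. Since $p$ and $q$ are realized, both factors lie in the class, and any amalgam is a triangle $\set{a, x_1, x_2}$ whose $3$-type is determined entirely by the remaining parameter $\gamma := tp(a, x_2)$. My goal is to show that the four consistent choices of $\gamma$ correspond precisely to the four named $3$-types $p \To q$, $q \From p$, $C_3(p, q, r)$, and $C_3(p, q, s)$, so that forbidding three of them forces the amalgam to realize $q \From p$.

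To enumerate the consistent values of $\gamma$, I regard $2$-types as vectors in $\set{\pm 1}^3$ as in the Notation subsection. Since $p$ and $q$ both lie in the ``even'' part of the bipartite cube of $2$-types, they are at Hamming distance $2$ and agree on exactly one coordinate. Transitivity of each order $<_i$ then forces $\gamma_i = p_i$ on the unique agreeing coordinate and leaves $\gamma_i$ free on the other two, yielding exactly four consistent completions; a direct calculation identifies these as $\gamma \in \set{p, q, r^{opp}, s^{opp}}$.

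I then match each completion with a named $3$-type: if $\gamma = p$, then $a$ has two outgoing $p$-edges with top edge $tp(x_1, x_2) = q$, giving $p \To q$; if $\gamma = q$, then $x_2$ has two incoming $q$-edges (from $x_1$ and from $a$) with top edge $tp(a, x_1) = p$, giving $q \From p$; if $\gamma = r^{opp}$, the directed cycle $a \to x_1 \to x_2 \to a$ has edge-types $(p, q, r)$, giving $C_3(p, q, r)$; and symmetrically, $\gamma = s^{opp}$ gives $C_3(p, q, s)$. Since three of these four completions are forbidden by hypothesis, the only surviving choice is $\gamma = q$, which realizes $q \From p$.

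No genuine obstacle arises. The only real work is the enumeration of the four consistent $\gamma$'s and the identification of each with a named $3$-type; once the Hamming-distance-$2$ geometry of $p$ and $q$ is in hand, both steps are routine. This lemma is in some sense dual to the amalgamation arguments used in Lemmas 1--3: there, a unique amalgam combines with a forbidden target to forbid a factor, while here only one of the four possible amalgams avoids all the forbidden $3$-types, so that single amalgam must be realized.
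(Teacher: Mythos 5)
Correct, and essentially the same as the paper's proof: the paper amalgamates the same two edges $a \ra{p} x_1$ and $x_1 \ra{q} x_2$ over the one-point base, observes that the hypotheses exclude $tp(a,x_2) \in \set{p, r^{opp}, s^{opp}}$, and that transitivity rules out every other type except $q$. Your explicit enumeration of the four transitivity-consistent completions via the Hamming-distance-2 geometry, and their identification with the four named 3-types, merely spells out what the paper leaves implicit.
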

\begin{proof}
We try to complete the following amalgamation diagram.

\begin{figure}[h]
\begin{diagram}
 x \odot & &  & & \odot y  \\
& \rdTo_{p} &  & \ruTo_{q} & \\
& & {\bullet} & & \\
\end{diagram}
\caption{}
\label{fig:12}
\end{figure}

By assumption $tp(x, y) \neq p, r^{\opp}, s^{\opp}$. The remaining types, except $q$, are ruled out by transitivity.
\end{proof}

\begin{lemma} \label{lemma:5}
Suppose $p \To q$ is forbidden. If $p$ and $q$ are realized, then $q \To p$ is realized.
\end{lemma}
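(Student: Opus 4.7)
My plan is to run a two-factor amalgamation over the single point $B$ in which transitivity of the three orders leaves only two consistent completions, and the hypothesis then rules out one of them. I would take the first factor to be $\{A, B\}$ realizing $A \ra{p} B$ and the second factor $\{B, C\}$ realizing $C \ra{q} B$; both factors lie in the age of $\Gamma$ since $p$ and $q$ are realized, so amalgamation produces a 3-point amalgam with $tp(A, B) = p$, $tp(C, B) = q$, and some $tp(A, C)$ still to be pinned down.

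The main computational step is identifying the possible values of $tp(A, C)$. By the convention of this section, $p \neq q$ lie in the same part of the bipartite cube $\{\pm 1\}^3$, so they differ in exactly two coordinates. In each coordinate $i$ where $p_i \neq q_i$, the three points are linearly ordered by $<_i$ and transitivity forces $tp(A, C)_i = p_i$. In the unique coordinate $i_0$ where $p_{i_0} = q_{i_0}$, the points $A$ and $C$ lie on the same side of $B$, so $tp(A, C)_{i_0}$ is free. A quick sign-vector check then identifies the two consistent completions as exactly $tp(A, C) = p$ and $tp(A, C) = q^{opp}$.

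A short case analysis closes the proof. If $tp(A, C) = p$, then the amalgam realizes $A \ra{p} B$, $A \ra{p} C$, $C \ra{q} B$, which is the forbidden configuration $p \To q$ with apex $A$, so this case cannot occur. Hence the amalgam must realize $tp(A, C) = q^{opp}$, and then $C \ra{q} A$, $C \ra{q} B$, $A \ra{p} B$ is precisely $q \To p$, as desired. The only delicate step is the sign-vector bookkeeping pinning the two consistent completions to $p$ and $q^{opp}$; once the amalgamation is set up, everything else is mechanical, and there is no substantive obstacle.
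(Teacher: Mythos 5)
Your proof is correct and is essentially the paper's own argument: the same two-factor amalgamation over a single base point with $A \ra{p} B$ and $C \ra{q} B$, the same transitivity analysis reducing the completion to $tp(A,C) \in \{p, q^{opp}\}$, and the same use of the forbidden configuration to force $q^{opp}$, yielding $q \To p$. You have merely spelled out the sign-vector bookkeeping that the paper summarizes as ``ruled out by transitivity.''
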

\begin{proof}
We try to complete the amalgamation diagram in Figure \ref{fig:13}.

\begin{figure}[h]
\begin{diagram}
 x \odot & &  & & \odot y  \\
& \rdTo_{p} &  & \ldTo_{q} & \\
& & {\bullet} & & \\
\end{diagram}
\caption{}
\label{fig:13}
\end{figure}

By assumption $tp(x, y) \neq p$.  The remaining types, except $q^{\opp}$, are ruled out by transitivity.
\end{proof}

\subsection{Case Division}
The proof of Lemma \ref{lemma:primtarget} proceeds by consideration of several cases. However, the following lemma provides a uniform point of departure.

\begin{lemma}
If $\Gamma$ is primitive and omits a 3-type then without loss of generality it omits the 3-type of type $(0 \Rightarrow 1)$ while realizing the 2-types $0,1$.
\end{lemma}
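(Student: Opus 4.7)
The plan is to use the symmetry group $S_3\times\Z_2^3$ that acts on the language by permuting and reversing the three orders, combined with the amalgamation lemmas already proved in this section, to normalize any omitted 3-type into the form $0\To 1$. First I would reduce to an omission in the $\To$ family. Every 3-type lies in one of the families $\To$, $\From$, $C_3$. The $\From$ family coincides with $\To$ at the level of unlabeled three-vertex substructures: given a $p\From q$ configuration on $\{a,b,c\}$ with sink $c$, the relabeling $c\mapsto a'$, $a\mapsto b'$, $b\mapsto c'$ exhibits the same substructure as a $p^{opp}\To q$ configuration, so omission of a $\From$-type is an omission of a $\To$-type in disguise. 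For the $C_3$ family I would argue via primitivity and the amalgamation lemmas (in particular Lemmas \ref{lemma:1}--\ref{lemma:5}) that if every $\To$-type with realized 2-types is already realized, then the majority-diagram machinery together with primitivity forces every admissible $C_3$-triangle to be realized as well. This $C_3$-case is where I expect the main technical obstacle to lie, and it is the only step at which primitivity is genuinely invoked.

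Once we have an omitted $\To$-type $p\To q$, I would normalize the labels. The group $S_3\times\Z_2^3$ acts transitively on the eight 2-types, so we may take $p=0$; the stabilizer of $0=(-,-,-)$ inside this group is exactly the $S_3$-factor, since any nontrivial reversal moves the all-minus vector. Moreover, $p\To q$ and $p\To q^{opp}$ are the same unlabeled substructure---swap the two target vertices of a $p\To q$-configuration to see this---so $q$ matters only modulo the involution $q\mapsto q^{opp}$. Modulo this involution and the $S_3$-action, $q$ falls into exactly two orbits, represented by $\{0,0^{opp}\}$ and $\{1,2,3,1^{opp},2^{opp},3^{opp}\}$.

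To eliminate the degenerate orbit I would invoke Lemma \ref{lemma:5}: if $q\in\{0,0^{opp}\}$, the lemma applied with $p=0$ forces the ``reverse'' $\To$-type to be realized, but this reverse configuration is isomorphic to $0\To 0^{opp}$ as an unlabeled three-vertex substructure (both are three-element chains agreeing in all three orders, related by the relabeling that swaps the extremal points), contradicting the omission hypothesis. So $q$ must lie in the second orbit, and a final $S_3$-permutation reduces this to $q=1$. The omission of $0\To 1$ is vacuous unless both 2-types $0$ and $1$ are realized, so the nontrivial case is exactly the one in the statement.
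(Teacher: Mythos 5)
Your normalization of an omitted $\To$- or $\From$-type is essentially the paper's argument (the paper compresses it to ``by reversing the orders and changing the language we may assume $0\Rightarrow 1$ is forbidden''), and your version is carried out correctly and in more detail: the identification of $p\From q$ with $p^{opp}\To q$ as unlabeled substructures, the transitivity of $S_3\ltimes\Z_2^3$ on the eight 2-types with stabilizer $S_3$, the reduction of $q$ modulo $q\mapsto q^{opp}$ to two orbits, and the elimination of the degenerate orbit (where $0\To q$ is a monochromatic chain, forced by transitivity whenever the 2-type $0$ is realized --- note only that Lemma~\ref{lemma:5} is stated for $p,q\in\{0,1,2,3\}$, so you are really re-running its one-line amalgamation argument rather than citing it) are all sound. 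The paper also includes a preliminary step you omit: it first arranges that at least three of the types $0,1,2,3$ are realized, since otherwise two orders agree up to reversal and one reduces to fewer orders; this is needed to make sense of ``realizing the 2-types $0,1$'' after relabeling.

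The genuine gap is the $C_3$ case, which you explicitly defer (``I would argue\dots'', ``this is where I expect the main technical obstacle to lie''). That case is the entire content of the lemma beyond bookkeeping, and your guess at its mechanism is off: the paper invokes neither primitivity nor Lemmas~\ref{lemma:1}--\ref{lemma:5} there. What it actually does is: (i) since every $\To$/$\From$-type on realized 2-types is realized, the $(0,1,2)$-majority diagram can be assembled --- its two factors decompose entirely into triangles of the forms $p\To q$ and $q\From p$ together with $q$-chains --- and its unique solution realizes the 2-type $3^{opp}$, hence $3$, so all eight 2-types are present; (ii) any triangle $C_3(p,q,r)$ then occurs as the triangle on $a_1$, a base point, and $a_2$ inside the unique solution of a suitably chosen majority diagram, whose factors again consist only of $\To$/$\From$-types and chains, so $C_3(p,q,r)$ is forced to be realized and cannot be the omitted type. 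Without steps (i) and (ii) your argument shows only that an omitted $\To$/$\From$-type normalizes to $0\To 1$; it does not rule out that the omitted 3-type is a genuine directed triangle, which is exactly the case the lemma must dispose of.
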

\begin{proof}
We may assume that at least 3 of the 2-types $0,1,2,3$, say $0,1,2$ after relabeling, are realized, since otherwise we reduce to the case of fewer linear orders. 

If any 3-type $p \Rightarrow q$ or $p \Leftarrow q$ is forbidden while $p$ and $q$ are realized, then by reversing the orders and changing the language we may assume that $0 \Rightarrow 1$ is forbidden. So assume this is not the case. 

By the above paragraphs, we may construct the standard $(0,1,2)$-majority diagram, which shows $3$ is realized as well. Up to a change of language, the forbidden 3-type must be of the form $C_3(0,1,2)$. But this is a substructure of the unique solution to the $(1^{\opp}, 0^{\opp}, 2^{\opp})$-majority diagram.
\end{proof}

\begin{remark}
Although we may assume the 2-types $1$ is realized, we may not want to, since it breaks the symmetry between $1,2,$ and $3$. Thus, this will not be assumed unless otherwise noted.
\end{remark}

We now divide into cases the ways Lemma \ref{lemma:primtarget} might fail.
\begin{list}{}{}
\item \textbf{Case 1:} All 3-types of type $0 \To p$ ($p=1,2,3)$ are forbidden, and $0$ is realized.
\item \textbf{Case 2:} For a given pair of 2-types $p,q$ at Hamming distance 2, at most 2 3-types of type $p \To q$ are forbidden, and $0,1$ are realized.
\begin{list}{}{}
\item \textbf{Case 2.1:} There exist $p,q,r$ at Hamming distance 2 such that $p \To q$ and $p \To r$ are forbidden.
\item \textbf{Case 2.2:} For any $p,q,r$ at Hamming distance 2, at most one of $p \To q$ and $p \To r$ is forbidden.
\end{list}  
\end{list}

We also wish to divide Case 1 into subcases, so assume $0 \To p$ is forbidden, for $p = 1,2,3$. Consider the directed graph with vertex set $\set{1,2,3}$, and an edge $(p,q)$ when the type $C_3(0, p, q)$ is forbidden.

By Lemma \ref{lemma:1}, for any arrangement $(p,q,r)$ of the vertices, either $(p,q)$ or $(q,r)$ is an edge. Thus, $D$ contains a symmetric edge $p \leftrightarrow q$, which we may assume is $1 \leftrightarrow 2$, and $D$ has at least 4 edges.
We now subdivide Case 1 as follows.

\begin{list}{}{}
\item \textbf{Case 1.1:} $D$ has 6 edges.
\item \textbf{Case 1.2:} $D$ has 5 edges.
\item \textbf{Case 1.3:} $D$ has 4 edges.
\end{list}

\subsection{Proof of Lemma \ref{lemma:primtarget}}
The proof proceeds by starting with the assumptions of one of the subcases and then repeatedly applying the amaglamation lemmas \ref{lemma:1}, \ref{lemma:4}, and \ref{lemma:5} until reaching a contradiction. This contradiction could either be that a structure is both forbidden and realized, or could be a violation of the primitivity constraint by the appearance of a definable equivalence relation. 

More explicitly, the 2-types $p_1, ..., p_k$ generate a definable equivalence relation if every 3-type on points $x,y,z$ satisfying the following is forbidden.
\begin{enumerate}
\item $tp(x, y), tp(y, z) \in \set{p_1, ..., p_k}$
\item $tp(x, z) \not\in \set{p_1, ..., p_k}$
\end{enumerate} 

The proofs are presented in tables. In each line, some 3-type is shown to be realized or forbidden. The reason is given; if the reason is one of the amalgamation lemmas then the assignment of $(p,q,r,s)$ is given; finally the previous lines used are given. When one of the amalgamation lemmas is used with opposite types, so for example $p \From q$ is assumed forbidden rather than $p \To q$, an ``R'' (for ``reversed'') is appended to the name of the lemma.

In the tables, we assume all 2-types are realized; after every table is a remark noting the alterations required if some 2-type is forbidden.

\begin{longtable}{|c|c|c|c|c|c|}
\caption{Case 1.1} \\
\hline 
\multicolumn{1}{|c|}{Line} &
 \multicolumn{1}{c|}{Realized} & 
 \multicolumn{1}{c|}{Forbidden} &
\multicolumn{1}{c|}{Reason} &
 \multicolumn{1}{c|}{(p,q,r,s)} & 
 \multicolumn{1}{c|}{Used} \\ \hline 
\endfirsthead
   1. & & $0 \To p$ & Case 1 & & \\
   2. & & $C_3(0,p,q)$ & Case 1.1 & & \\
   3. & $p \From 0$ & & \ref{lemma:4} & & 1,2 \\
   4. & & $0 \From p$ & \ref{lemma:1}B & &1,3 \\
   \hline
      \multicolumn{6}{|l|}{{Now the 2-type $0$ generates an equivalence relation, contradicting primitivity.}} \\ \hline
\end{longtable}

\begin{remark}
This proof works with some 2-type forbidden. If $1,2$, or $3$ is forbidden, then the corresponding case of line 4 follows without needing line 3.
\end{remark}

The treatment of the remaining cases follows the same scheme at somewhat greater length, and shows that the amalgamation lemmas given previously suffice to complete the analysis. Other methods will be required in \S 4.

\begin{longtable}{|c|c|c|c|c|c|} 
\caption{Case 1.2} \\
\hline 
\multicolumn{1}{|c|}{Line} &
 \multicolumn{1}{c|}{Realized} & 
 \multicolumn{1}{c|}{Forbidden} &
\multicolumn{1}{c|}{Reason} &
 \multicolumn{1}{c|}{(p,q,r,s)} & 
 \multicolumn{1}{c|}{Used} \\ \hline 
\endfirsthead
   1. & & $0 \To p$ & Case 1 & & \\
   2. &  &$C_3(0, p, q)$    & Case 1.2 & & \\
      &  &except $C_3(0,3,2)$   &  & & \\
   3. &$C_3(0, 3, 2)$  &  & Case 1.2 & & \\
   4. &$1 \From 0$  &   & \ref{lemma:4} &(0,1,2,3) & 1,2 \\
   5. &$2 \From 0$  &   & \ref{lemma:4} &(0,2,1,3) & 1,2 \\
   6. & &$0 \From 1$   & \ref{lemma:1}B &(0,1,2,3) & 1,4 \\
   7. & &$0 \From 2$   & \ref{lemma:1}B &(0,2,1,3) & 1,5 \\
   8. &$0 \From 3$ &   & Primitivity & & 1,6,7 \\
   9. & &$3 \From 0$   & \ref{lemma:1}B &(0,3,1,2) & 1,8 \\
   10. & &$3 \From 2$   & \ref{lemma:1}AR &(3,0,2,1) & 5,9 \\
   11. & &$3 \To 2$  or $2 \To 3$  & \ref{lemma:1}BR &(3,2,0,1) & 10 \\
   12. & $2 \To 0$ & & \ref{lemma:5} & (0,2,1,3) & 1 \\
   13. & $3 \To 0$ & & \ref{lemma:5} & (0,3,1,2) & 1 \\
   14. & &$3 \To 0$ or $2 \To 0$   & \ref{lemma:1}A &(3,2,0,1) & 3,8,11 \\
   & & & &or (2,3,0,1) & \\
   \hline 
   \multicolumn{6}{|l|}{{Now line 14 contradicts lines 12 and 13.}} \\ \hline
\end{longtable}

\begin{remark}
This proof works with some 2-types forbidden. By assumption, the types $2$ and $3$ are realized. The assumption that the type $1$ is realized only appears in line $4$, which becomes unnecessary if $1$ is forbidden since line 4 is only used for line $6$.
\end{remark} 

Case 1.3 requires further subdivision according to our assumptions on the directed graph $D$. We draw the $D$ corresponding to each of the further subcases.

\begin{figure}[h]
\begin{diagram}
1 \bullet & & \rDouble^{1.3.1} & & \bullet 2 & 1 \bullet & & \rDouble^{1.3.2} & & \bullet 2 & 1 \bullet & & \rDouble^{1.3.3} & & \bullet 2 \\
& \luDouble &  &  & & &  \rdTo &  & \ldTo & & &  \luTo &  & \ruTo & \\
& & \underset{3}{\bullet} & & & & &  \underset{3}{\bullet} & & & & & \underset{3}{\bullet} & & \\
\end{diagram}
\caption{}
\label{fig:14}
\end{figure}

\begin{longtable}{|c|c|c|c|c|c|} 
\caption{Case 1.3.1} \\
\hline 
\multicolumn{1}{|c|}{Line} &
 \multicolumn{1}{c|}{Realized} & 
 \multicolumn{1}{c|}{Forbidden} &
\multicolumn{1}{c|}{Reason} &
 \multicolumn{1}{c|}{(p,q,r,s)} & 
 \multicolumn{1}{c|}{Used} \\ \hline 
\endfirsthead
   1. & & $0 \To p$ & Case 1 & & \\
   2. &  &$C_3(0, 1, p)$,   & Case 1.3.1 & & \\
   & &$C_3(0,p,1)$ & & & \\
   3. &$C_3(0, 3, 2)$,&  & Case 1.3.1 & & \\
   & $C_3(0,2,3)$   & & & & \\
   4. &$1 \From 0$  &   & \ref{lemma:4} &(0,1,2,3) & 1,2 \\
   5. & &$0 \From 1$   & \ref{lemma:1}B &(0,1,2,3) & 1,4 \\
   6. &$0 \From 2$ or $0 \From 3$ & & Primitivity & & 1,5\\
   7. &$0 \From 3$ & & W.l.o.g & & 6\\
   8. & &$3 \From 0$   & \ref{lemma:1}B &(0,3,1,2) & 1,8 \\
   9. & &$3 \From 2$   & \ref{lemma:1}AR &(3,0,2,1) & 3,8 \\
   10. & &$3 \To 2$ or $2 \To 3$   & \ref{lemma:1}BR &(3,2,0,1) & 9 \\
   11. & $2 \To 0$ & & \ref{lemma:5} & (0,2,1,3) & 1 \\
   12. & $3 \To 0$ & & \ref{lemma:5} & (0,3,1,2) & 1 \\
   13. & &$3 \To 0$ or $2 \To 0$   & \ref{lemma:1}A &(3,2,0,1)& 3,10 \\
   & & & &  or (2,3,0,1) & \\
   \hline
      \multicolumn{6}{|l|}{{Now line 13 contradicts lines 11 and 12.}} \\ \hline
\end{longtable}

\begin{remark}
This proof works with some 2-types are forbidden. By assumption, the types $2$ and $3$ are realized. The assumption that the type $1$ is realized only appears in line $4$, which becomes unnecessary if $1$ is forbidden since line 4 is only used for line $5$.
\end{remark}

\begin{longtable}{|c|c|c|c|c|c|} 
\caption{Case 1.3.2} \\
\hline 
\multicolumn{1}{|c|}{Line} &
 \multicolumn{1}{c|}{Realized} & 
 \multicolumn{1}{c|}{Forbidden} &
\multicolumn{1}{c|}{Reason} &
 \multicolumn{1}{c|}{(p,q,r,s)} & 
 \multicolumn{1}{c|}{Used} \\ \hline 
\endfirsthead
   1. & & $0 \To p$ & Case 1 & & \\
   2. &  &$C_3(0, 1, p)$, $C_3(0,2,p)$   & Case 1.3.2 & & \\
   3. &$C_3(0, 3, p)$  &  & Case 1.3.2 & & \\
   4. &$1 \From 0$  &   & \ref{lemma:4} &(0,1,2,3) & 1,2 \\
   5. &$2 \From 0$  &   & \ref{lemma:4} &(0,2,1,3) & 1,2 \\
   6. & &$0 \From 1$   & \ref{lemma:1}B &(0,1,2,3) & 1,4 \\
   7. & &$0 \From 2$   & \ref{lemma:1}B &(0,2,1,3) & 1,5 \\
   8. & &$0 \From 3$   & \ref{lemma:1}AR &(0,1,3,2) & 3,6 \\
   \hline
      \multicolumn{6}{|l|}{{Now $0$ generates an equivalence relation.}} \\ \hline
\end{longtable}

\begin{remark}
By assumption, all 2-types are realized.
\end{remark}

\begin{longtable}{|c|c|c|c|c|c|} 
\caption{Case 1.3.3} \\
\hline 
\multicolumn{1}{|c|}{Line} &
 \multicolumn{1}{c|}{Realized} & 
 \multicolumn{1}{c|}{Forbidden} &
\multicolumn{1}{c|}{Reason} &
 \multicolumn{1}{c|}{(p,q,r,s)} & 
 \multicolumn{1}{c|}{Used} \\ \hline 
\endfirsthead
   1. & & $0 \To p$ & Case 1 & & \\
   2. &$C_3(0, p, 3)$  &  & Case 1.3.3 & & \\
   3. & &$2 \From 1$   & \ref{lemma:1}A &(0,1,2,3) & 1,2 \\
   4. & &$1 \From 2$   & \ref{lemma:1}A &(0,2,1,3) & 1,2 \\
   5. &$1 \From 2$ &   & \ref{lemma:5}R &(2,1,0,3) & 3 \\
   \hline
      \multicolumn{6}{|l|}{{However, line 5 contradicts line 4.}} \\ \hline
\end{longtable}

\begin{remark}
By assumption, all 2-types are realized.
\end{remark}

For Case 2.1, we may assume that $0 \To 1$ and $0 \To 2$ are forbidden, and thus $0 \To 3$ is realized.

\begin{longtable}{|c|c|c|c|c|c|} 
\caption{Case 2.1} \\
\hline 
\multicolumn{1}{|c|}{Line} &
 \multicolumn{1}{c|}{Realized} & 
 \multicolumn{1}{c|}{Forbidden} &
\multicolumn{1}{c|}{Reason} &
 \multicolumn{1}{c|}{(p,q,r,s)} & 
 \multicolumn{1}{c|}{Used} \\ \hline 
\endfirsthead
   1. & &$0 \To 1$, $0 \To 2$ & Case 2.1 & & \\
   2. &$0 \To 3$  & & Case 2 & & \\
   3. &$1 \To 0$  & & \ref{lemma:5} & (0,1,2,3) &1 \\
   4. &$2 \To 0$  & & \ref{lemma:5} & (0,2,1,3) &1 \\
   5. &  &$3 \From 1$, $C_3(0, 1, 3)$  & \ref{lemma:1}A &(0,1,3,2) &1,2 \\
   6. &  &$3 \From 2$, $C_3(0, 2, 3)$  & \ref{lemma:1}A &(0,2,3,1) &1,2 \\
   7. &$3 \From 0$&   & Case 2 & & 5,6 \\
   8. &$1 \From 3$ &   & \ref{lemma:5}R& (3,1,0,2)& 5 \\
   9. &$2 \From 3$ &   & \ref{lemma:5}R& (3,2,0,1)& 6 \\
   10. & &$1 \From 0$ or $0 \From 1$   & \ref{lemma:1}B & (0,1,2,3) & 1 \\
   \hline   
\end{longtable}

We now split into cases based on line 10.
\begin{list}{}{}
\item \textbf{Case 2.1.1:} $1 \From 0$ is forbidden.
\item \textbf{Case 2.1.2:} $0 \From 1$ is forbidden.
\end{list}

\begin{longtable}{|c|c|c|c|c|c|} 
\caption{Case 2.1.1} \\
\hline 
\multicolumn{1}{|c|}{Line} &
 \multicolumn{1}{c|}{Realized} & 
 \multicolumn{1}{c|}{Forbidden} &
\multicolumn{1}{c|}{Reason} &
 \multicolumn{1}{c|}{(p,q,r,s)} & 
 \multicolumn{1}{c|}{Used} \\ \hline 
\endfirsthead
   11. & &$1 \From 0$   & Case 2.1.1 & & 10\\
   12. & &$1 \From 2$, $C_3(1,3,2)$   & \ref{lemma:1}AR &(1,0,2,3) & 4,11 \\
   13. &$1 \To 3$ &   & \ref{lemma:4}R& (3,1,0,2)& 5,12 \\
   14. & &$3 \To 1$   & \ref{lemma:1}BR &(3,1,0,2) & 5,13 \\
   15. & &$3 \To 0$   & \ref{lemma:1}AR &(1,0,3,2) & 8,11 \\
   16. &$3 \To 2$ &  & Case 2 & & 14,15 \\
   17. & &$1 \From 3$ or $3 \To 2$ & \ref{lemma:1}AR & (1,2,3,0) & 12\\
   \hline
      \multicolumn{6}{|l|}{{However, line 17 contradicts line 8 and line 16.}} \\ \hline
\end{longtable}

\begin{longtable}{|c|c|c|c|c|c|} 
\caption{Case 2.1.2} \\
\hline 
\multicolumn{1}{|c|}{Line} &
 \multicolumn{1}{c|}{Realized} & 
 \multicolumn{1}{c|}{Forbidden} &
\multicolumn{1}{c|}{Reason} &
 \multicolumn{1}{c|}{(p,q,r,s)} & 
 \multicolumn{1}{c|}{Used} \\ \hline 
\endfirsthead
   11. & &$0 \From 1$   & Case 2.1.2 & & 10\\
   12. & &$0 \From 2$ or $2 \To 1$, $C_3(0,2,1)$   & \ref{lemma:1}AR &(0,1,2,3) & 11 \\
\hline
\end{longtable}

We now split into cases based on line 12.
\begin{list}{}{}
\item \textbf{Case 2.1.2.1:} $0 \From 2$ is forbidden.
\item \textbf{Case 2.1.2.2:} $2 \To 1, C_3(0,2,1)$ is forbidden.
\end{list}

\begin{longtable}{|c|c|c|c|c|c|} 
\caption{Case 2.1.2.1} \\
\hline 
\multicolumn{1}{|c|}{Line} &
 \multicolumn{1}{c|}{Realized} & 
 \multicolumn{1}{c|}{Forbidden} &
\multicolumn{1}{c|}{Reason} &
 \multicolumn{1}{c|}{(p,q,r,s)} & 
 \multicolumn{1}{c|}{Used} \\ \hline 
\endfirsthead
   13. & &$0 \From 2$  & Case 2.1.2.1 & & 12 \\
   14. &$0 \From 3$ &   & Case 2& & 11,13 \\
   15. & &$3 \To 1$, $C_3(0,3,1)$   & \ref{lemma:1}AR &(0,1,3,2) & 11,14 \\
   16. & &$3 \To 2$, $C_3(0,3,2)$   & \ref{lemma:1}AR &(0,2,3,1) & 13,14 \\
      \hline
         \multicolumn{6}{|l|}{{Now, $0 \cup 3$ generates an equivalence relation.}} \\ \hline
\end{longtable}

\begin{longtable}{|c|c|c|c|c|c|} 
\caption{Case 2.1.2.2} \\
\hline 
\multicolumn{1}{|c|}{Line} &
 \multicolumn{1}{c|}{Realized} & 
 \multicolumn{1}{c|}{Forbidden} &
\multicolumn{1}{c|}{Reason} &
 \multicolumn{1}{c|}{(p,q,r,s)} & 
 \multicolumn{1}{c|}{Used} \\ \hline 
\endfirsthead
   13. & &$2 \To 1$, $C_3(0,2,1)$  & Case 2.1.2.2 & & 12 \\
   14. &$2 \From 0$ &   & \ref{lemma:4}& (0,2,1,3) & 1,6,13 \\
   15. & &$0 \From 2$   & \ref{lemma:1}B &(0,2,1,3) & 1,14 \\
         \hline
            \multicolumn{6}{|l|}{{Now $0 \From 2$ is forbidden, and we may finish as in Case 2.1.2.1.}} \\ \hline
\end{longtable}

\begin{remark}
This proof works with some 2-types forbidden. By assumption, $1$ and $3$ are realized. Assume $2$ is forbidden. Case 2.1.1 ends at line 15 with a contradiction of the Case 2 assumption, since $3 \To 1$, $3 \To 0$, and $3 \To 2$ will all be forbidden. Only lines 4 and 9 depend on 2 being realized, and those are only used in line 12, which would hold anyway if 2 were forbidden. Case 2.1.2.1 works as before, since lines 4 and 9 aren't used anywhere. Also, there is no need for Case 2.1.2.2, since we know $2 \From 0$ is forbidden.
\end{remark}

For Case 2.2, we may assume $0 \To 1$ is forbidden, and thus $0 \To 2$ and $0 \To 3$ are realized.

\begin{longtable}{|c|c|c|c|c|c|} 
\caption{Case 2.2} \\
\hline 
\multicolumn{1}{|c|}{Line} &
 \multicolumn{1}{c|}{Realized} & 
 \multicolumn{1}{c|}{Forbidden} &
\multicolumn{1}{c|}{Reason} &
 \multicolumn{1}{c|}{(p,q,r,s)} & 
 \multicolumn{1}{c|}{Used} \\ \hline 
\endfirsthead
   1. & &$0 \To 1$& Case 2.2 & & \\
   2. &$0 \To 2$, $0 \To 3$  & & Case 2.2 & & \\
   3. &  & $2 \From 1$, $C_3(0,1,2)$ & \ref{lemma:1}A & (0,1,2,3) &1,2 \\
   4. &  & $3 \From 1$, $C_3(0,1,3)$ & \ref{lemma:1}A & (0,1,3,2) &1,2 \\
   5. & $2 \From 0$, $2 \From 3$  & & Case 2.2 & &3\\
   6. & $3 \From 0$, $3 \From 2$  & & Case 2.2 & &4 \\
   7. & & $3 \To 1$, $C_3(2,3,1)$   & \ref{lemma:1}AR & (2,1,3,0) & 3,5 \\
   8. & & $2 \To 1$, $C_3(3,2,1)$   & \ref{lemma:1}AR & (3,1,2,0) & 4,6 \\
   9. & $3 \To 0$, $3 \To 2$  & & Case 2.2 & &7\\
   10. & $2 \To 0$, $2 \To 3$  & & Case 2.2 & &8\\
   11. & &$0 \From 1$, $C_3(3,1,0)$ & \ref{lemma:1}A & (3,1,0,2) &7,9\\
   12. & & $C_3(2,1,0)$ & \ref{lemma:1}A & (2,1,0,3) &8,10\\
   \hline
   \multicolumn{6}{|l|}{{Now $0 \cup 2 \cup 3$ generates an equivalence relation.}} \\ \hline
\end{longtable}

\begin{remark}
By assumption, all 2-types are realized.
\end{remark}

\section{The Imprimitive Case}
 We make an initial case division of the imprimitive case for $\Gamma$ an imprimitive homogeneous 3-dimensional permutation structure. Let $E$ be a minimal non-trivial $\emptyset$-definable equivalence relation.

\begin{list}{}{}
\item \textbf{Case 1:} $E$ is convex with respect to $<_1, <_2, <_3$, and thus is a congruence.
\item \textbf{Case 2:} $E$ is not convex with respect to at least one of $<_1, <_2, <_3$. Without loss of generality, we assume $E$ is not $<_1$-convex.
\end{list}

In Case 1, we may inductively proceed by factoring out $E$, noting that the resulting structure now omits a 2-type, and so $\Gamma$ will be a composition of a homogeneous 3-dimensional permutation structure with one fewer 2-type available and a primitive homogeneous 3-dimensional permutation structure. 

Our goal for Case 2 will be to show that $\Gamma$ is still determined by its restriction to $E$-classes and by the $E$-quotient of the reduct of $\Gamma$ forgetting all orders for which $E$ is non-convex.

The following statement, which is immediate from Theorem \ref{theorem:primclass}, will be important for both cases.

\begin{lemma} \label{lemma:Cgeneric}
Let $E$ be a minimal non-trivial $\emptyset$-definable equivalence relation in a homogeneous 3-dimensional permutation structure, and $C$ be an $E$-class. Then the induced structure on $C$ is generic, modulo the agreement of certain orders up to reversal.
\end{lemma}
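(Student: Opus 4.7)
The plan is to reduce the claim directly to Theorem \ref{theorem:primclass} by verifying that the induced structure on $C$ is a primitive homogeneous 3-dimensional permutation structure, and then applying that theorem. Since each $<_i$ restricts to a linear order on $C$, the triple $(C; <_1, <_2, <_3)$ is a 3-dimensional permutation structure.

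For homogeneity of the induced structure, I would take any isomorphism $f \colon A \to B$ between finite substructures of $C$ and use homogeneity of $\Gamma$ to extend $f$ to some $\sigma \in \mathrm{Aut}(\Gamma)$. Since $E$ is $\emptyset$-definable, $\sigma$ permutes the $E$-classes, and since $\sigma$ sends some element of $A \subseteq C$ to an element of $B \subseteq C$, it must fix $C$ setwise; then $\sigma \upharpoonright_C$ is an automorphism of the induced structure extending $f$.

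For primitivity, suppose toward contradiction that $\varphi(x, y)$ defines a non-trivial $\emptyset$-definable equivalence relation $E'$ on the induced structure on $C$, with $\varphi$ a formula in the language $\set{<_1, <_2, <_3}$. Define a binary relation $E''$ on $\Gamma$ by $x E'' y \iff x E y \wedge \varphi(x, y)$. By homogeneity of $\Gamma$, any $E$-class is $\mathrm{Aut}(\Gamma)$-isomorphic to $C$, so $\varphi$ defines an equivalence relation of the same shape on each $E$-class; hence $E''$ is a $\emptyset$-definable equivalence relation on $\Gamma$ lying strictly between equality and $E$, contradicting the minimality of $E$.

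With the induced structure on $C$ established as a primitive homogeneous 3-dimensional permutation structure (possibly realizing fewer than all 2-types, in which case a lower-dimensional instance of Theorem \ref{theorem:primclass}, or the classification in \cite{Cameron}, still applies), the desired conclusion follows directly: $C$ is generic modulo the agreement of certain orders up to reversal. The main point requiring care, minor as it is, is the primitivity transfer step — pulling a definable equivalence relation on $C$ back to $\Gamma$ and verifying that the result is well-defined and strictly between equality and $E$ — which is routine given the transitive action of $\mathrm{Aut}(\Gamma)$ on $E$-classes.
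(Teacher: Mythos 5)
Your proposal is correct and takes essentially the same route as the paper, which offers no separate argument and simply states that the lemma is immediate from Theorem \ref{theorem:primclass}; you have supplied the routine details that make it immediate, namely that the induced structure on $C$ is homogeneous (by restricting automorphisms of $\Gamma$ that fix $C$ setwise) and primitive (since a non-trivial invariant equivalence relation on $C$ would spread by homogeneity to all $E$-classes and yield a $\emptyset$-definable equivalence relation on $\Gamma$ strictly between equality and $E$, contradicting minimality). The only point worth polishing is to phrase the pullback in terms of $\mathrm{Aut}$-invariance (or a quantifier-free $\varphi$, which homogeneity of $C$ guarantees exists) rather than evaluating an arbitrary formula $\varphi$ of $C$ inside $\Gamma$, but this is cosmetic.
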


We will frequently use the following characterization of genericity.

\begin{proposition}
Let $\Gamma$ be a homogeneous $n$-dimensional permutation structure. Then $\Gamma$ is generic iff for any non-empty open intervals $I_i$ in each order, $<_i$, $\cap_{i=1}^n I_i \neq \emptyset$.
\end{proposition}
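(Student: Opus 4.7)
The plan is to translate both sides of the biconditional into the same statement about one-point extensions in the class of $n$-tuples of linear orders. Given a finite $A \subseteq \Gamma$, a one-point extension $A \cup \{b\}$ in the class is determined by specifying, for each order $<_i$, an open interval $I_i$ in $<_i$ (with endpoints in $A \cup \{\pm\infty\}$) in which to place $b$; realizing this extension in $\Gamma$ is then exactly finding a point of $\Gamma$ in $\bigcap_i I_i$. With this translation both directions reduce to the extension property characterization of \fraisse limits, using the homogeneity of $\Gamma$ that is already assumed.

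For the forward direction, take $\Gamma$ generic and fix non-empty open intervals $I_1,\dots,I_n$. I would collect the finite endpoints of the $I_i$ into $A \subseteq \Gamma$ and build the abstract one-point extension $A \cup \{b\}$ in the class by placing $b$ anywhere inside $I_i$ in each order independently; this is consistent because each $I_i$ is non-empty and the $<_i$-positions of a new point may be chosen freely in the full class. Genericity then realizes this extension in $\Gamma$ over $A$, yielding a point of $\bigcap_i I_i$. For the reverse direction, assume the intersection property; since $\Gamma$ is homogeneous, it suffices to verify the one-point extension property relative to the full class. Given a finite $A \subseteq \Gamma$ and a candidate extension $A \cup \{b\}$, the position of $b$ in each order determines an open interval $I_i$ with endpoints in $A \cup \{\pm\infty\}$, and the intersection property supplies a realization $b' \in \bigcap_i I_i$ in $\Gamma$ provided each $I_i$ is non-empty.

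The main obstacle is ensuring non-emptiness of the individual intervals $I_i$ arising from a one-point extension, equivalently that each $<_i$ restricts to a dense linear order without endpoints on $\Gamma$. I would secure this as a preliminary step by applying the intersection property in the degenerate configuration where all but one coordinate is taken to be the whole of $\Gamma$ (so the hypothesis becomes non-emptiness of a single interval) and invoking homogeneity of $\Gamma$ to propagate density and endpointlessness from any single witness to every interval in $<_i$. With each $<_i$ confirmed to be a DLO on $\Gamma$, the reverse direction closes without further work, and iterating the one-point extension realizes every finite configuration from the class.
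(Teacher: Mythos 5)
Your overall route is the same as the paper's: both proofs reduce genericity to a one-point extension property over a finite set $A$, observe that the restriction of a quantifier-free $1$-type to a single order $<_i$ is an open interval with endpoints in $A \cup \{\pm\infty\}$, and identify realizing the type with finding a point of $\Gamma$ in $\bigcap_{i} I_i$. On that core the two arguments agree.

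The one place you go beyond the paper --- securing that each interval arising from an abstract one-point extension is non-empty, i.e. that each $<_i$ is dense without endpoints on $\Gamma$ --- is also the one place your argument does not work as written. Applying the intersection property ``in the degenerate configuration where all but one coordinate is the whole of $\Gamma$'' is circular: the hypothesis of the intersection property already requires the remaining interval $I_j$ to be non-empty, which is exactly what you are trying to establish. Moreover, homogeneity only transports a witness in $(a,b)_{<_i}$ to intervals $(a',b')_{<_i}$ with $tp(a',b') = tp(a,b)$; it does not propagate density across distinct $2$-types, so ``a single witness'' does not suffice. The fact you need is true, but for a different reason: in an infinite homogeneous permutation structure no element has a $<_i$-successor. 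If some element did, then by homogeneity every element would, the successor map $s$ would be automorphism-equivariant, so $tp(x, s(x))$ would be a fixed $2$-type $q$; transitivity in each order then forces $tp(x, s^2(x)) = q$ as well, so some automorphism fixes $x$ and sends $s(x)$ to $s^2(x) \neq s(x)$ while necessarily fixing $s(x)$ --- a contradiction. (Endpointlessness is immediate since all singletons are conjugate.) With that substitution your proof closes; the paper's own proof passes over this point without comment.
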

\begin{proof}
Genericity of $\Gamma$ is equivalent to the following one-point extension property: given a type $p$ over a finite set $A$ not realized in $A$, $p$ is realized iff its restriction to each individual order is realized by an element not in $A$. The restriction of $p$ to an order $<_i$ specifies a $<_i$-interval with endpoints in $A \cup \set{\pm \infty}$, which is open since $p$ is not realized in $A$. This interval is non-empty exactly when the restriction has a realization not in $A$.
\end{proof}

\subsection{Convex Closure}
In this section, we show $E$-classes are $<_1$-dense in their $<_1$-convex closures, and the $<_1$-convex closure of $E$ is an equivalence relation. The arguments we present depend heavily on the type structure in the case $k=3$, although in a few cases a step where our argument depends on $k=3$ could have been carried out in greater generality.

\begin{lemma} \label{lemma:norev}
Let $E$ be a minimal non-trivial $\emptyset$-definable equivalence relation in a homogeneous finite-dimensional permutation structure, and $C, C'$ be distinct $E$-classes. Then no 2-type $p$ is realized in both $C \times C'$ and $C' \times C$.
\end{lemma}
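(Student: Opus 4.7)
My plan is a proof by contradiction. Suppose some 2-type $p$ is realized by both a pair $(c,c') \in C \times C'$ and a pair $(u,v) \in C' \times C$. Since the 1-type over $\emptyset$ is unique in a homogeneous finite-dimensional permutation structure, homogeneity supplies $\phi \in \mathrm{Aut}(\Gamma)$ with $\phi(c) = u$ and $\phi(c') = v$. As $E$ is $\emptyset$-definable, $\phi$ permutes $E$-classes, and the placements $\phi(c) \in C'$, $\phi(c') \in C$ force $\phi(C) = C'$ and $\phi(C') = C$; in other words, $\phi$ is an automorphism of $\Gamma$ that swaps the two $E$-classes.

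The next step is to use this swap to derive a contradiction. The quickest case is when $C$ and $C'$ are unmixed in some order $<_i$: if, say, $C <_i C'$, then for any $a \in C$ and $b \in C'$ we have $a <_i b$, while $\phi(a) \in C'$, $\phi(b) \in C$ combined with $C <_i C'$ force $\phi(a) >_i \phi(b)$, contradicting that $\phi$ preserves $<_i$. So in what remains $C$ and $C'$ must interleave in every one of the $<_i$.

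The main obstacle is ruling out this totally interleaved configuration, and this is where I expect the argument to depend most heavily on the $k=3$ setting foreshadowed in the section preamble. My plan is first to observe that the set of 2-types realized between $C$ and $C'$ must be closed under $q \mapsto q^{\mathrm{opp}}$, since $\phi$ sends any realization of $q$ in $C \times C'$ to a realization of $q$ in $C' \times C$, i.e.\ a realization of $q^{\mathrm{opp}}$ in $C \times C'$. Then, exploiting the primitive generic structure on each $E$-class guaranteed by Lemma~\ref{lemma:Cgeneric}, I would build small finite configurations inside $C \cup C'$ in which a single element of $C'$ is bracketed in every order by elements of $C$ realizing opposing types, and iterate the $\phi$-swap on such a configuration. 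The expectation is that the preservation of all orders by $\phi$ combined with these bracketing constraints will force an internal 2-type on $C$ to be equal to its own reversal, which is impossible since each $<_i$ is antisymmetric on distinct points. This last step is the delicate one and is where I anticipate the bulk of the technical work to lie.
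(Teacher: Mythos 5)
There is a genuine gap: the heart of your argument --- ruling out the configuration in which $C$ and $C'$ interleave in every order --- is never actually carried out. You describe only an ``expectation'' that bracketing configurations together with the swap $\phi$ will force some 2-type to equal its own reversal, and you yourself locate ``the bulk of the technical work'' in that unexecuted step. Your preliminary observations (that $\phi$ must swap the two classes, that the classes interleave in every order, and that the set of cross-types is closed under reversal) are correct but do not by themselves approach a contradiction. Worse, the route you sketch cannot prove the lemma as stated: Lemma \ref{lemma:norev} is asserted for homogeneous finite-dimensional permutation structures of arbitrary dimension, whereas you propose to lean on Lemma \ref{lemma:Cgeneric} (a statement about the 3-dimensional case) and on the limited type structure for $k=3$. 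Any argument tied to those ingredients yields at most a special case.

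The missing idea is short and dimension-free, and it uses the second coordinates rather than global properties of $\phi$. Take $a,b\in C$ and $a',b'\in C'$ with $a\ra{p}b'$ and $a'\ra{p}b$, and set $q=tp(b,b')$. Transitivity along $a'\ra{p}b\ra{q}b'$ shows $p\neq q$, since $p=q$ would force $a'\ra{p}b'$ and hence place $p$ inside $E$. Now apply homogeneity to the single isomorphism of pairs $(a',b)\cong(a,b')$: the statement ``there exists $x$ with $xEa'$ and $b\ra{q}x$'' is true of $(a',b)$ (witnessed by $x=b'$), so the corresponding statement for $(a,b')$ gives some $c\in a/E=C$ with $b'\ra{q}c$. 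Then $b\ra{q}b'\ra{q}c$ yields $b\ra{q}c$ by transitivity, so $q$ is realized by an $E$-related pair ($b,c\in C$) and also by a non-$E$-related pair ($b,b'$); since $E$ is $\emptyset$-definable in a homogeneous structure, it is a union of 2-types, and this is the contradiction. No genericity of $E$-classes, no case split on whether the classes are separated in some order, and no restriction to $k=3$ is needed.
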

\begin{proof}
Let $a, b \in C$, $a', b' \in C'$, such that $a \ra{p} b'$ and $a' \ra{p} b$. Let $b \ra{q} b'$, and note that $p \neq q$, since otherwise transitivity would force $a' \ra{p} b'$ and so $p \subset E$. By homogeneity, there is an automorphism sending $(a, b')$ to $(a', b)$. Thus there must be some $c \in C$ such that $b' \ra q c$. But then by transitivity $b \ra q c$, which is a contradiction.
\end{proof}

\begin{definition}
Let $\widetilde E$ be the $<_1$-convex closure of $E$, i.e. $a \widetilde E b$ if there exists a $c$ such that $aEc$ and $b$ is $<_1$-between $a$ and $c$. Given an $E$-class $C$, $\widetilde C$ is the $<_1$-convex closure of $C$.
\end{definition}

\begin{notation*}
For the rest of \S 4, we fix notation, by reversing and switching orders as needed, so that the 2-type $\ra{123}$ is contained in $E$, and if $E$ contains another 2-type besides $\ra{123}$ and its opposite then it contains $\ra{23}$.
\end{notation*}

\begin{lemma}\label{lemma:3cross}
Let $E$ be a minimal non-trivial $\emptyset$-definable equivalence relation in a homogeneous 3-dimensional permutation structure, and $C$ be an $E$-class. Let $a_1, a_2 \in C$, $b \not\in C$, and $a_1 <_1 b <_1 a_2$. Then $tp(a_1, b) = \ra{12}$, $ tp(b, a_2) = \ra{13}$, or $tp(a_1, b) = \ra{13}, tp(b, a_2) = \ra{12}$.
\end{lemma}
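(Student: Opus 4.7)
The strategy is to enumerate the candidate pairs $(tp(a_1,b), tp(b,a_2))$ and rule out everything except the two pairs in the conclusion. Since $a_1 <_1 b <_1 a_2$ forces the $<_1$-coordinate of each type to be forward, and $\ra{123} \subseteq E$ while $b \notin C$, each of the two types lies in $\set{\ra{12}, \ra{13}, \ra{1}}$. Applying Lemma~\ref{lemma:norev} eliminates the diagonal pairs: if $tp(a_1,b) = tp(b,a_2) = p$, then $p$ is realized from $C$ to $[b]_E$ via $a_1 \to b$ and from $[b]_E$ to $C$ via $b \to a_2$, contradicting that lemma. This leaves six pairs, four of them ``bad'' in the sense that exactly one entry equals $\ra{1}$.

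Next I would classify the 2-types contained in $E$. By Lemma~\ref{lemma:Cgeneric} the induced structure on $C$ is generic modulo the agreement of some orders, and together with the notational convention fixed above this leaves two possibilities: (i) $E = \set{\ra{\emptyset}, \ra{123}}$, so all three orders agree on $E$-classes, or (ii) $E = \set{\ra{\emptyset}, \ra{123}, \ra{1}, \ra{23}}$, so $<_2 = <_3$ inside each $E$-class. The remaining possibility, that $E$ contain all eight 2-types, is excluded because it would make $E$ universal and hence trivial. For each bad pair, transitivity of the orders through $b$ pins down two of the three coordinates of $tp(a_1,a_2)$, producing $tp(a_1,a_2) \in \set{p, \ra{1}}$ for some $p \in \set{\ra{12}, \ra{13}}$; since $tp(a_1,a_2)$ must lie in $E$, case (i) gives an immediate contradiction and case (ii) forces $tp(a_1,a_2) = \ra{1}$.

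The substantive step is case (ii), handled by a short amalgamation argument inside $C$. The idea is to choose $a_3 \in C$ whose $<_1$- and $<_2$-positions, together with the forced $<_2 = <_3$ agreement on $C$, let all three linear-order chains through $a_1, a_2$ bound $a_3$ on the same side of $b$. Concretely, for $(\ra{12}, \ra{1})$ and $(\ra{13}, \ra{1})$ I would take $a_3 \in C$ with $a_3 <_1 a_1$ and $a_3 <_2 a_2$ (so $a_3 <_3 a_2$ by the agreement); chaining through $a_1, a_2$ in each order then forces $a_3 <_i b$ for every $i$, giving $tp(a_3, b) = \ra{123}$. For the two bad pairs with $\ra{1}$ on the left, the reflected choice $a_3 >_1 a_2$ and $a_3 >_2 a_1$ symmetrically forces $tp(a_3, b) = \ra{\emptyset}$. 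In every case $tp(a_3, b) \in E$, so $a_3\, E\, b$; but $a_3 \in C$ while $b \notin C$, contradicting transitivity of $E$. The only routine verifications are that each 3-point configuration on $\set{a_1, a_2, a_3}$ lies in the age of $C$ (all internal 2-types are in $E$ and respect $<_2 = <_3$) and that the chains of inequalities really do determine every coordinate of $tp(a_3, b)$; the main obstacle is simply the bookkeeping across the four bad pairs.
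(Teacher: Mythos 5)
Your proof is correct and follows essentially the same route as the paper: the same case division according to whether $E$ contains two or four 2-types, the same use of Lemma \ref{lemma:norev} to eliminate the diagonal pairs, and the same transitivity computations through $b$. The only real difference is that your case (ii) does more work than necessary: there $\ra{1} = \la{23}$ is itself contained in $E$, so a ``bad'' pair would immediately force $b \in C$, making the auxiliary point $a_3$ and the amalgamation argument superfluous (the paper simply notes that only four 2-types remain outside $E$ and applies Lemma \ref{lemma:norev}).
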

\begin{proof}
If $E$ contains $\ra{123}$ and $\ra{23}$, the conclusion follows by Lemma \ref{lemma:norev} and the fact that only 4 2-types remain.

Otherwise, we have that $a_1 \ra{123} a_2$. Since we cannot have $a_1 \ra{123} b$, there is some $i$ such that $b <_i a_1 <_i a_2$, and so $b \ra{1i} a_2$. Thus, there is a unique $j$ such that $a_2 <_j b$, so $a_1 <_j b$. Thus $a_1 \ra{1j} b$ and $b \ra{1i} a_2$.
\end{proof}

\begin{corollary}\label{cor:contain}
Let $E$ be a minimal non-trivial $\emptyset$-definable equivalence relation in a homogeneous 3-dimensional permutation structure, and $C$ be an $E$-class. Suppose $b \in \widetilde C \bs C$. Then $b/E \subset \widetilde C$.
\end{corollary}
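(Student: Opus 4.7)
The plan is to suppose for contradiction that some $b' \in b/E$ lies outside $\widetilde C$, and derive a contradiction via a double application of Lemma \ref{lemma:3cross} followed by a homogeneity argument.

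First, I would apply Lemma \ref{lemma:3cross} to the given $b$: choose $a_1, a_2 \in C$ with $a_1 <_1 b <_1 a_2$, and after possibly swapping $<_2$ with $<_3$ assume we are in the first branch, so $tp(a_1, b) = \ra{12}$ and $tp(b, a_2) = \ra{13}$. Writing $C' := b/E \neq C$, the set $\widetilde C$ is a $<_1$-interval, so the assumption $b' \notin \widetilde C$ places $b'$ either $<_1$-above or $<_1$-below every element of $C$; the two sub-cases are symmetric, so I will treat $b' >_1 a$ for all $a \in C$. In particular $b <_1 a_2 <_1 b'$.

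The key observation is that $a_2 \in C$ lies strictly $<_1$-between the two $E$-related points $b, b' \in C'$ while $a_2 \notin C'$, so $a_2 \in \widetilde{C'} \setminus C'$. I would therefore apply Lemma \ref{lemma:3cross} a second time, now with the $E$-class $C'$ in place of $C$ and $a_2$ playing the role of the exterior point. That lemma forces $(tp(b, a_2), tp(a_2, b'))$ to be $(\ra{12}, \ra{13})$ or $(\ra{13}, \ra{12})$, and since $tp(b, a_2) = \ra{13}$ is already pinned down, only the second possibility remains, giving $tp(a_2, b') = \ra{12}$.

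At this point the pairs $(a_1, b)$ and $(a_2, b')$ carry the common 2-type $\ra{12}$, so the partial isomorphism $(a_1, b) \mapsto (a_2, b')$ extends by homogeneity to an automorphism $\sigma$ of $\Gamma$. Since $\sigma$ preserves $E$ and sends $a_1 \in C$ to $a_2 \in C$, it fixes $C$ setwise, and since $\sigma$ preserves $<_1$ it also fixes the $<_1$-convex closure $\widetilde C$ setwise. Then $b \in \widetilde C$ yields $b' = \sigma(b) \in \widetilde C$, contradicting the choice of $b'$. The symmetric sub-case $b' <_1 a$ for all $a \in C$ is handled by the mirror argument applied to the pair $(a_2, b)$ of type $\la{13}$, where Lemma \ref{lemma:3cross} applied at $a_1 \in \widetilde{C'} \setminus C'$ forces $tp(a_1, b') = \la{13}$.

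The step I expect to be the main conceptual obstacle is the dual application of Lemma \ref{lemma:3cross} in the middle paragraph. The lemma was originally invoked for the class $C$, but its statement is indifferent to which $E$-class is singled out, and the essential geometric point is that precisely when $b'$ lies on one $<_1$-side of $C$ some element of $C$ is forced inside $\widetilde{C'} \setminus C'$. That observation delivers the matching 2-type on $(a_2, b')$; after it, the homogeneity endgame is immediate.
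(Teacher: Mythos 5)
Your proof is correct and follows essentially the same route as the paper: both apply Lemma \ref{lemma:3cross} first to $b$ over $C$ and then a second time to the pair $b, b'$ in $C' = b/E$ with $a_2$ (resp.\ $a_1$) as the exterior point, and both finish with the automorphism extending $(a_1,b) \mapsto (a_2,b')$. The only cosmetic difference is that you phrase the endgame as a contradiction via $\sigma$ fixing $\widetilde C$ setwise, while the paper directly observes that $b'$ lies $<_1$-between $a_2$ and $\phi(a_2) \in C$; these are equivalent.
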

\begin{proof}
Take $a_1, a_2 \in C$ such that $a_1 <_1 b <_1 a_2$. By Lemma \ref{lemma:3cross}, we may suppose without loss of generality that $tp(a_1, b) = \ra{12}, tp(b, a_2) = \ra{13}$.

Take $b' \in b/E$. If $b' >_1 a_2$, then $b <_1 a_2 <_1 b'$ and $tp(b, a_2) = \ra{13}$, so by Lemma \ref{lemma:3cross}, $tp(a_2, b') = \ra{12}$. By homogeneity, there is an automorphism $\phi$ sending $(a_1, b)$ to $(a_2, b')$, so $b'$ is $<_1$-between $a_2$ and $\phi(a_2)$. The case where $b' <_1 a_1$ is nearly identical.
\end{proof}

\begin{corollary}\label{cor:samecross}
Let $E$ be a minimal non-trivial $\emptyset$-definable equivalence relation in a homogeneous 3-dimensional permutation structure, and $C$ be an $E$-class. Let $a, a' \in C$, with $a' \ra{123} a$. For any $b \not\in C$, if $b <_1 a'$ or $a <_1 b$, then $tp(a, b) = tp(a', b)$
\end{corollary}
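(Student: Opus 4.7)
The plan is to split the proof on whether there is an element of $C$ that $<_1$-separates $b$ from the interval $[a',a]$: namely some $a^\ast \in C$ with $a^\ast <_1 b$ in the hypothesis $b <_1 a'$, or $b <_1 a^\ast$ in the hypothesis $a <_1 b$.

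If such an $a^\ast$ exists, then working in the first configuration (the other is symmetric) both triples $(a^\ast, b, a')$ and $(a^\ast, b, a)$ satisfy the hypothesis of Lemma \ref{lemma:3cross}, so
\[
\{tp(a^\ast, b),\ tp(b, a')\} = \{\ra{12}, \ra{13}\} = \{tp(a^\ast, b),\ tp(b, a)\}.
\]
Since $tp(a^\ast, b)$ is one fixed member of $\{\ra{12}, \ra{13}\}$, both $tp(b, a')$ and $tp(b, a)$ must equal the other member, so they coincide and the conclusion follows.

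Suppose instead that no such $a^\ast$ exists; then $b$ lies $<_1$-entirely outside $C$, and in particular $b \notin \widetilde C$. Assume for contradiction that $tp(b, a') \neq tp(b, a)$. Since $a' \ra{123} a$, these types agree in the $<_1$-coordinate and differ in some coordinate $i \in \{2,3\}$, equivalently $a' <_i b <_i a$ for that $i$. A short case analysis, using that $tp(b, a) = \ra{123}$ would force $bEa$, pins the possibilities down to $tp(b, a') = \la{23}$ and $tp(b, a) \in \{\ra{12}, \ra{13}\}$. Under the convention of \S 4, if $E$ contains a 2-type besides $\ra{123}$ and $\la{123}$ then it contains $\la{23}$, in which case $tp(b, a') = \la{23}$ forces $bEa'$ and contradicts $b \notin C$.

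The remaining case is that $E$ contains only $\ra{123}$ and $\la{123}$ among its nontrivial 2-types, so the induced order on $C$ collapses to a single dense linear order. Corollary \ref{cor:contain} gives $b/E \cap \widetilde C = \emptyset$, so each element of $b/E$ lies entirely $<_1$-below or entirely $<_1$-above $C$. If any $b^\ast \in b/E$ lies $<_1$-above $C$, then $b <_1 b^\ast$ together with $tp(b, b^\ast) \in E$ forces $b \ra{123} b^\ast$; one then computes directly that for any $c \in T_2(b)$ (nonempty since $a' \in T_2(b)$) one has $tp(b^\ast, c) = \la{123} \in E$, so $b^\ast E c$, contradicting $c \in C \neq b/E$. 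Hence $b/E$ lies entirely $<_1$-below $C$. The hardest step, and the main obstacle of the argument, is to convert this rigid ``$b/E <_1 C$'' configuration into a contradiction with the standing Case 2 assumption that $E$ is globally not $<_1$-convex; this seems to require comparing the pair type realized between $(C, b/E)$ (forced by Lemma \ref{lemma:norev} to lie in $\{\la{23}, \ra{12}\}$ and hence to be $<_1$-convexly separated) against the pair type of a $<_1$-interleaved pair of $E$-classes, whose existence is forced by the failure of $<_1$-convexity, to extract an inconsistency in the homogeneous quotient $\Gamma/E$ using the minimality of $E$.
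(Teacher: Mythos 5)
Your sub-case (i) is fine, and the reduction in sub-case (ii) to the configuration $tp(b,a')=\la{23}$, $tp(b,a)\in\{\ra{12},\ra{13}\}$ with $b/E$ lying entirely $<_1$-below $C$ is also correct (as is the quick dispatch of the situation $\ra{23}\subset E$). But the argument then stops: your last paragraph describes a hoped-for contradiction (``this seems to require comparing the pair type\dots to extract an inconsistency'') rather than deriving one, and you yourself flag it as the main obstacle. That final case is therefore a genuine gap, and it is not a routine one to fill by the route you sketch --- Lemma \ref{lemma:norev} only tells you that a type and its reverse cannot both occur between $C$ and $b/E$, and nothing in what you have established so far rules out two $E$-classes that are $<_1$-separated yet realize two distinct types between them (indeed, at this stage of the paper $<_2$- and $<_3$-convexity of $E$-classes has not yet been proved, so the fact that your configuration makes $b$ lie $<_2$- or $<_3$-between $a'$ and $a$ is not yet a contradiction either).

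The missing idea, which is exactly how the paper closes this case, is that $\ra{12}$ and $\ra{13}$ are \emph{crossing types}: by the standing Case 2 assumption there is a genuine crossing configuration, Lemma \ref{lemma:3cross} identifies its types as $\ra{12}$ and $\ra{13}$, and homogeneity (transitivity on pairs realizing a fixed 2-type) then transports the witness on the far side to \emph{every} pair realizing such a type across distinct $E$-classes. Applied to your pair $(b,a)$ with $tp(b,a)\in\{\ra{12},\ra{13}\}$ and $b<_1 a$, this produces an element of $C$ lying $<_1$-below $b$, contradicting your case hypothesis that $b/E <_1 C$; so the final case collapses once this observation is in place. Note also that the paper's own proof avoids your separated/non-separated dichotomy entirely: it splits on $tp(a,b)$, handles $tp(a,b)=\ra{1x}$ by bare transitivity (since $\ra{123}$ is excluded, $a'<_1 b$ and $a'<_x b$ already force $tp(a',b)=\ra{1x}$), and uses the crossing-type observation only for $tp(a,b)=\la{23}$. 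You should restructure along those lines, or at minimum supply the homogeneity transport argument to finish your last case.
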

\begin{proof}
We only treat the case $a <_1 b$, since the other case is similar.

Suppose $tp(a, b) = \ra{1x}$. By transitivity, $a' <_1 b$, $a' <_x b$. Since we cannot have $tp(a', b) = \ra{123}$, we are done. 

Now suppose $tp(b, a) = \ra{23}$. By transitivity, $a' <_1 b$. However, we cannot have $tp(a', b) = \ra{12}$ or $\ra{13}$, since by Lemma \ref{lemma:3cross} there would be some $a'' \in C$ such that $a'' >_1 b$, and then applying Lemma \ref{lemma:3cross} again, we would have that $tp(a, b)$ would also be $\ra{12}$ or $\ra{13}$.
\end{proof}

\begin{corollary} \label{cor:min}
Let $E$ be a minimal non-trivial $\emptyset$-definable equivalence relation in a homogeneous 3-dimensional permutation structure. Then any non-trivial $\emptyset$-definable equivalence relation contains $E$.
\end{corollary}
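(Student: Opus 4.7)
The plan is to use the standard approach: for any non-trivial $\emptyset$-definable equivalence relation $F$, the intersection $E \cap F$ is a $\emptyset$-definable sub-equivalence of $E$, so by minimality of $E$ either $E \cap F = E$ (giving $E \subseteq F$ immediately) or $E \cap F$ is equality. The real work is ruling out the second possibility, which I would assume for contradiction.

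The key observation is combinatorial. Encoding each $2$-type as the sign vector in $\{+,-\}^3$ recording which of $<_1,<_2,<_3$ hold, the standing notational convention places $\ra{123}=(+,+,+)$ and $\la{123}=(-,-,-)$ in $E$, and these are the only vectors with all coordinates of the same sign. Every other $2$-type has both a $+$ and a $-$ coordinate, hence lies at Hamming distance exactly $1$ from one of $\ra{123}$ or $\la{123}$. So, picking any $2$-type $q$ realized inside $F$---which under our assumption must avoid $E$-types and in particular cannot be $\ra{123}$ or $\la{123}$---I can pair it with some $p\in\{\ra{123},\la{123}\}$ at Hamming distance $1$ from $q$.

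Next I fix witnesses $a,b$ with $tp(a,b)=q$. By Lemma~\ref{lemma:Cgeneric} the $E$-class of $a$ is generic, and in particular $p$ is realized there (the $2$-types $\ra{123}$ and $\la{123}$ are never killed by order-agreement on $E$-classes), so I pick $c$ in this class with $tp(c,a)=p$; note $c\neq a$. Since $p$ and $q$ agree on two of the three coordinates, transitivity of the corresponding two orders pins down the value of $tp(c,b)$ on those coordinates, while the single coordinate on which $p$ and $q$ differ is unconstrained. Thus $tp(c,b)\in\{p,q\}$, and this is the step that really carries the proof---the main piece of insight that must be gotten right.

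Finally, both possibilities yield a contradiction. If $tp(c,b)=p$, then $cEb$; combined with $cEa$ this forces $aEb$, contradicting $q\notin E$. If $tp(c,b)=q$, then $cFb$ and $aFb$ give $cFa$, and combined with $cEa$ and $c\neq a$ this violates $E\cap F=\text{equality}$. Once the Hamming distance observation is in place, the dichotomy closes out immediately, and no further case analysis depending on the exact shape of $S_E$ or $S_F$ is needed.
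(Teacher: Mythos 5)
Your proof is correct. It rests on the same engine as the paper's: attach an auxiliary point to one endpoint of a realized $F$-pair via a $2$-type contained in $E$, use transitivity on the coordinates where the two types agree, and observe that the completion landing in $E$ is impossible. The paper does this by citing Corollary~\ref{cor:samecross} (take $b'$ with $b \ra{123} b'$ and conclude $tp(a,b')=tp(a,b)$, so the relation generated by $tp(a,b)$ contains $\ra{123}$), then runs a second, separate argument in case $\ra{23}\subseteq E$. Your packaging differs in two ways that are worth noting. First, the up-front reduction via minimality to the dichotomy ``$E\subseteq F$ or $E\cap F$ is equality'' lets you kill \emph{both} branches of $tp(c,b)\in\{p,q\}$ outright (one contradicts $q\notin E$, the other contradicts triviality of $E\cap F$), so you never need to pin down $tp(c,b)$ exactly, and you never need the separate $\ra{23}$ case. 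Second, by choosing $p$ to be whichever of $\ra{123},\la{123}$ is at Hamming distance $1$ from $q$ --- and every non-constant sign vector in $\{+,-\}^3$ is at distance $1$ from one of the two constant ones --- your transitivity step is uniform, whereas the paper's Corollary~\ref{cor:samecross} fixes the $E$-edge to be $\ra{123}$ and therefore needs Lemma~\ref{lemma:3cross} to handle the type at Hamming distance $2$. The net effect is a self-contained proof that bypasses Corollary~\ref{cor:samecross} and Lemma~\ref{lemma:3cross}; the cost is that it establishes only the corollary as stated rather than the slightly finer information (which $2$-types each individual type generates) that the paper's route produces along the way. One cosmetic remark: the appeal to Lemma~\ref{lemma:Cgeneric} is unnecessary --- you only need that $p$ is realized, which follows from $\ra{123}\subseteq E$, non-triviality of $E$, and homogeneity.
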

\begin{proof}
Consider the equivalence relation generated by a 2-type $p$, and without loss of generality assume $<_1$ holds in $p$. Given $a, b$ such that $a \ra{p} b$, find $b'$ such that $b \ra{123} b'$. By Corollary \ref{cor:samecross}, $a \ra{p} b'$, so $p$ generates $\ra{123}$. 

If $\ra{23} \subset E$, so $p = \ra{1x}$, then run the above argument with $b \ra{23} b'$. By transitivity, $a <_x b'$, so $a \ra{1x} b'$, and $\ra{23}$ is generated by $p$ as well.
\end{proof}

We note that much of the proof of the following lemma is concerned with ruling out a plausible configuration in which given $E$-classes $C, C_1$ such that $C_1 \subset \widetilde C$, then $C_1$ defines a non-trivial $<_1$-Dedekind cut in $C$. Although the type structure is too constrained to allow this with 3 orders, it seems possible that it may occur with more orders.

\begin{lemma} \label{lemma:dense}
Let $E$ be a minimal non-trivial $\emptyset$-definable equivalence relation in a homogeneous 3-dimensional permutation structure, and $C$ be an $E$-class. Then $C$ is $<_1$-dense in $\widetilde C$.
\end{lemma}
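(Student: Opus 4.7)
The plan is to derive a contradiction from the failure of density by exploiting homogeneity of $\Gamma$. Specifically, I will produce two pairs of elements sharing the same quantifier-free $2$-type over $\emptyset$ that are nevertheless distinguished by a $\emptyset$-definable binary formula, which is impossible in a homogeneous structure.

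Assume for contradiction that $C$ is not $<_1$-dense in $\widetilde C$. After a brief case analysis---handling the ``successor'' dual by swapping $<_2$ and $<_3$, and ruling out the ``irrational cut'' subcase, in which both endpoints of the density-failing $<_1$-interval lie in $\widetilde C \setminus C$ and realise the same Dedekind cut of $C$, via a homogeneity/dispersion argument---we obtain $a_1 \in C$ and $b \in \widetilde C \setminus C$ with $a_1 <_1 b$ and no $c \in C$ strictly $<_1$-between them. By Lemma \ref{lemma:3cross} we may assume $tp(a_1, b) = \ra{12}$. Since $C$ is itself a generic dense linear structure (Lemma \ref{lemma:Cgeneric}), there exists $c \in C$ with $c \ra{123} a_1$, and Corollary \ref{cor:samecross} applied with $a' := c$, $a := a_1$, and outside point $b$ satisfying $a_1 <_1 b$ yields $tp(c, b) = tp(a_1, b) = \ra{12}$. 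Hence the pairs $(a_1, b)$ and $(c, b)$ share the same quantifier-free $2$-type over $\emptyset$, namely ``$\ra{12}$ together with $\neg(xEy)$''.

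The contradiction comes from the $\emptyset$-definable binary formula
\[
P(x, y) \;:=\; \neg(xEy) \;\wedge\; x <_1 y \;\wedge\; \neg \exists z\,(xEz \wedge x <_1 z <_1 y),
\]
expressing ``$x$ is the $<_1$-predecessor of $y$ within $x$'s own $E$-class.'' The density-failure hypothesis directly gives $P(a_1, b)$, while $\neg P(c, b)$ follows from taking $z := a_1$, which satisfies $cEa_1$ and $c <_1 a_1 <_1 b$. Homogeneity of $\Gamma$ would force $(a_1, b)$ and $(c, b)$ to lie in a common $\mathrm{Aut}(\Gamma)$-orbit, hence to agree on every $\emptyset$-definable property, contradicting the discrepancy on $P$.

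The main obstacle is the reduction step, and in particular the irrational-cut subcase: one must show that if some density failure has both endpoints in $\widetilde C \setminus C$ with no $C$-neighbour, then homogeneity implies that the corresponding ``same-hole'' $2$-type either is realised by every pair in $\widetilde C \setminus C$ (forcing $\widetilde C \setminus C$ into a single $<_1$-interval, contradicting the definition of the $<_1$-convex closure and the presence of $C$-points at arbitrarily large and small $<_1$-values within $\widetilde C$) or by none (in which case each ``hole'' of $C$ contains at most one non-$C$ element and no density failure arises). Once this reduction is in place, the rigidity supplied by Corollary \ref{cor:samecross} and the homogeneity argument above complete the proof.
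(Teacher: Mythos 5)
Your reduction collapses the lemma to the wrong case. In the paper's proof the configuration you reduce to --- $a_1\in C$ and $b\in\widetilde C\setminus C$ with no point of $C$ strictly $<_1$-between them --- is disposed of in one sentence: a cut $\widehat b(C)$ defined by a point outside $C$ can have no maximal or minimal element, since by Lemma \ref{lemma:3cross} the elements of $C$ realize only two types over $b$, and a definable extremal element would force a third. Your $P(x,y)$ argument is a correct (if slightly re-dressed) version of this easy step. The real content of the lemma is the case you label ``irrational cut'' and dismiss in a parenthetical: both endpoints of the density failure lie in $\widetilde C\setminus C$ and define the same cut in $C$ with no $C$-neighbour on either side. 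Your proposed dichotomy --- that the ``same-hole'' relation is realised by \emph{every} pair in $\widetilde C\setminus C$ or by \emph{none} --- does not follow from homogeneity. Homogeneity only makes ``defining the same cut'' constant on each orbit of pairs, i.e.\ on each quantifier-free $2$-type, and $\widetilde C\setminus C$ carries several such types (pairs inside one $E$-class related by $\ra{123}$, pairs related by $\ra{23}$, pairs in distinct $E$-classes, \dots). The first horn of your dichotomy therefore only yields that each $E$-class $C_1\subset\widetilde C$ sits inside a single cut of $C$, with distinct classes possibly in distinct cuts; this does not force $\widetilde C\setminus C$ into a single $<_1$-interval and produces no contradiction by itself. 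Indeed the introduction flags exactly this configuration as ``a plausible exceptional imprimitive structure'' that must be explicitly refuted.

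The paper's proof spends essentially all of its effort on that configuration: Claims 1 and 2 normalize to two points $a\ra{123}b$ in a single class $C_1$ and show the cut $\widehat c(C)$ is independent of $c\in C_1$; it then observes that by minimality of $E$ distinct classes must define distinct cuts, establishes that $E$-classes are $<_2,<_3$-convex, and finally realizes both factors of the $(12,23,13)$-majority diagram, whose unique completion forces $aEb$ while $a\ra{12}x\ra{23}b$ violates $<_2$-convexity. None of this machinery (nor any substitute for it) appears in your proposal, so the argument as written does not prove the lemma. To repair it you would need to replace the claimed dichotomy with an actual refutation of the ``each class defines one cut of $C$'' configuration, which is where the $k=3$ type analysis and the amalgamation argument are genuinely needed.
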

\begin{proof}
Given an $E$-class $C$ and an element $a$, we use $\widehat a(C)$ to denote the $<_1$-Dedekind cut defined by $a$ in $C$.

Let $a, b \in \widetilde C$, with $a <_1 b$, and suppose $\widehat a(C) = \widehat b(C)$. We first show we may suppose that $a/E = b/E$ and $a \ra{123} b$.

  Trivially, we cannot have $a, b \in C$. Now assume only one of $a, b \in C$, say $a$. Then $a$ is a maximal element of the cut $\widehat b(C)$. But given any $d \in \widetilde C\bs C$, $\widehat d(C)$ has no maximal or minimal elements; otherwise, the elements of $C$ would realize at least 3 types over $d$, but there are only 2 realized types by Lemma \ref{lemma:3cross}. Thus $a, b \in \widetilde C \bs C$.

Now suppose $a/E = b/E$, but $a \la{23} b$. By the genericity of $C$, there is a $b'$ in the $<_1$-interval $(a, b)_{<_1}$ such that $a \ra{123} b'$, so we may replace $b$ by $b'$.

\begin{claim}
Suppose $C_1 = a/E \neq b/E = C_2$. Then there exists $a' \in C_1$ such that $a' \ra{123} a$ and $\widehat a(C) = \widehat {a'}(C)$
\end{claim}
\begin{claimproof}
Let $a' \ra{123} a$. Since $a <_1 b$, by Corollary \ref{cor:samecross} $tp(a, b) = tp(a', b)$. Since by Lemma \ref{cor:contain}, $a/E \in \widetilde C$, there is a $c \in C$ such that $c <_1 a'$, so by Corollary \ref{cor:samecross} $tp(a, c) = tp(a', c)$. Thus $(a, b, c) \cong (a', b, c)$, so by homogeneity there is an automorphism fixing $c$ and taking $(a, b)$ to $(a', b)$. Thus $\widehat {a'}(C) = \widehat b(C) = \widehat a(C)$.
\end{claimproof}

In this case, we may then replace $a, b$ by $a', a$. 

Thus, we may now suppose that $a/E = C_1 = b/E$ and $a \ra{123} b$.

\begin{claim}
$\widehat c(C)$ is independent of the choice of $c \in C_1$.
\end{claim}
\begin{claimproof}
Consider $x, y \in C_1$, and, using the genericity of $C_1$, find $c_1, c_2 \in C_1$ such that $c_1 <_1 x,y <_1 c_2$ and $c_1 \ra{123} c_2$.

Take $z \in C$, with $z <_1 a,c_1$. By Corollary \ref{cor:samecross}, $(z, a, b) \cong (z, c_1, c_2)$. Thus, since $\widehat a(C) = \widehat b(C)$, we have $\widehat c_1(C) = \widehat c_2(C)$, so $\widehat x(C) = \widehat y(C)$.
\end{claimproof}

Without loss of generality, we now assume $C <_2 C_1$, so by Lemma \ref{lemma:3cross}, the types realized in $C \times C_1$ are $\ra{12}$ and $\la{13}$. Thus by homogeneity, given any $E$-classes $C, C'$, if $\ra{12}$ or $\la{13}$ is realized in $C \times C'$, then $C'$ defines a $<_1$-Dedekind cut in $C$; if neither these types nor their opposites are realized, then the only remaining types are $\ra{23}$ and $\la{23}$, and by Lemma \ref{lemma:norev} exactly one of them is realized, so neither class is in the $<_1$-convex closure of the other. In particular, $E$-classes are $<_2, <_3$-convex.

Note that if every $E$-class $C' \subset \widetilde C$ such that $C <_2 C'$ defined the same $<_1$-Dedekind cut in $C$, then $C$ would have an $\emptyset$-definable partition, contradicting the minimality of $E$.

\begin{claim}
Both factors of the $(12,23,13)$-majority diagram, displayed below (with the edge $x \ra{23} z$ not drawn), are realized in $\Gamma$.

\begin{figure}[h]
\begin{diagram}
& & \overset{x}{\bullet} & & \\
& \ruTo^{12} & \dTo_{23}  & \rdTo^{23} & \\
a \odot &\rTo^{12} & \bullet_{y}  & \rTo^{13} & \odot b \\
& \rdTo_{23} & \dTo_{23} & \ruTo_{13} & \\
& & \underset{z}{\bullet} & & \\
\end{diagram}
\caption{}
\label{fig:15}
\end{figure}
\end{claim}
\begin{claimproof}
We only prove the first factor is realized, since the argument for the second is nearly identical. First, as shown in Figure \ref{fig:16}, the first factor is the unique amalgam of the following 3-types, so it suffices to show these are realized.

\begin{figure}[h]
\begin{diagram}
& & \overset{x}{\odot} & & \\
& \ruTo^{12} & \dTo_{23}   \\
a \bullet &\rTo^{12} & \bullet_{y}  \\
& \rdTo_{23} & \dTo_{23}\\
& & \underset{z}{\odot} & & \\
\end{diagram}
\caption{}
\label{fig:16}
\end{figure}

For the 3-type $(a,x,y)$ from the diagram, let $a/E = C$.  Take distinct $E$-classes $C', C'' \subset \widetilde C$ such that $C <_2 C'<_2 C''$ and $C'$ and $C''$ define distinct $<_1$-cuts in $C$. Then there are $x \in C', y \in C''$ realizing the triangle $(a,x,y)$.

For the 3-type $(a,y,z)$, we claim it is the unique amalgam of the diagram in Figure \ref{fig:17}.

\begin{figure}[h]
\begin{diagram}
 y \odot & &  & & \odot z  \\
& \luTo_{12} &  & \ruTo_{23} & \\
& & {\bullet}_a & & \\
\end{diagram}
\caption{}
\label{fig:17}
\end{figure}

By transitivity, $y <_3 z$ and $z <_1 y$, so the possible completions are $y \ra{23} z$ and $z \ra{12} y$. However, if $z \ra{12} y$, then $y/E$ defines a $<_1$-Dedekind cut in both $a/E$ and $z/E$, but $z/E <_1 a/E$, which is a contradiction. Thus the only allowed completion is $y \ra{23} z$.
\end{claimproof}

We are forced to complete the $(12,23,13)$-majority diagram by $a \ra{123} b$, so that $aEb$. However, $a \ra{12} x \ra{23} b$ violates the requirement that $E$-classes are $<_2$-convex. Thus $\Gamma$ is not homogeneous.
\end{proof}

\begin{proposition}
Let $E$ be a minimal non-trivial $\emptyset$-definable equivalence relation in a homogeneous 3-dimensional permutation structure, and $C$ be an $E$-class. Then $\widetilde E$ is an equivalence relation.
\end{proposition}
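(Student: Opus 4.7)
The plan is to show that the sets $\widetilde{C}$, as $C$ ranges over $E$-classes, form a partition of $\Gamma$. From this, the proposition follows immediately: $a\,\widetilde{E}\,b$ means $b \in \widetilde{a/E}$, and since $a$ itself always lies in $\widetilde{a/E}$, this is the same as saying that $a$ and $b$ lie in the common block $\widetilde{a/E}$ of the partition, so $\widetilde E$ is precisely the equivalence relation associated to that partition.

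The heart of the argument is the following claim: if $C, C'$ are distinct $E$-classes with $C' \subseteq \widetilde{C}$, then $\widetilde{C'} = \widetilde{C}$. The inclusion $\widetilde{C'} \subseteq \widetilde{C}$ is free, by $<_1$-convexity of $\widetilde{C}$. For the reverse I would argue by contradiction: suppose some $c \in C$ lies outside $\widetilde{C'}$. Then in fact $C \cap \widetilde{C'} = \emptyset$, since any $c^* \in C \cap \widetilde{C'}$ would lie in $\widetilde{C'} \setminus C'$ (the classes $C, C'$ being disjoint), whence Corollary \ref{cor:contain} would force $C = c^*/E \subseteq \widetilde{C'}$, contradicting the choice of $c$. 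Now pick $c'_1 <_1 c'_2$ in $C'$, possible since $C'$ is infinite by Lemma \ref{lemma:Cgeneric}. Lemma \ref{lemma:dense} produces some $c \in C$ with $c'_1 <_1 c <_1 c'_2$, and $<_1$-convexity of $\widetilde{C'}$ places this $c$ inside $\widetilde{C'}$, contradicting $C \cap \widetilde{C'} = \emptyset$.

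Given the key claim, the general partition statement is a short case-chase. Suppose $p \in \widetilde{C} \cap \widetilde{C'}$ with $C \neq C'$. If $p \in C$ then $p \in \widetilde{C'} \setminus C'$, so Corollary \ref{cor:contain} gives $C \subseteq \widetilde{C'}$, and the key claim yields $\widetilde{C} = \widetilde{C'}$; the case $p \in C'$ is symmetric. Otherwise, letting $D = p/E$, Corollary \ref{cor:contain} gives $D \subseteq \widetilde{C}$ and $D \subseteq \widetilde{C'}$, and two applications of the key claim deliver $\widetilde{C} = \widetilde{D} = \widetilde{C'}$. The only real ingredient is the density provided by Lemma \ref{lemma:dense}; everything else is routine convexity bookkeeping combined with Corollary \ref{cor:contain}, and no single step looks like a substantial obstacle once Lemma \ref{lemma:dense} is in hand.
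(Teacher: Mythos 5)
Your proof is correct and uses the same two ingredients as the paper's own argument: Corollary \ref{cor:contain} to promote a single point of $\widetilde{C'}\setminus C'$ to a whole $E$-class, and Lemma \ref{lemma:dense} to place a point of $C$ strictly $<_1$-between two points of $C'$, yielding the mutual containment $C\subseteq\widetilde{C'}$ and $C'\subseteq\widetilde{C}$. The paper runs this directly rather than by contradiction and is terser about the partition bookkeeping, but the approach is essentially identical.
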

\begin{proof}

Let $C$ be an $E$-class. By Corollary \ref{cor:contain}, $\widetilde C$ is a union of $E$-classes. Now suppose $C' \subset \widetilde C$ is an $E$-class. By Lemma \ref{lemma:dense} there are $c_1, c_2 \in C'$ such that $\widehat c_1(C) \neq \widehat c_2(C)$, and applying Corollary \ref{cor:contain} again we see $C \subset \widetilde {C'}$. Thus $\widetilde E$ defines a partition.
\end{proof}

\begin{corollary} \label{cor:congconv}
$\widetilde E$ is a congruence, $E$-classes are $(<_2,<_3)$-convex, ${<_2} = {<_3}$ on $E$-classes, and ${<_2} = {<_3^{\opp}}$ between $E$-classes in the same $\widetilde E$-class.
\end{corollary}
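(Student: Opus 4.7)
The plan is to dispatch the four claims separately, each via a short extraction from material already in hand.

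For the claim that $<_2 = <_3$ on $E$-classes, I would invoke Lemma \ref{lemma:Cgeneric} together with Theorem \ref{theorem:primclass}: the induced structure on any $E$-class is primitive, hence a generic $n$-dimensional permutation structure for some $n \le 2$ (not $3$, else $E$ would be universal). The convention fixed before Lemma \ref{lemma:3cross}, which puts $\ra{23}\in E$ whenever $E$ contains a 2-type besides $\ra{123}, \la{123}$, then forces either $<_1=<_2=<_3$ inside classes, or $<_2=<_3$ with $<_1$ independent; either way the claim holds.

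For $(<_2,<_3)$-convexity of $E$-classes and for $<_2=<_3^{opp}$ between $E$-classes in a common $\widetilde E$-class, I would reuse the case distinction recorded within the proof of Lemma \ref{lemma:dense}: for distinct $E$-classes $C,C'$, either \textbf{(i)} both $\ra{12}$ and $\la{13}$ are realized in $C\times C'$, in which case $C'\subseteq\widetilde C$ and $C\subseteq\widetilde{C'}$, or \textbf{(ii)} only one of $\ra{23},\la{23}$ is realized (by Lemma \ref{lemma:norev}), in which case neither class lies in the $<_1$-convex hull of the other. In both cases $<_2$ compares $C$ and $C'$ as a whole (and similarly $<_3$), so no point of $C'$ can sit $<_2$- or $<_3$-between points of $C$; this gives $(<_2,<_3)$-convexity. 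In case (i) the two realized types $\ra{12}$ and $\la{13}$ both satisfy $x<_2 y \wedge x>_3 y$, so $<_2=<_3^{opp}$ between such classes; and by Lemma \ref{lemma:3cross}, case (i) is precisely the case that $C$ and $C'$ share a $\widetilde E$-class.

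The final claim is that $\widetilde E$ is a congruence. By the previous proposition it is already an equivalence relation, and $<_1$-convex by construction, so I need only $<_2$- and $<_3$-convexity. Suppose toward contradiction that $a,b \in \widetilde C$ and $c\notin\widetilde C$ satisfy $a <_2 c <_2 b$. By case (ii), the orders $<_2$ and $<_3$ agree in direction between $E$-classes lying in distinct $\widetilde E$-classes, so the inequalities push down to $a/E <_3 c/E <_3 b/E$ as well. If $a/E = b/E$ this is already contradictory; otherwise $a/E$ and $b/E$ are distinct $E$-classes in $\widetilde C$, and by the previous step their $<_2$-orientation is the reverse of their $<_3$-orientation, contradicting the two chains of inequalities just produced. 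The $<_3$-convexity argument is symmetric. The only real subtlety is this last step --- one must split on whether $a/E = b/E$ to extract the 3-cycle cleanly --- but the casework is short.
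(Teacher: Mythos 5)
Your proposal is correct and follows essentially the route the paper intends: the corollary is stated without a separate proof because its ingredients are exactly the type analysis recorded inside the proof of Lemma \ref{lemma:dense} (the $\ra{12}/\la{13}$ versus $\ra{23}/\la{23}$ dichotomy via Lemmas \ref{lemma:3cross} and \ref{lemma:norev}), the standing notational convention on which 2-types lie in $E$, and the preceding proposition that $\widetilde E$ is an equivalence relation. Your explicit argument for the $(<_2,<_3)$-convexity of $\widetilde E$-classes --- splitting on whether $a/E=b/E$ and playing $<_2=<_3$ across $\widetilde E$-classes against $<_2=<_3^{opp}$ within them --- is the one step the paper leaves implicit, and it is carried out correctly.
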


\subsection{Reduction via Quotients}
Since $\widetilde E$ is a congruence by Corollary \ref{cor:congconv}, it suffices to consider the case $\widetilde E = \bbone$, since we may otherwise consider the restriction ${\Gamma \upharpoonright \widetilde E}$. For this subsection, we work with $k$-dimensional permutation structures.

We now aim for the following lemmas. The first implies that $\Gamma$ is determined by its restriction to $E$-classes and the reduct of $\Gamma/E$ forgetting all orders that are not $E$-convex. The second allows us to carry out our induction by showing that the above reduct of $\Gamma/E$ must be homogeneous.

The following lemma is more naturally stated in the language of subquotient orders, but as it is the concluding step in the classification of certain permutation structures, we give it in a form appropriate for its intended application.

\begin{lemma} \label{lemma:uniqueextension}
Let $(\Gamma^*, <_1^*, ..., <_\ell^*)$ be homogeneous.Let $k \geq \ell$, and partition $[k]$ as $\cup_{i \leq m} I_i$, such that each $I_i$ contains at most one $j \geq \ell+1$.
 Then there exists a homogeneous structure $(\Gamma, E, <_1, ..., <_k)$, unique up to isomorphism, with the following properties.
\begin{enumerate}
\item $E$-classes are $<_1, ..., <_\ell$-convex and $<_{\ell+1}, ..., <_k$-dense.
\item $(\Gamma/E, <_1, ..., <_\ell) \cong (\Gamma^*, <_1^*, ..., <_\ell^*)$
\item ${<_j \upharpoonright_E} = {<_{j'}\upharpoonright_E}$ for $j, j'$ in a given $I_i$, and the induced structure on any $E$-class $C$ is fully generic, modulo the identification of orders in the same $I_i$.
\end{enumerate}
\end{lemma}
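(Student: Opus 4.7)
The plan is to realize $\Gamma$ as the Fra\"\i ss\'e limit of an explicit amalgamation class $\KK$ encoding the finitary parts of conditions (1)--(3). Define $\KK$ to be the class of finite $\{E, <_1, \ldots, <_k\}$-structures $A$ such that $E$ is an equivalence relation, each $<_j$ is a linear order, $<_j$ is $E$-convex for $j \leq \ell$, the orders $<_j$ and $<_{j'}$ agree on every $E$-class whenever $j, j' \in I_i$, the quotient $A/E$ equipped with $<_1, \ldots, <_\ell$ embeds into $\Gamma^*$, and each $E$-class of $A$, equipped with one representative order chosen from each $I_i$, embeds into the generic $m$-dimensional permutation structure.

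The main step is to verify that $\KK$ has the amalgamation property. Given $A \subseteq B_1, B_2 \in \KK$, I would proceed in two stages. First, amalgamate $B_1/E$ and $B_2/E$ over $A/E$ using amalgamation in $\mathrm{Age}(\Gamma^*)$; this determines $<_j$ between distinct $E$-classes for $j \leq \ell$. Second, for each $E$-class of $A$ whose image in the quotient amalgam is covered by classes from both $B_1$ and $B_2$, amalgamate the two classes using amalgamation in the generic $m$-dimensional permutation structure, thereby determining every $<_j$ within each resulting $E$-class. The only remaining data is $<_j$ for $j \geq \ell+1$ between points $b_1 \in B_1 \setminus A$ and $b_2 \in B_2 \setminus A$ lying in distinct $E$-classes; since such $<_j$ is not $E$-convex and each $I_i$ contains at most one $j \geq \ell+1$, no further constraint is imposed by the earlier stages, and the partial order can be completed to a linear order freely (e.g.\ by Szpilrajn's extension theorem). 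Joint embedding is analogous.

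Taking $\Gamma$ to be the Fra\"\i ss\'e limit of $\KK$, property (2) holds because the reduct $(\Gamma/E, <_1, \ldots, <_\ell)$ is homogeneous with age equal to $\mathrm{Age}(\Gamma^*)$, hence is isomorphic to $\Gamma^*$. The $I_i$-agreement part of (3) is built into $\KK$, and full genericity on each $E$-class follows from the one-point extension property of a Fra\"\i ss\'e limit, applied to same-class extensions. For (1), $<_j$-convexity of $E$-classes for $j \leq \ell$ is definitional, while density for $j \geq \ell+1$ follows by a one-point extension: given $x <_j y$ in $\Gamma$ and any $E$-class $C$, a new point of $C$ can be inserted $<_j$-between $x$ and $y$ consistently with the $E$-convex orders and the within-class structure, and the resulting configuration lies in $\KK$.

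For uniqueness, I would show that any other homogeneous structure $(\Gamma', E, <_1, \ldots, <_k)$ satisfying (1)--(3) has the same age as $\Gamma$. One direction is immediate: the age of $\Gamma'$ is contained in $\KK$ by inspection of the conditions. For the converse, every $A \in \KK$ embeds into $\Gamma'$ by induction on $|A|$: new points inside an existing $E$-class are accommodated by the class-level genericity from (3), and new points in fresh $E$-classes are placed using (2) at the quotient level followed by the density in (1) to realize the non-convex orders correctly. Standard Fra\"\i ss\'e uniqueness then gives $\Gamma' \cong \Gamma$. The main technical obstacle is the amalgamation step for $\KK$; the hypothesis that each $I_i$ contains at most one $j \geq \ell+1$ is used there precisely to ensure the inter-class non-convex orders are genuinely unconstrained across $B_1 \setminus A$ and $B_2 \setminus A$.
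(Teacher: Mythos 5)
Your overall strategy --- realizing $\Gamma$ as the Fra\"\i ss\'e limit of an explicitly described amalgamation class $\KK$ and proving uniqueness by an age comparison --- is legitimate and genuinely different from the paper, which obtains existence by expanding the composition $\Gamma^*[C]$ with generic subquotient orders (leaning on Theorem \ref{theorem:amalg}) and uniqueness via the one-point extension property of Lemma \ref{lemma:1point}. However, your verification of amalgamation for $\KK$ has a concrete flaw. In stage 2 you amalgamate the two pieces of an $E$-class $C$ over $A \cap C$ ``using amalgamation in the generic $m$-dimensional permutation structure,'' and in stage 3 you claim the only remaining data is the inter-class part of the dense orders, which can be completed freely. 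But for $j \geq \ell+1$ the restriction $<_j \upharpoonright C$ is \emph{not} free over $A \cap C$: it is constrained by transitivity through points of $A$ lying \emph{outside} $C$. Concretely, take $a, c \in A$ in distinct $E$-classes, $B_1 = A \cup \set{b_1}$ with $b_1 E c$, $b_1 <_j a$ and $b_1 <_j c$, and $B_2 = A \cup \set{b_2}$ with $b_2 E c$ and $a <_j b_2 <_j c$. Transitivity via $a$ forces $b_1 <_j b_2$ in any amalgam, yet the data over $A \cap C = \set{c}$ (namely $b_1 <_j c$ and $b_2 <_j c$) permits the class-level amalgamation to choose $b_2 <_j b_1$; with that choice no linear extension of $<_j$ exists and Szpilrajn cannot be applied in stage 3. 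The repair is to form, for each dense $j$, the transitive closure $P_j$ of $<_j$ on $B_1 \cup_A B_2$ (a partial order, since the factors agree on $A$) \emph{before} stage 2, and to require the within-class completion of the order associated to the block $I_i \ni j$ to extend $P_j \upharpoonright C$; this is always possible because the per-order completions in the generic $m$-dimensional permutation structure are independent and $P_j \upharpoonright C$ is consistent with the within-class data. As written, though, the step fails.

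A smaller but real omission: for property $(2)$ you assert that $(\Gamma/E, <_1, \ldots, <_\ell)$ is homogeneous. This is not automatic for quotients and is one of the two substantive points the paper proves separately (Lemma \ref{lemma:homquotient}, via Lemma \ref{lemma:1point}): given isomorphic finite subsets of the quotient one must lift them to isomorphic finite subsets of $\Gamma$, which requires choosing representatives whose dense-order types also match. For your purposes the weaker statement that the quotient has the extension property over the age of $\Gamma^*$ would suffice and is easier, but some such argument must be supplied. Likewise, in your uniqueness direction the phrase ``followed by the density in $(1)$ to realize the non-convex orders correctly'' conceals the main content of the paper's Lemma \ref{lemma:1point}: one must check that the finitely many interval constraints imposed on an $E$-class --- one per block $I_i$, assembled jointly from the within-class conditions over $A \cap C$ and the dense-order conditions over points of $A$ outside $C$ --- are individually nonempty, and only then invoke genericity of the class to intersect them. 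I recommend isolating that one-point extension statement as a lemma; it is exactly what both your existence and your uniqueness arguments need.
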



\begin{lemma} \label{lemma:homquotient}
Let $\Gamma$ be a homogeneous $k$-dimensional permutation structure.  Let $E$ be a minimal non-trivial $\emptyset$-definable equivalence relation in $\Gamma$, and suppose $E$-classes are $<_i$-convex for $1 \leq i \leq \ell$ and $<_i$-dense for $\ell+1 \leq i \leq k$. Suppose each $E$-class is generic, modulo the agreement of certain orders up to reversal. Then $(\Gamma/E, <_{1}, ..., <_\ell)$ is homogeneous.
\end{lemma}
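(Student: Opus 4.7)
The plan is to reduce homogeneity of $\Gamma/E$ in the reduct language $\set{<_1, \ldots, <_\ell}$ to the homogeneity of $\Gamma$. Given a partial isomorphism $\phi \colon A^* \to B^*$ between finite subsets of $\Gamma/E$ in the reduct, it suffices to produce a lift: a partial $\Gamma$-isomorphism $\tilde\phi \colon \tilde A \to \tilde B$ with $\tilde A/E = A^*$, $\tilde B/E = B^*$, and $\tilde\phi$ descending to $\phi$. Homogeneity of $\Gamma$ then extends $\tilde\phi$ to $\sigma \in \mathrm{Aut}(\Gamma)$, and since $E$ is $\emptyset$-definable, $\sigma$ descends to an automorphism of $\Gamma/E$ in the full language (and hence in the reduct) that extends $\phi$.

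To produce the lift, I pick a single representative $\tilde a_i \in a_i^*$ for each $a_i^* \in A^*$, and then inductively choose $\tilde b_i \in b_i^* := \phi(a_i^*)$ so that $\tilde a_i \mapsto \tilde b_i$ is a partial $\Gamma$-isomorphism. In the inductive step, suppose $\tilde b_1, \ldots, \tilde b_m$ have already been chosen; I seek $\tilde b_{m+1} \in b_{m+1}^*$ whose $\Gamma$-type over $\set{\tilde b_1, \ldots, \tilde b_m}$ matches, via the existing bijection, the $\Gamma$-type of $\tilde a_{m+1}$ over $\set{\tilde a_1, \ldots, \tilde a_m}$. By homogeneity of $\Gamma$ applied to the current partial iso extended by $\tilde a_{m+1}$, some $c \in \Gamma$ witnesses the desired type, and a short check using the reduct-iso property of $\phi$ shows that $c/E$ and $b_{m+1}^*$ share the same reduct-type over $\set{b_1^*, \ldots, b_m^*}$. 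By $E$-convexity of $<_r$ for $r \leq \ell$, any element of $b_{m+1}^*$ automatically realizes the correct $<_r$-cut of $\set{\tilde b_1, \ldots, \tilde b_m}$ in those orders. The remaining task is to find $\tilde b_{m+1} \in b_{m+1}^*$ whose $<_r$-cut of $\set{\tilde b_1, \ldots, \tilde b_m}$ matches that of $c$ for every $r \geq \ell+1$ simultaneously.

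The main obstacle is this simultaneous realization inside a single $E$-class. For each individual $r \geq \ell+1$, density of $E$-classes in $<_r$ gives a realization of the required cut, but the difficulty is to realize all cuts jointly. Here the genericity hypothesis on $E$-classes is crucial: informally, it ensures that the traces of the non-convex orders on a given $E$-class with respect to a fixed finite external parameter set behave independently, so that any consistent combination of $<_r$-cuts (for $r \geq \ell+1$) is realized inside $b_{m+1}^*$. Formally, this should follow from the Fra\"iss\'e-limit description of $\Gamma$: the configuration of $b_{m+1}^*$ together with $\set{\tilde b_1, \ldots, \tilde b_m}$ is built by a generic amalgamation procedure (as in Theorem \ref{theorem:amalg}), and the consistent type specifying the desired $<_r$-cuts is realized by the extension property of this amalgamation class. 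Verifying this joint extension property is the technically delicate step; once it is in hand, the induction closes and the lift exists.
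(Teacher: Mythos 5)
Your overall strategy is the same as the paper's: reduce to a one-point extension problem, use homogeneity of $\Gamma$ to produce a point $c$ realizing the required type somewhere, observe that convexity makes the $<_r$-constraints for $r\le\ell$ automatic for any element of the target class, and then transfer $c$ into the prescribed $E$-class by realizing the remaining dense-order cuts there. The problem is that the step you yourself flag as ``technically delicate'' --- the simultaneous realization, inside a single $E$-class, of the prescribed $<_r$-cuts for all $r\ge\ell+1$ --- is exactly the mathematical content of the lemma, and your proposed justification for it does not work. You appeal to ``the Fra\"\i ss\'e-limit description of $\Gamma$'' and to a generic amalgamation procedure ``as in Theorem \ref{theorem:amalg}''. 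But $\Gamma$ here is an arbitrary homogeneous structure satisfying the stated hypotheses; identifying it with the Fra\"\i ss\'e limit of a well-equipped lift is essentially the conclusion of the whole classification (Corollary \ref{cor:confirmsqo}), not an available tool, so this appeal is circular. Theorem \ref{theorem:amalg} only asserts that a certain amalgamation class exists; it says nothing about the structure $\Gamma$ at hand.

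The correct justification, which is what the paper isolates as Lemma \ref{lemma:1point}, uses only the hypotheses of the present lemma: (i) the characterization of genericity of the induced structure on an $E$-class $C$ as ``finitely many non-empty open intervals, one per order, have a common point,'' and (ii) $<_r$-density of $E$-classes for $r\ge\ell+1$, which guarantees that the non-empty open $<_r$-interval of $\Gamma$ determined by $c$'s cut traces to a non-empty open interval of $C$. One further point you do not address: genericity of $C$ is only ``modulo the agreement of certain orders up to reversal,'' so if two dense orders had restrictions to $C$ that agreed up to reversal, their cuts could not be chosen independently; this is harmless only because of Lemma \ref{lemma:denserestrict}(2), which shows agreeing restrictions force the orders to agree globally, so the two required cuts (both coming from the single witness $c$) are automatically compatible. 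The paper packages all of this into the reduct $\Gamma^{red}$ and Lemma \ref{lemma:1point}; without some such argument your induction does not close.
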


The following lemma is not necessary for the case $k=3$, since $E$ is only dense with respect to one order.

\begin{lemma}\label{lemma:denserestrict}
Suppose $(\Gamma, <_1, ..., <_k)$ is homogeneous. Let $E$ be a minimal non-trivial $\emptyset$-definable equivalence relation in $\Gamma$, and $C$ be an $E$-class. Suppose $C$ is generic, modulo the agreement of certain orders up to reversal. Further suppose that $C$ is $<_i$-convex for $1 \leq i \leq \ell$ and $<_i$-dense for $\ell+1 \leq i \leq k$. Then
\begin{enumerate}
\item If $C_1, C_2$ are $E$-classes, then $C_1$ remains homogeneous after naming $C_2$.
\item If $i, j \geq \ell+1$, and ${<_i \upharpoonright_E} = {<_j \upharpoonright_E}$, then ${<_i} = {<_j}$.
\end{enumerate}
\end{lemma}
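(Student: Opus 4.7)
For part (1), I would build the required automorphism by countable back-and-forth. Given $\bar a, \bar b \in C_1$ agreeing on all quantifier-free types over every finite subset of $C_2$ in $\Gamma$, I enumerate $\Gamma = \{z_n : n \in \omega\}$ and construct a chain of finite partial isomorphisms $\phi_n$ of $\Gamma$, starting with $\phi_0 = (\bar a \mapsto \bar b)$, so that every element of $C_2$ is eventually placed in the domain (and range) via the identity. The crux is the one-point extension: given a new point $x$, one must realize on the image side the type $\operatorname{qftp}(x/\operatorname{dom}(\phi_n))$ transferred through $\phi_n$. When $x \in C_2$ we set $\phi_{n+1}(x) = x$, and compatibility with the existing map follows from the hypothesis, since for any finite $\bar c \subseteq C_2$ we have $\operatorname{qftp}(\bar a \bar c) = \operatorname{qftp}(\bar b \bar c)$. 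When $x \in C_1$, the type to realize inside $C_1$ consists of the quantifier-free type over $C_1 \cap \operatorname{dom}(\phi_n)$ together with Dedekind-cut constraints in the dense orders $<_{\ell+1}, \ldots, <_k$ coming from the named elements of $C_2$; the convex orders impose no new constraint since $C_1$ lies entirely on one side of each $E$-class $C_2$ in each $<_i$ with $i \leq \ell$. By the assumed genericity of $C_1$ modulo identifications, such a type is realized in $C_1$. When $x$ lies in a third $E$-class we use homogeneity of $\Gamma$ together with transitivity of $\operatorname{Aut}(\Gamma)$ on the appropriate $E$-classes to select the image.

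For part (2), I argue by contradiction. Suppose $<_i \upharpoonright_E = <_j \upharpoonright_E$ with $i,j \geq \ell+1$ but $<_i \neq <_j$; choose $x \in C_x$ and $y \in C_y$ in distinct $E$-classes witnessing disagreement, say $x <_i y$ and $y <_j x$. Since $<_i, <_j$ are both dense on $C_x$, the sets
\[
L_i := \{a \in C_x : a <_i y\}, \qquad L_j := \{a \in C_x : a <_j y\}
\]
are proper nonempty initial segments of the common order $<_i\!\upharpoonright_{C_x} = <_j\!\upharpoonright_{C_x}$, and the disagreement at $(x,y)$ forces $L_i \neq L_j$; this partitions $C_x$ into at least three $\{y\}$-definable intervals. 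By part (1), orbits of $\operatorname{Aut}(C_x/y)$ on $C_x$ are precisely the level sets of the 2-type with $y$, so this three-interval partition corresponds to at least three such orbits. I then aim to contradict the genericity of $C_x$ modulo the identification of $<_i$ and $<_j$: the Fra\"\i ss\'e class underlying $C_x$ forbids configurations in which $<_i$ and $<_j$ disagree on $E$-related pairs, and this local constraint propagates to the one-point extension by $y$ in the sense that the ``split-cut'' configuration on three elements $a_-, a, a_+ \in C_x$ with $a_- <_i y, a_- <_j y$; $a >_i y, a <_j y$; $a_+ >_i y, a_+ >_j y$ cannot coexist with the pattern of 2-types between $C_x$ and $C_y$ permitted by Lemma \ref{lemma:norev}. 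Hence $L_i = L_j$, so $<_i$ and $<_j$ agree on $C_x \times \{y\}$; varying the pair $(x,y)$ yields $<_i = <_j$ globally.

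The main obstacle I expect is the final step of part (2): rigorously converting ``$<_i = <_j$ on $E$-related pairs'' into a forbidden-configuration argument that rules out the split-cut pattern after naming a single external point. The density of $E$-classes in both orders $<_i$ and $<_j$ is what makes both cuts nontrivial and hence the three-interval partition genuinely present; the role of Lemma \ref{lemma:norev} is to force directional uniformity of the cross-class 2-types so that the middle interval cannot be absorbed into one of the outer ones by an automorphism moving between $E$-classes. Part (1), by contrast, is essentially a back-and-forth whose one-point steps reduce to realizing types already forced by the existing assumptions, and its verification is routine once the case analysis on the $E$-class of the new point is set up.
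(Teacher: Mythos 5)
Your proof of part (1) targets the wrong statement. ``Naming $C_2$'' means adding a predicate for the \emph{set} $C_2$: the conclusion needed (and used later in part (2)) is that any isomorphism between finite substructures $A_1\cong A_2$ of $C_1$ extends to an automorphism of $\Gamma$ fixing $C_2$ \emph{setwise}, with no hypothesis about types over elements of $C_2$. You instead assume $\bar a,\bar b$ have the same quantifier-free type over every finite subset of $C_2$ and try to build an automorphism fixing $C_2$ \emph{pointwise}. That hypothesis is not available -- isomorphic tuples in $C_1$ are in general separated by points of $C_2$ in the dense orders -- and your back-and-forth step ``set $\phi_{n+1}(x)=x$ for $x\in C_2$'' is inconsistent once the domain contains points outside $\bar a\bar b\cup C_2$, since such an $x$ need not realize the transferred type over the current range. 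The paper's argument is much shorter and avoids all of this: for each dense order $<_i$ the set $B_i=\{x\in C_2 : A_1\cup A_2<_i x\}$ is a nonempty terminal segment of $C_2$, so by genericity of $C_2$ there is $b\in\bigcap_i B_i$; then $A_1b\cong A_2b$ (the convex orders between $C_1$ and $C_2$ carry no information and $b$ is above everything in the dense ones), and homogeneity of $\Gamma$ gives an automorphism taking $A_1b$ to $A_2b$, which fixes $b$ and hence $C_2$ setwise.

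For part (2) the decisive step is missing, as you yourself flag. Your three-interval partition of $C_x$ is definable from the single external point $y$, and naming a single point of another class genuinely cuts $C_x$ in each dense order, so bounded $y$-definable intervals are unavoidable and produce no contradiction with genericity; part (1) says nothing about orbits over a named \emph{point}. The paper's proof gets around exactly this by trading the point parameter for the class: it sets $I_a=\{x\in C_2 : a<_i x,\ x<_j a\}$ and $J_a=\{x\in C_1 : I_x\cap I_a\neq\emptyset\}$, shows both are bounded intervals, and observes that $J_a$ is definable from $a$ together with the set $C_2$. Part (1) then converts this into $a$-definability inside the generic structure $C_1$, where the only bounded $a$-definable set containing $a$ is $\{a\}$; from $J_a=\{a\}$, density forces $I_a=\{b\}$, which density contradicts again. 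A type-counting or forbidden-configuration argument of the kind you gesture at (via Lemma \ref{lemma:norev}) cannot substitute here, since this lemma is stated for arbitrary $k$ and the paper explicitly notes it is not needed when $k=3$, which is the only regime where the cross-class 2-type structure is constrained enough for such counting.
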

\begin{proof} \hspace{.1 cm} \newline
$(1)$ Given a finite $A \subset C_1$ and $i \geq \ell+1$, let $B_i = \set{x \in C_2 | A <_i x}$. Each such $B_i$ is a $<_i$-terminal segment of $C_2$, so by genericity their intersection is non-empty.

Now, consider $A_1 \cong A_2$ finite substructures of $C_1$. Let $A = A_1 \cup A_2$, and choose a $b$ in the intersection of the corresponding $B_i$. By homogeneity, there is an automorphism taking $A_1b$ to $A_2b$ and fixing $b$, hence $C_2$.

$(2)$ Suppose this is false, as witnessed by $<_i, <_j$. We consider $E$-classes as ordered sets with respect to the common restriction of these orders.

Take $a, b$ with $a <_i b$ and $b <_j a$, and  let $C_1 = a/E$ and $C_2 = b/E$. Let
$$I_a = \set{x \in C_2| a<_i x, x<_j a}, J_a = \set{x \in C_1| I_a \cap I_x \neq \emptyset}$$
Note that these are intervals in $C_2$ and $C_1$, respectively.

\begin{claim*}
$J_a = \set{a}$
\end{claim*}
\begin{claimproof}
By density and genericity, there are $b_1, b_2 \in C_2$ such that $b_1 <_{i,j} a <_{i, j} b_2$, so $I_a \subset (b_1, b_2)$. Then find $a_1, a_2 \in C_1$ such that $a_1 <_{i, j} b_1, b_2 <_{i, j} a_2$, so $J_a \subset (a_1, a_2)$.

Thus $J_a$ is $(a, C_2)$-definable and $<_{i, j}$-bounded. By $(1)$, $J_a$ is $a$-definable in $C_1$ and $<_{i, j}$-bounded, so $J_a = \set{a}$ by genericity of $C_1$.
\end{claimproof}

If there were some $b' \in I_a$ with $b' \neq b$, then by density, we could find some $a' \in C_1$ $<_i$-between $b$ and $b'$, and so would have $a' \in J_a$. Thus $I_a = \set{b}$. But by density there is a $b' \in C_2$ $<_i$-between $a$ and $b$, so $b' \in I_a$, which is a contradiction.
\end{proof}

Given $(\Gamma, <_1, ..., <_k)$ homogeneous such that no orders agree up to reversal, with $E$-classes $<_i$-convex for $1 \leq i \leq \ell$ and $<_i$-dense for $\ell+1 \leq i \leq k$, we will prefer to work in the quantifier-free interdefinable reduct $\Gamma^{\red} = (\Gamma, {<_{i_1}',} ..., {<_{i_m}',} {<_1'',} ...,{<_\ell'',} {<_{\ell+1},} ..., <_k)$ obtained as follows.
\begin{enumerate}
\item For $1 \leq i \leq \ell$, decompose $<_i$ into two subquotient orders: $<_i'$ from $\bbzero$ to $E$ and $<_i''$ from $E$ to $\bbone$. 
\item For each $i \geq \ell+1$, add the restriction $<_i \upharpoonright_E$ to the language as $<_i'$.
\item Consider the set of all $<_i'$. Many of these subquotient orders may be equal up to reversal, so pick one representative from each class and forget the rest. By Lemma \ref{lemma:denserestrict}, each class can contain at most one $<_i'$ with $i \geq \ell+1$, in which case this is taken as the representative.
\item Forget the $<_i'$ for $i \geq \ell+1$. 
\end{enumerate}

We now prove a 1-point extension property, which shows that to realize a type $p$ in an $E$-class $C$ , it is sufficient that the restriction of the type to each subquotient order is individually realized. 

\begin{lemma}\label{lemma:1point}
Let $(\Gamma, <_1, ..., <_k)$ be homogeneous such that no orders agree up to reversal. Let $E$ be a minimal non-trivial $\emptyset$-definable equivalence relation in $\Gamma$, and $C$ be an $E$-class. Suppose the induced structure on $C$ is generic, modulo the agreement of certain orders up to reversal. Suppose that $C$ is $<_i$-convex for $1 \leq i \leq \ell$ and $<_i$-dense for $\ell+1 \leq i \leq k$. We now work in $\Gamma^{\red}$.

Let $A \subset \Gamma^{\red}$ be finite, and $p$ a 1-type over $A$ not realized in $A$. Then $p$ is realized in a given $E$-class $C$ by a point not in $A$ iff the following hold.
\begin{enumerate}
\item $p \upharpoonright (<_1'', ..., <_\ell'')$ is realized by $C$ in $\Gamma/E$.
\item For each $<_i'$,  $p \upharpoonright <_i'$ is realized in $C \bs A$.
\item For $j \geq \ell+1$, $p \upharpoonright <_j$ is realized by some element not in $A$.
\item $p$ does not contain the formula ``$x = a$'' for any $a \in A$.
\end{enumerate}
\end{lemma}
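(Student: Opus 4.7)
The forward direction is immediate: if $b \in C \setminus A$ realizes $p$, then $b$ itself witnesses (1)--(3), while (4) follows from $b \notin A$.

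The backward direction is where the content lies. The plan is to construct $b \in C \setminus A$ by coordinating the individual realizability conditions. First I would observe that by condition (1), $C$ has the correct $<_i''$-type over $A/E$ in $\Gamma/E$, so any $b \in C$ automatically satisfies the $<_i''$-relations prescribed by $p$ toward elements of $A \setminus C$. Thus, the task reduces to finding $b \in C \setminus A$ satisfying the within-$C$ constraints (from the $<_i'$ and $<_j \upharpoonright_C$) and the external $<_j$-constraints (from $A \setminus C$, for $j \geq \ell+1$).

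Next, I would encode all remaining constraints as a collection of open cuts in $C$'s orders, after the prescribed identifications. Each $<_i'$-cut over $A \cap C$ is non-empty by (2). For each $<_j$ with $j \geq \ell+1$, the $<_j \upharpoonright_C$-constraint on $b$ is exactly $I_j \cap C$, where $I_j$ is the open $<_j$-interval in $\Gamma$ specified by $p$ over $A$; this combines the cut over $A \cap C$ (captured by some $<_i'$ in the reduct, via the identification of $<_j \upharpoonright_C$ with that $<_i'$ up to reversal) with the cut in $C$ induced by $A \setminus C$ via the $<_j$-density of $C$ in its convex closure $\widetilde C_j$. I would argue $I_j \cap C \neq \emptyset$ as follows: by (3), $I_j \neq \emptyset$ in $\Gamma$; since $p$ is a consistent complete type forcing $b \in C \cap I_j$, $\omega$-categoricity realizes this conjunction in $\Gamma$, furnishing a witness and in particular showing $I_j \cap \widetilde C_j \neq \emptyset$, whence the $<_j$-density of $C$ in $\widetilde C_j$ yields $I_j \cap C \neq \emptyset$.

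With every individual cut in each of $C$'s orders non-empty, the characterization of genericity (applied to $C$, modulo the prescribed order identifications) gives a non-empty simultaneous intersection within $C$. Since every non-empty open interval in a dense generic structure is infinite, condition (4) permits selecting $b$ in this intersection outside the finite set $A$; this $b \in C \setminus A$ realizes $p$. The main delicate step is the non-emptiness of $I_j \cap C$, which I handle via the consistency of $p$ as a complete type together with $\omega$-categoricity to avoid circular reasoning; the rest is standard combination of the generic 1-point extension property on $C$ with the interplay between density and convex closures established in the preceding lemmas.
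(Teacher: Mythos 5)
Your overall skeleton matches the paper's: necessity is immediate; condition (1) makes every point of $C$ satisfy the quotient part $p \upharpoonright (<_1'', \dots, <_\ell'')$; the remaining task is to show that each within-class restriction of $p$ cuts out a non-empty open interval of $C$; and then genericity of $C$ (modulo the identifications built into $\Gamma^{red}$) yields a point in the common intersection, which is open and hence infinite, so condition (4) lets you choose it outside $A$. Your use of condition (2) for the subquotient orders $<_i'$ is also essentially the paper's argument.

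The gap is in your treatment of the dense orders $<_j$, $j \geq \ell+1$. You justify $I_j \cap C \neq \emptyset$ by asserting that ``$p$ is a consistent complete type forcing $b \in C \cap I_j$'' and invoking $\omega$-categoricity to produce a realization. This is circular: producing a realization of $p$ lying in $C$ is exactly the conclusion of the lemma. Moreover, $p$ does not force membership in $C$ when $A \cap C = \emptyset$, which is precisely the situation in the proof of Lemma \ref{lemma:homquotient}; and in the uniqueness half of Lemma \ref{lemma:uniqueextension} the lemma is applied to a type $p$ imported from a \emph{different} structure, so its consistency with (let alone realizability in) the ambient structure is what is being proved and cannot be assumed. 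The intended argument needs none of this and is one line: in the setting of the lemma (one has already restricted to a $\widetilde E$-class, so $\widetilde E = \bbone$), ``$C$ is $<_j$-dense'' means dense in all of $\Gamma$, so the non-empty open $<_j$-interval $I_j$ supplied by condition (3) already meets $C$ in a non-empty open interval. Your detour through the $<_j$-convex closure $\widetilde C_j$ and the realizability of the full type $p$ should be replaced by this direct density observation; as written, that step does not go through.
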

\begin{proof}
These conditions are clearly necessary. We will prove they suffice. By condition $(1)$, all of $C$ satisfies $p\upharpoonright (<_1'', ..., <_\ell'')$. List all the subquotient orders from $\bbzero$ to $E$ together with $<_i$ for $i \geq \ell+1$ as $<_1^*, ..., <_n^*$, and let $p_i = p \upharpoonright <_i^*$. It now suffices to show $p_i$ contains a non-empty open $<_i^*$-interval of $C$, since then by the genericity of $C$ there will be some point in their intersection, which thus realizes $p$. 

In the case $<_i^*$ is a subquotient order from $\bbzero$ to $E$, by condition $(2)$ some point in $C$ realizes $p_i$ restricted to parameters outside of $C$, and so all of $C$ does; again by condition $(2)$, $p_i$ restricted to parameters inside of $C$ then contains an open interval of $C$. In the case ${<_i^*} = {<_j}$ for $j \geq \ell+1$, condition $(3)$ implies $p_i$ contains a non-empty open interval in $\Gamma$; since $E$-classes are $<_i^*$-dense, this interval meets $C$ in a non-empty open interval.
\end{proof}

\begin{proof}[Proof of Lemma \ref{lemma:homquotient}]
Let $\bar A \cong \bar B$ be finite subsets of $(\Gamma/E, <_1, ..., <_\ell)$. We lift $\bar A$ to $A \subset (\Gamma, <_1, ..., <_k)$, and look for an automorphism moving $A$ to a set covering $B$.

We proceed by induction on $|\bar A|$, and so consider $A = A_0 \cup \set{a}$ with $\bar a \not\in \bar A_0$, $\bar B = \bar A_0 \cup \set{C}$ for some $E$-class $C \not\in \bar A_0$.

Let $p = tp(a/A_0)$. We will now work in $\Gamma^{\red}$ and use Lemma \ref{lemma:1point} to find a realization of $p$ in $C$. Condition $(1)$ is equivalent to $\bar A \cong \bar B$. Since $\bar a \not\in \bar A_0$, $A_0 \cap C = \emptyset$, so $(p \upharpoonright <_i') \upharpoonright A$ simply says $x$ is not $<_i'$-related to any $a \in A$, which will be true for every $x \in C$. Finally, since $\bar a \not\in \bar A_0$, $a \not\in A_0$, so $a$ witnesses condition $(3)$.
\end{proof}

\begin{proof}[Proof of Lemma \ref{lemma:uniqueextension}]
For existence, let $\Gamma$ be the composition $\Gamma^*[C]$, where $C$ only carries the equality relation, and let $E$ be the corresponding equivalence relation. Note that each $<_i^*$ is now a subquotient order from $E$ to $\bbone$. For $1 \leq i \leq m-(k-\ell)$, add a generic subquotient order $<'_i$ from $\bbzero$ to $E$. For $\ell+1 \leq i \leq k$, add a generic linear order $<_i$. We may then define the specified convex orders $<_i$ for $1 \leq i \leq \ell$ as compositions of the $<_i^*$ with the $<_j'$ or the restrictions to $E$ of the $<_n$ for $\ell+1 \leq n \leq k$. 

For uniqueness, suppose we have a structure $(\Gamma', <_1, ..., <_k)$ satisfying the conditions. We will show $(\Gamma')^{\red}$ has the same finite substructures as the $\Gamma^{\red}$ we constructed above; as they are both homogeneous, they will thus be isomorphic.

As all the subquotient orders added to construct $\Gamma^{\red}$ were added generically, every finite substructure of $(\Gamma')^{\red}$ is a substructure of $\Gamma^{\red}$. We proceed by induction on the size of the substructure, so let $A \cup \set{a}$ be a finite substructure of $\Gamma^{\red}$, such that $A$ is a substructure of $(\Gamma')^{\red}$. We will use Lemma \ref{lemma:1point} to show $p = tp(a/A)$ is realized in $(\Gamma')^{\red}$.

We may assume $a \not\in A$, otherwise we are done, so condition $(4)$ is satisfied. As (suitable reducts of) $\Gamma^{\red}/E$ and $(\Gamma')^{\red}/E$ both are isomorphic to $\Gamma^*$, and as $a/E$ realizes $p \upharpoonright (<_1'', ..., <_\ell'')$ in the former, there is some $E$-class $C$ realizing it in the latter, so condition $(1)$ is satisfied. For condition $(2)$,  again since the quotient structures are isomorphic, we may pick $C$ such that for each $b \in A$, $C = b/E$ iff $a/E = b/E$. Thus, we are only concerned about $(p \upharpoonright <_i') \upharpoonright (A \cap C)$; but as this restricted type doesn't violate transitivity, it is realized in $C$ since $<_i'$ is dense on $C$. Finally for condition $(3)$, we again have that $p \upharpoonright <_j$ doesn't violate transitivity, and so is realized by some element not in $A$ since $<_j$ is dense on $(\Gamma')^{\red}$.
\end{proof}



\begin{remark}
Lemma \ref{lemma:uniqueextension} is also true if $(3)$ is relaxed to allow certain restrictions to be the reversals of others. The only case that isn't immediate is if we require $<_i \upharpoonright_E = (<_j \upharpoonright_E)^{\opp}$ for $<_i, <_j$ dense. But then ${<_i} = {<_j^{\opp}}$ by Lemma \ref{lemma:denserestrict}.
\end{remark}

\subsection{The Imprimitive Catalog}
We now classify the imprimitive homogeneous structures. Listing all these structures in the language of linear orders yields a mob of examples, since (pieces of) orders may be reversed, which orders a given order agrees with may differ for the various pieces of that order, and orders may be permuted. Thus, we present the structures up to definable equivalence, and do so in a language of subquotient orders, each of which is generic, and equivalence relations.

We first classify the imprimitive homogeneous 3-dimensional permutation structures $(\Gamma, E, <_1, <_2, <_3)$ in which $\widetilde E = \bbone$, so $\Gamma$ has no non-trivial $\emptyset$-definable congruence. By Corollary \ref{cor:congconv} and Lemmas \ref{lemma:homquotient} and \ref{lemma:uniqueextension}, $\Gamma$ is determined by $(\Gamma/E, <_2)$ and $(\Gamma \upharpoonright_E, <_1, <_2)$, which are themselves primitive homogeneous. There are thus two possibilities.

\begin{enumerate}
\item ($<_1 \upharpoonright_E \neq <_2 \upharpoonright_E$) $\Gamma$ may be presented as $(\Gamma, E, (<_i')_{i=1}^3)$ with $<_1'$ from $\bbzero$ to $\bbone$, $<_2'$ from $\bbzero$ to $E$, and $<_3'$ from $E$ to $\bbone$.
\item (${<_1 \upharpoonright_E} = {<_2 \upharpoonright_E}$) $\Gamma$ may be presented as $(\Gamma, E, (<_i')_{i=1}^2)$ with $<_1'$ from $\bbzero$ to $\bbone$ and $<_2'$ from $E$ to $\bbone$.
\end{enumerate}
$(1)$ is just a generic order added to Cameron's imprimitive homogeneous permutation, i.e. $\Q^2$ with an equivalence relation for agreement in the first coordinate and a lexicographic order, while $(2)$ is the structure described in Example \ref{ex:complex3} in Section 2. 

Also note that when presented in the language of 3 linear orders, $(1)$ uses all 8 2-types, while $(2)$ only uses 6 of them. Thus $(1)$ cannot appear as a factor in a composition, while $(2)$ can.

If $\Gamma$ has a non-trivial $\emptyset$-definable congruence, then it is a composition, whose factors are either primitive or one of the above structures. Below, let $\Gamma^{(g)}_i$ to denote the generic $i$-dimensional permutation structure

If all of the factors are primitive, then each factor is interdefinable with $\Gamma^{(g)}_i$ for $i \in \set{1, 2}$. Each such factor contributes $2^i$ 2-types. As there are at most 8 2-types available, we get at most the following structures.

\begin{enumerate}
\item[(3)] For any multisubset $I \subset \set{1, 2}$ such that $|I|>1$ and $\sum_{i \in I} 2^i \leq 8$, $\Gamma$ is the composition in any order of $\Gamma^{(g)}_i$ for $i \in I$.
\end{enumerate}

Finally, if one of the factors is imprimitive, we noted earlier it must be $(2)$. There are only 2 2-types remaining, so the other factor must be $\Gamma^{(g)}_1$.

\begin{enumerate}
\item[(4)] Let $\Gamma^*$ be the structure from $(2)$. Then $\Gamma = \Gamma^*[\Gamma^{(g)}_1]$ or $\Gamma^{(g)}_1[\Gamma^*]$.
\end{enumerate}

For all of these structures we have only shown that at most 8 2-types are realized, but it is easy to check that each structure can be presented in a language of 3 linear orders by taking restrictions and compositions of the subquotient orders, which concludes our derivation of the catalog.

 This last step prompts the following special case of Question \ref{qu:represent}.

\begin{question}
Let $\Gamma$ be a finite-dimensional permutation structure, with a linear lattice of $\emptyset$-definable equivalence relations. If $\Gamma$ has at most $2^k$ non-trivial 2-types, can $\Gamma$ be presented as a $k$-dimensional permutation structure?
\end{question}

We remark that the linearity hypothesis is necessary, since the full product $\Q^2$ (see Example \ref{ex:fullQ2}) only has 8 non-trivial 2-types, but requires 4 linear orders.

\appendix
\section{Proof of Theorem \ref{theorem:amalg}}
We first repeat the theorem we wish to prove.

\newtheorem*{theorem:amalg}{\bf{Theorem \ref{theorem:amalg}}}
\begin{theorem:amalg}
Let $\Lambda$ be a finite distributive lattice, and $\Gamma$ the generic $\Lambda$-ultrametric space. For each meet-irreducible $E \in \Lambda$, fix a function $f_E: \set{F \in \Lambda| E<F} \to \N$. Then there is a homogeneous expansion of $\Gamma$, which is generic in a natural sense, adding, for each meet-irreducible $E \in \Lambda$ and $F>E$, $f_E(F)$ subquotient orders from $E$ to $F$.

To be more precise, the following holds. Let $\AA^*$ be the class of finite structures $(A,d,\set{<_{E_i, j}}_{j=1}^{n_i})$ satisfying the following conditions.
\begin{itemize}
\item $(A,d)$ is  a $\Lambda$-ultrametric space.
\item $<_{E_i, j}$ is a subquotient order with bottom relation $E_i$, for some meet-irreducible $E_i \in \Lambda$, and top relation $F_{i, j} \in \Lambda$.
\end{itemize}
Then $\AA^*$ is an amalgamation class, and its \fraisse limit is the desired expansion if $\set{E_i}$ and $\set{F_{i,j}}$ are chosen to match $f_E$ from above.
\end{theorem:amalg}

Although we could prove this theorem by straightforward modifications of Lemmas 3.7 and 3.8 of \cite{Braun}, we choose to present a different take on the proof here.

\begin{definition} [\cite{Braun}, {Definition 2.3}]
Consider an amalgamation diagram of $\Lambda$-ultrametric spaces with base $B$. Let $x$ and $y$ be extension points in different factors, and for each $b_i \in B$ let $d(x, b_i) = e_i$ and $d(y, b_i) = e'_i$. \textit{Pre-canonical amalgamation} is the amalgamation strategy assigning $d(x, y) = \bigwedge_i(e_i \vee e'_i)$. \textit{Canonical amalgamation} is the strategy of pre-canonical amalgamation, followed by identifying $x$ and $y$ if $d(x, y) = \bbzero$. 
\end{definition}

\begin{proposition}[\cite{Braun}, {Proposition 2.4}]
 Let $\Lambda$ be a distributive lattice, and let $\KK$ be the class of all finite $\Lambda$-ultrametric spaces. Then $\KK$ is an amalgamation class, and any amalgamation diagram can be completed by canonical amalgamation.
 \end{proposition}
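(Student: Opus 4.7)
The plan is to verify directly that pre-canonical amalgamation produces a legitimate $\Lambda$-ultrametric structure extending both factors $A_1, A_2$ over a common base $B = \{b_1, \ldots, b_n\}$, and then argue that the canonical quotient (identifying any pair at distance $\bbzero$) is well-defined. Symmetry of $d(x, y) = \bigwedge_i (d(x, b_i) \vee d(y, b_i))$ is immediate, and all values lie in $\Lambda$ since $\Lambda$ is closed under its lattice operations. So the entire content reduces to checking the ultrametric triangle inequality $d(u, v) \leq d(u, w) \vee d(v, w)$ in all cases and then handling the quotient.

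For the triangle inequality I would split by where $u, v, w$ lie relative to $X_j = A_j \setminus B$ and $B$. Triangles contained in a single $A_j$ inherit the inequality. A triangle with $u \in X_1$, $v \in X_2$, and $w = b_k \in B$ is direct: by selecting the $k$-th term in the defining meet, $d(u, v) \leq d(u, b_k) \vee d(v, b_k) = d(u, w) \vee d(v, w)$. The two substantive cases are when two of the points lie in the same $X_j$ and the third in the opposite factor. In the paradigmatic case $u, v \in X_1$ and $w \in X_2$, I would compute
$$d(u, w) \vee d(v, w) = \Bigl[\bigwedge_i (d(u, b_i) \vee d(w, b_i))\Bigr] \vee \Bigl[\bigwedge_j (d(v, b_j) \vee d(w, b_j))\Bigr] = \bigwedge_{i, j} \bigl(d(u, b_i) \vee d(w, b_i) \vee d(v, b_j) \vee d(w, b_j)\bigr),$$
where the second equality is the iterated distributive law, and this is the essential use of distributivity of $\Lambda$. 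Restricting to the diagonal $i = j$ gives an upper bound on this meet by $\bigwedge_i (d(u, b_i) \vee d(v, b_i)) \vee \bigwedge_i d(w, b_i)$ from above... more cleanly, each term with $i = j$ dominates $d(u, b_i) \vee d(v, b_i) \geq d(u, v)$ (the triangle inequality in $A_1$), so the whole meet is $\geq d(u, v)$. The remaining mixed case $u \in X_1$, $v \in X_2$, $w \in X_1$ is handled similarly: use $d(u, b_i) \leq d(u, w) \vee d(w, b_i)$ in $A_1$ to bound each factor of the meet defining $d(u, v)$ by a corresponding factor of the meet defining $d(u, w) \vee d(v, w)$ after another application of distributivity.

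The main obstacle is isolating exactly where distributivity enters and making sure the distributive identity is used correctly to pull meets through joins; without it, the meet of joins can exceed the join of meets and the triangle inequality fails, which is why the distributivity hypothesis is indispensable. Once the triangle inequality is in place, the canonical quotient is straightforward: if $d(x, y) = \bbzero$ for $x \in X_1, y \in X_2$ then $d(x, z) \leq d(x, y) \vee d(y, z) = d(y, z)$ and symmetrically for every $z$, so identifying $x$ with $y$ preserves all distances; and if also $d(x', y) = \bbzero$ for another $x' \in X_1$, then $d(x, x') \leq d(x, y) \vee d(x', y) = \bbzero$, so the identification relation is an equivalence, compatible with the metric. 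This delivers the amalgam inside $\KK$, establishing both the amalgamation property of $\KK$ and the fact that canonical amalgamation realizes it.
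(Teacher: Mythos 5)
The paper does not actually prove this proposition; it is quoted from \cite{Braun}*{Proposition 2.4}, so there is no in-paper argument to compare against. Your overall strategy --- verify that pre-canonical amalgamation yields a $\Lambda$-ultrametric space, isolating distributivity as the key hypothesis, then pass to the canonical quotient --- is the right one, and your quotient step is fine. But the verification of the triangle inequality has two concrete gaps.

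First, in the case $u \in X_1$, $v \in X_2$, $w = b_k \in B$, you check only $d(u,v) \leq d(u,w) \vee d(v,w)$, which indeed falls out of the definition. But a triangle carries three inequalities, and the other two, e.g.\ $d(u,b_k) \leq d(u,v) \vee d(v,b_k)$, are exactly the nontrivial lower-bound constraints on the new distance: they require distributing $d(v,b_k)$ into the meet defining $d(u,v)$ and then using $d(v,b_i) \vee d(v,b_k) \geq d(b_i,b_k)$ followed by $d(u,b_i) \vee d(b_i,b_k) \geq d(u,b_k)$. Calling this case ``direct'' hides the same work you do elsewhere. (Partitioning by $u,v \in A_j$ rather than $u,v \in X_j$ would have folded these into your substantive cases.) Second, and more seriously, in the case $u,v \in X_1$, $w \in X_2$ your conclusion does not follow from what you prove: you bound only the diagonal terms of $\bigwedge_{i,j}\bigl(d(u,b_i) \vee d(w,b_i) \vee d(v,b_j) \vee d(w,b_j)\bigr)$ from below, but the full meet is bounded above by the diagonal meet (as you yourself note), so bounding the diagonal from below says nothing about the off-diagonal terms. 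The fix is the same routing through the base: for $i \neq j$, use $d(w,b_i) \vee d(w,b_j) \geq d(b_i,b_j)$ in $A_2$ and then $d(u,b_i) \vee d(b_i,b_j) \geq d(u,b_j)$ in $A_1$, so every term dominates $d(u,b_j) \vee d(v,b_j) \geq d(u,v)$. With that one observation inserted in both places the proof closes up; as written, it does not establish the inequality.
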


\begin{definition}
Let $X$ be a set equipped with a binary relation $R$ and an equivalence relation $E$. We say that $E$ is a \textit{$R$-congruence} if $E(x, x')$ and $E(y, y')$ implies that $R(x, y)$ iff $R(x', y')$.
\end{definition}

\begin{proof}[Proof of Theorem \ref{theorem:amalg}]
Like linear orders, subquotient orders may be amalgamated independently, so we may assume $n=1$, and we will call the only subquotient order $<_E$, and call its bottom relation $E$ and top relation $F$.

We first introduce some notation. We define the relations $\preceq_E$ and $\arr$ on $\AA^*$-structures by 
\begin{enumerate}
\item $a \preceq_E b \Leftrightarrow (d(a, b) \leq E) \vee (a <_E b)$
\item $a \arr b \Leftrightarrow \exists x(a \preceq_E x \preceq_E b) \wedge (d(a, b) \not\leq E)$.
\end{enumerate}

We will make use of the following properties of $\preceq_E$ on $A^*$-structures.
\begin{enumerate}
\item If $a \preceq_E b <_E c$ or $a <_E b \preceq_E c$, then $a <_E c$.
\item $\preceq_E$ is transitive.
\item If $a \preceq_E b \preceq_E c$ and $d(a, c) \leq E$, then $d(a, b), d(a, c) \leq E$.
\item If $a \preceq_E b \preceq_E a$, then $d(a, b) \leq E$.   
\end{enumerate}

Property $(1)$ follows from the fact that $E$ is a $\preceq_E$-congruence. Properties $(2)$ and $(3)$ follow from $(1)$, and $(4)$ is a special case of $(3)$.

It suffices to show that $\AA^*$ contains solutions to all two-point amalgamation problems
$A_0^*\includedin A_1^*,A_2^*$, $A_i^*=A_0^*\union \{a_i\}$ for $i=1,2$.

Let $A$ be the extension of the free amalgam given by determining $d(a_1, a_2)$ by pre-canonical amalgamation. Either $<_E$ is already a subquotient order with bottom relation $E$ and top relation $F$, or we need to extend it to one by determining either $a_1 <_E a_2$ or $a_2 <_E a_1$. We break this into three cases.

\begin{claim}
Suppose $d(a_1, a_2) \leq E$. Then for $x \in A^*_0$, we have
$$a_1 <_E x \Longleftrightarrow a_2 <_E x$$
In particular, $<_E$ is a subquotient order on $A$ from $E$ to $F$.
\end{claim}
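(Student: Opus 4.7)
The plan is to exploit the meet-irreducibility of $E$ in the distributive lattice $\Lambda$ to produce a common ``witness'' $b \in A_0^*$ that is $E$-related to both $a_1$ and $a_2$; the subquotient-order property already enjoyed within each factor will then let us transfer $<_E$-comparisons between $a_1$ and $a_2$ through $b$.

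Concretely, I would first unfold the pre-canonical definition of the amalgamated distance:
$$d(a_1,a_2) \;=\; \bigwedge_{b \in A_0^*}\bigl(d(a_1,b)\vee d(a_2,b)\bigr).$$
Since $\Lambda$ is distributive and $E$ is meet-irreducible, $E$ is meet-prime in $\Lambda$. The hypothesis $d(a_1,a_2)\le E$ therefore forces the existence of some single $b\in A_0^*$ with $d(a_1,b)\vee d(a_2,b)\le E$; that is, $a_1\,E\,b$ inside $A_1^*$ and $a_2\,E\,b$ inside $A_2^*$.

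Now fix $x\in A_0^*$. Because $<_E$ is already a subquotient order from $E$ to $F$ in each factor, $E$ is a $<_E$-congruence on $A_1^*$ and on $A_2^*$. Hence $a_1 <_E x \Longleftrightarrow b <_E x$ in $A_1^*$, and $b <_E x \Longleftrightarrow a_2 <_E x$ in $A_2^*$; since $b,x\in A_0^*$ the middle relation has a single unambiguous meaning, and chaining the two equivalences yields $a_1 <_E x \Longleftrightarrow a_2 <_E x$, which is the claim.

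For the ``in particular'' part, the assumption $d(a_1,a_2)\le E$ means we add no new $<_E$-comparison between $a_1$ and $a_2$. To see that the resulting $<_E$ on $A$ really is a subquotient order from $E$ to $F$, I would note: (i) elements of a common $E$-class are $<_E$-incomparable by construction inside each factor plus the hypothesis on $a_1,a_2$; (ii) the ``comparable iff in the same $F$-class'' condition for any new pair $(a_i,x)$ with $x\in A_0^*$ follows from the triangle inequality together with $E\le F$, applied to $d(a_{3-i},x)\le d(a_i,x)\vee d(a_1,a_2)$; and (iii) the equivalence just proved is exactly the $E$-congruence property needed for the pulled-back partial order to be well-defined on $A/E$. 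The main (really the only) obstacle here is step two, where meet-primality of $E$ is essential; without that one could not produce the common base witness $b$ and the two factors' $<_E$-data to base elements could disagree.
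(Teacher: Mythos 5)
Your proof is correct and follows essentially the same route as the paper: both use meet-irreducibility of $E$ (which, as you note, gives meet-primality in a finite distributive lattice) to extract from the pre-canonical meet a single witness $b \in A_0^*$ with $d(a_1,b), d(a_2,b) \leq E$, and then chain the $<_E$-congruence property through $b$ in each factor. Your extra verification in the ``in particular'' step is harmless but partly aimed at a non-issue, since the pairs $(a_i, x)$ with $x \in A_0^*$ already lie in a factor and so already satisfy the subquotient-order conditions; the only genuinely new pair is $(a_1, a_2)$ itself.
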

\begin{claimproof}
As $E$ is meet-irreducible, if pre-canonical amalgamation gives $d(a_1, a_2) \leq E$, then there is a $y \in A_0^*$ such that $d(a_1, y), d(a_2, y) \leq E$.

By the fact that $E$ is a $<_E$-congruence, we get $a_1 <_E x \Longleftrightarrow y <_E x \Longleftrightarrow a_2 <_E x$. This proves the first part of the claim, and the second part follows immediately.
\end{claimproof}

We also note that if $d(a_1, a_2) = \bbzero$, then by the above claim $A_1 \cong A_2$, so we may amalgamate by identifying $a_1$ with $a_2$.

\begin{claim}
Suppose $d(a_1, a_2) \not\leq F$. Then $<_E$ is a subquotient order on $A$ from $E$ to $F$.
\end{claim}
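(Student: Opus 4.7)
The plan is to verify that the pre-canonical ultrametric amalgam $A$, equipped with the $<_E$-relation inherited from $A_1^*$ and $A_2^*$ and with no comparison added between $a_1$ and $a_2$, already constitutes a subquotient order from $E$ to $F$. The hypothesis $d(a_1, a_2) \not\leq F$ places $a_1$ and $a_2$ in distinct $F$-classes of $A$, so the defining clause of a subquotient order (``comparable iff in the same $F$-class'') both permits and requires $a_1, a_2$ to be $<_E$-incomparable; the real content of the claim is that transitivity, antisymmetry, and the $E$-congruence condition do not force us to add a new comparison.

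The central observation is that whenever $x \preceq_E y$ holds in some $A_i^*$ we have $d(x, y) \leq F$: either $d(x,y) \leq E \leq F$, or else $x <_E y$ puts $x$ and $y$ in a common $F$-class. With this in hand, the main step is to rule out any chain $a_1 \preceq_E b \preceq_E a_2$ with $b \in A_0^*$ (and symmetrically). Such a chain would give $d(a_1, b) \leq F$ and $d(b, a_2) \leq F$; by definition of pre-canonical amalgamation, $d(a_1,a_2) \leq d(a_1,b) \vee d(b,a_2) \leq F$, contradicting the hypothesis. So no such intermediary exists, and the transitivity of $\preceq_E$ inherited from the two factors never demands a comparison straddling $a_1$ and $a_2$.

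The remaining axioms are routine to check and follow the same pattern. The ``comparable iff in the same $F$-class'' condition holds within each $A_i^*$ by assumption, and for the pair $\{a_1, a_2\}$ both sides fail. Antisymmetry (property (4)) and the $E$-congruence property for $<_E$ are inherited from the factors via properties (1)--(4) of $\preceq_E$ recorded at the start of the proof, the only potential new violation again being a $\preceq_E$-chain through $A_0^*$ linking $a_1$ to $a_2$, which is blocked by the same triangle-inequality argument. The main (and essentially only) obstacle is isolated to that single application of the ultrametric inequality, and the hypothesis $d(a_1, a_2) \not\leq F$ is deployed precisely there.
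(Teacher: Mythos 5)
Your proof is correct and takes the same route as the paper, which simply asserts the claim is clear because $a_1$ and $a_2$ lie in distinct $F$-classes of $A$. Your elaboration correctly identifies the one point worth checking — that no chain $a_1 \preceq_E b \preceq_E a_2$ through $A_0^*$ can force a new comparison, blocked because pre-canonical amalgamation gives $d(a_1,a_2) \leq d(a_1,b) \vee d(b,a_2) \leq F$ — and this is exactly the detail the paper leaves implicit.
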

\begin{claimproof}
This is clear, as $a_1$ and $a_2$ lie in distinct $F$-classes in $A$.
\end{claimproof}

\begin{claim}
Suppose $d(a_1, a_2) \in (E, F]$. On $A$, define ${<^*_E} = {<_E \cup \arr}$. Then
\begin{enumerate}
\item $a_1 \arr a_2$ and $a_2 \arr a_1$ cannot both hold.
\item $E$ is a $<^*_E$-congruence.
\end{enumerate}
\end{claim}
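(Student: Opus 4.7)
The plan is to dispatch (1) quickly using the formal properties of $\preceq_E$ listed at the top of the proof of Lemma \ref{lemma:amalg}, and then reduce (2) to checking that $\arr$ descends to $E$-classes.

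For (1), if both $a_1 \arr a_2$ and $a_2 \arr a_1$ held, then unwinding the definition of $\arr$ would produce witnesses $x, y \in A$ with $a_1 \preceq_E x \preceq_E a_2$ and $a_2 \preceq_E y \preceq_E a_1$. Transitivity of $\preceq_E$ (property (2) at the top of the proof) would give $a_1 \preceq_E a_2 \preceq_E a_1$, and property (4) would force $d(a_1, a_2) \leq E$, contradicting the requirement $d(a_1, a_2) \not\leq E$ that is built into the definition of $\arr$.

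For (2), since $<_E$ is already an $E$-congruence by hypothesis and $<^*_E = <_E \cup \arr$, it suffices to check that $\arr$ respects $E$-classes. So I would suppose $d(a, a'), d(b, b') \leq E$ and $a \arr b$ with witness $x$ satisfying $a \preceq_E x \preceq_E b$, and show $a' \arr b'$. First, $d(a', b') \not\leq E$: from the ultrametric inequality $d(a, b) \leq d(a, a') \vee d(a', b') \vee d(b, b') \leq E \vee d(a', b')$, so if $d(a', b') \leq E$ one would get the contradiction $d(a, b) \leq E$. Second, the same $x$ serves as witness for $a' \preceq_E x \preceq_E b'$: the definition of $a \preceq_E x$ splits into the two cases $d(a, x) \leq E$ and $a <_E x$, and in either case moving from $a$ to $a'$ preserves the relation --- the first by the ultrametric inequality, the second by the $E$-congruence property of $<_E$. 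The symmetric argument handles $x \preceq_E b'$.

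I do not anticipate a serious obstacle. Part (1) is essentially immediate from the properties of $\preceq_E$ collected earlier, and part (2) reduces to two short verifications using absorption of $\leq E$-edges in the ultrametric together with the $E$-congruence property of $<_E$. The only point worth flagging is that the witness $x$ for $a \arr b$ can be reused unchanged for $a' \arr b'$, rather than needing to be replaced by some new element of $A$; this is what makes the transfer along $E$ painless.
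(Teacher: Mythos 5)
Your proof of part (1) has a genuine gap. The properties of $\preceq_E$ listed at the start of the proof of Lemma \ref{lemma:amalg} --- in particular transitivity --- are established for $\AA^*$-structures, i.e.\ structures in which $<_E$ is a genuine subquotient order. The free amalgam $A$ is not such a structure: $<_E$ is undefined between $a_1$ and $a_2$, and the failure of transitivity of $\preceq_E$ across exactly this pair is what the relation $\arr$ is introduced to repair. So the step ``transitivity gives $a_1 \preceq_E a_2$'' is not available: from $a_1 \preceq_E x \preceq_E a_2$ one cannot conclude $a_1 \preceq_E a_2$, since that would require $d(a_1,a_2) \leq E$ (false by hypothesis) or $a_1 <_E a_2$ (not defined in the free amalgam). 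Indeed, if this step were legitimate it would derive a contradiction from $a_1 \arr a_2$ alone, showing that $\arr$ never relates the two new points and trivializing the construction. The actual argument is longer: the witnesses $x_1, x_2$ necessarily lie in $A_0^*$, transitivity applied \emph{inside} the factors $A_2^*$ and $A_1^*$ (where it is valid) gives $x_1 \preceq_E x_2 \preceq_E x_1$ and hence $d(x_1,x_2) \leq E$; then the triangle inequality shows w.l.o.g.\ $d(a_1,x_2) \not\leq E$, so $x_2 <_E a_1 \preceq_E x_1$ forces $x_2 <_E x_1$, contradicting $d(x_1,x_2) \leq E$.

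Part (2) is closer to correct: since $d(a_1,a_2) \in (E,F]$ no $E$-class contains both new points, $<_E \subseteq \arr$ wherever both are defined, and reducing the congruence check to showing $\arr$ descends to $E$-classes is sound, as is your verification that $d(a',b') \not\leq E$. However, the assertion that ``the same $x$ serves as witness'' needs care: the witness for $a \arr b$ may be $a_1$ or $a_2$ itself (e.g.\ when $a <_E b$ inside one factor), and if $b'$ is then the other new point, ``$x \preceq_E b'$'' is again the undefined pair. The paper sidesteps this by checking only the one nontrivial instance, namely $a_1 \arr a_2 \Longleftrightarrow x <_E a_2$ for $x \in A_0^*$ with $d(a_1,x) \leq E$, and by rerouting the chain of $\preceq_E$-relations through elements of $A_0^*$.
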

\begin{claimproof}
\hspace{1 cm} \newline
$(1)$ Suppose $a_1 \arr a_2 \arr a_1$. Then there exist $x_1, x_2$ such that $a_1 \preceq_E x_1 \preceq_E a_2$, and $a_2 \preceq_E x_2 \preceq_E a_1$.

In particular, $x_1 \preceq_E x_2 \preceq_E x_1$, so $d(x_1, x_2) \leq E$. As $d(a_1, a_2) \not \leq E$, we may suppose $d(a_1, x_2) \not \leq E$. 

But $x_2 \preceq_E a_1$, so $x_2 <_E a_1 \preceq_E x_1$. Thus $x_2 <_E x_1$, which contradicts $x_2 \preceq_E x_1$.

$(2)$ We check that $E$ is a $<^*_E$-congruence. Since $d(a_1, a_2) \not \leq E$, it suffices without loss of generality to consider some $x \in A^*_0$ such that $d(a_1, x) \leq E$, $d(a_2, x) \in (E, F]$.

In this case, we claim
$$a_1 \arr a_2 \Longleftrightarrow x <_E a_2 \hspace{1 cm} a_2 \arr a_1 \Longleftrightarrow a_2 <_E x$$

The implications from right to left hold by the definition of $\arr$.

For the implication from left to right, we consider only the case $a_1 \arr a_2$, since the other is similar. By definition, there exists some $y$ such that $a_1 \preceq_E y \preceq_E a_2$. Then $x \preceq_E a_1 \preceq_E y$, so $x \preceq_E y$. Since $y \preceq_E a_2$, then $x \preceq_E a_2$. Since $d(x, a_2) \not\leq E$, we have $x <_E a_2$.
\end{claimproof}

Claims 63 and 64 dispose of the cases in which $d(a_1, a_2) \not\in (E, F]$. By Claim 65, if $d(a_1, a_2) \in (E, F]$ and $a_1 \arr a_2$, we may complete amalgam by determining $a_1 <_E a_2$, and vice versa if $a_2 \arr a_1$. If $d(a_1, a_2) \in (E, F]$ and neither $a_1 \arr a_2$ nor $a_2 \arr a_1$, we may complete the amalgam by arbitrarily determining either $a_1 <_E a_2$ or $a_2 <_E a_1$.
\end{proof}


\subsection*{Acknowledgements}
I am grateful to Gregory Cherlin for our many discussions concerning the material of this paper. I would also like to thank the anonymous reviewer for various helpful comments.



\begin{thebibliography}{99}

\bibitem{Braun} Samuel Braunfeld \newblock The Lattice of Definable Equivalence Relations in Homogeneous $ n $-Dimensional Permutation Structures \newblock {\em The Electronic Journal of Combinatorics}, 23(4):\#R44, 2016.

\bibitem{RamExp} Samuel Braunfeld \newblock Ramsey expansions of $\Lambda$-ultrametric spaces \newblock \arxiv{1710.01193}, 2016.

\bibitem{Cameron} Peter J. Cameron. \newblock Homogeneous permutations. \newblock {\em The Electronic Journal of Combinatorics}, 9(2):\#R2, 2002.

\end{thebibliography}
\end{document}